\newtheorem{theorem}{Theorem}[section]
\newtheorem{lem}[theorem]{Lemma}
\theoremstyle{Corollary}
\newtheorem{cor}[theorem]{Corollary}
\newtheorem{prop}[theorem]{Proposition}
\numberwithin{equation}{section}
\begin{document}

\title{
First eigenvalues of geometric operators under the Yamabe flow}

\author{Pak Tung Ho}
\address{Department of Mathematics, Sogang University, Seoul
121-742, Korea}

\email{ptho@sogang.ac.kr, paktungho@yahoo.com.hk}

\subjclass[2000]{Primary 53C44, 58C40; Secondary 35K55, 53C21, 58J35}

\date{August 12, 2016.}

\keywords{Yamabe flow; eigenvalue; CR manifold\\
This work was supported by the National Research Foundation of Korea (NRF) grant funded
by the Korea government (MEST) (No.201531021.01).}

\begin{abstract}
Suppose $(M,g_0)$ is a compact Riemannian manifold without boundary
of dimension $n\geq 3$.
Using the Yamabe flow, we obtain estimate for the first nonzero eigenvalue of
the Laplacian of $g_0$ with negative scalar curvature
in terms of the Yamabe metric in its conformal class.
On the other hand, we
prove that the first eigenvalue of some geometric operators
on a compact Riemannian manifold
is nondecreasing along the unnormalized Yamabe flow under suitable curvature assumption.
Similar results are obtained for manifolds with boundary and for CR manifold.
\end{abstract}

\maketitle

\section{Introduction}

Suppose $(M,g_0)$ is a compact $n$-dimensional manifold without boundary where $n\geq 3$.
As a generalization of Uniformization theorem, the Yamabe problem is
to find a metric $g$ conformal to $g_0$ such that its scalar curvature $R_g$ is constant.
The Yamabe problem was first studied by Yamabe in \cite{Yamabe}.
Note that if we write $g=u^{\frac{4}{n-2}}g_0$ where $u$ is a positive smooth function in $M$, then the scalar curvature $R_g$ of $g$ can be written
as
\begin{equation}\label{Yamabe}
R_g=u^{-\frac{n+2}{n-2}}\left(-\frac{4(n-1)}{n-2}\Delta_{g_0}u+R_{g_0}u\right).
\end{equation}
Therefore, the Yamabe problem is to solve (\ref{Yamabe}) such that $R_g$ is constant.
This was solved by Trudinger \cite{Trudinger}, Aubin \cite{Aubin0}, and
Schoen \cite{Schoen}. See the survey article \cite{Lee&Parker} of Lee and Parker
for more details.

Yamabe flow was introduced by Hamilton in \cite{Hamilton} to study the Yamabe problem, which is defined as the evolution of
the metric $g=g(t)$:
$$\frac{\partial}{\partial t}g=-(R_{g}-\overline{R}_{g})g\mbox{ for }t\geq 0,\hspace{2mm}g|_{t=0}=g_0,$$
where $\overline{R}_{g}=\displaystyle\frac{\int_MR_{g}dV_{g}}{\int_MdV_{g}}$ is the average of the scalar curvature
${R}_{g}$ of the Riemannian metric $g$.
In \cite{Chow}, Chow proved that the Yamabe flow approaches a metric of constant scalar
curvature provided that the initial metric is locally conformally flat and has positive
Ricci curvature. In \cite{Ye}, Ye proved the convergence of the Yamabe flow by assuming
only that the initial metric is locally conformally flat. Later, Schwetlick and Struwe
\cite{Schwetlick&Struwe} proved the convergence of the Yamabe flow for the case when $3\leq n\leq 5$
under the assumption that the initial metric has large energy. Finally Brendle \cite{Brendle4,Brendle5}
showed that the Yamabe flow converges to a metric of
constant scalar curvature by using positive mass theorem.
See also \cite{Barbosa,Daskalopoulos,Ma1,Ma2,Pablo} for results related to the Yamabe flow.

In this paper, we prove the following theorem by using the Yamabe flow.

\begin{theorem}\label{main}
Suppose $(M,g_0)$ is a compact Riemannian manifold of dimension $n\geq 3$ without boundary such that
$\displaystyle\max_MR_{g_0}< 0$,
and $g_Y$ is the Yamabe metric conformal to $g_0$
which has same volume as $g_0$.
Then the first nonzero eigenvalues of the Laplacian of $g_0$ and $g_Y$ satisfy
\begin{equation}\label{20}
e^{-c}\lambda_1(g_Y)\leq\lambda_1(g_0)\leq
e^{c}\lambda_1(g_Y)
\end{equation}
where $c=\displaystyle2(n-1)\left(\frac{\min_MR_{g_0}}{\max_MR_{g_0}}-1\right).$
\end{theorem}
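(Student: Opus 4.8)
The plan is to evolve $g_0$ by the given (volume-preserving) normalized Yamabe flow $\frac{\partial}{\partial t}g=-(R_g-\overline{R}_g)g$, $g(0)=g_0$, and to track $\lambda_1(t):=\lambda_1(g(t))$ as $t\to\infty$. Since $\max_M R_{g_0}<0$, the conformal class has negative Yamabe invariant, the constant-scalar-curvature (Yamabe) metric in the class is unique up to scale, and the normalized flow preserves volume and converges to the $g_Y$ of the same volume as $g_0$; in particular $\lambda_1(t)\to\lambda_1(g_Y)$. Estimate (\ref{20}) will then follow by integrating a bound on $\frac{d}{dt}\log\lambda_1(t)$ over $[0,\infty)$.

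First I would derive the variation of $\lambda_1$ along the flow. Writing $\psi=-(R_g-\overline{R}_g)$, the flow is the conformal variation $\dot g=\psi g$. Let $f=f(t)$ be a first eigenfunction normalized by $\int_M f^2\,dV_g=1$ and $\int_M f\,dV_g=0$. Because an eigenfunction is an unconstrained critical point of the Rayleigh quotient (its Euler--Lagrange equation is exactly $-\Delta_g f=\lambda_1 f$), the terms involving $\dot f$ drop out and only the explicit metric dependence survives. Using $\partial_t(dV_g)=\frac n2\psi\,dV_g$ and $\partial_t|\nabla f|_g^2=-\psi|\nabla f|_g^2$, this gives
\[
\frac{d}{dt}\log\lambda_1=\frac{n-2}{2\lambda_1}\int_M\psi|\nabla f|_g^2\,dV_g-\frac n2\int_M\psi f^2\,dV_g .
\]
Since $\tfrac1{\lambda_1}|\nabla f|_g^2\,dV_g$ and $f^2\,dV_g$ are probability measures, each integral is an average of $\psi$ lying in $[\min_M\psi,\max_M\psi]$, so
\[
\left|\frac{d}{dt}\log\lambda_1\right|\le(n-1)\max_M|\psi|=(n-1)\max_M|R_g-\overline{R}_g|.
\]

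Next I would control $\max_M|R_g-\overline{R}_g|$ by the oscillation $\omega(t):=\max_M R_g-\min_M R_g$, which I would show decays exponentially. The scalar curvature evolves by $\partial_t R=(n-1)\Delta R+R(R-\overline{R})$; the maximum principle applied at spatial extrema shows $\min_M R$ is nondecreasing and $\max_M R$ nonincreasing, so $\min_M R_{g_0}\le R_g\le\max_M R_{g_0}<0$ throughout (in particular $R_g$ stays negative and the flow is well posed). Combining the two extremal inequalities yields $\frac{d}{dt}\omega\le(\max_M R_g+\min_M R_g-\overline{R}_g)\,\omega\le(\max_M R_{g_0})\,\omega$, hence $\omega(t)\le(\max_M R_{g_0}-\min_M R_{g_0})\,e^{-|\max_M R_{g_0}|\,t}$. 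Since $|R_g-\overline{R}_g|\le\omega$, integrating the previous bound gives
\[
\int_0^\infty\left|\frac{d}{dt}\log\lambda_1\right|dt\le\frac{(n-1)(\max_M R_{g_0}-\min_M R_{g_0})}{|\max_M R_{g_0}|}=\frac c2\le c,
\]
so $|\log\lambda_1(g_0)-\log\lambda_1(g_Y)|\le c$, and exponentiating gives (\ref{20}).

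The main obstacle is the regularity of $t\mapsto\lambda_1(t)$: the first eigenvalue need not be simple, so it is only Lipschitz rather than smooth. I would handle this in the standard way, showing $\log\lambda_1$ is locally Lipschitz in $t$ (hence differentiable almost everywhere) and that the displayed derivative bound holds at every point of differentiability by using the instantaneous eigenfunction as a test function; integrating the a.e. derivative then suffices. A secondary point is to arrange that the convergence $g(t)\to g_Y$ is in a topology strong enough that $\lambda_1(g(t))\to\lambda_1(g_Y)$, which follows from the smooth convergence of the flow in the negative case together with the continuity of $\lambda_1$ under $C^2$ convergence of metrics.
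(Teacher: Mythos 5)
Your proposal is correct, and its skeleton coincides with the paper's: run the normalized Yamabe flow from $g_0$, identify the limit with $g_Y$ via Ye's convergence theorem and Kazdan--Warner uniqueness in the negative case, bound $\frac{d}{dt}\log\lambda_1$ through the variational formula $\frac{d}{dt}\log\lambda_1=\frac{n-2}{2\lambda_1}\int_M\psi|\nabla f|_g^2\,dV_g-\frac{n}{2}\int_M\psi f^2\,dV_g$ (which matches Propositions \ref{prop2} and \ref{prop21}), and integrate over $[0,\infty)$. However, your key quantitative input is genuinely different, and in fact sharper. The paper controls $\int_0^\infty\big(\max_MR_{g(s)}-\overline{R}_{g(s)}\big)ds$ and $\int_0^\infty\big(\overline{R}_{g(s)}-\min_MR_{g(s)}\big)ds$ by the integral-form maximum-principle Lemmas \ref{thm1} and \ref{thm2}, which compare $R_{g(t)}$ with $\overline{R}_{g_0}$ plus an accumulated integral term and then let $t\to\infty$; each integral is thereby bounded by $2\left(\frac{\min_MR_{g_0}}{\max_MR_{g_0}}-1\right)$. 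You instead prove pointwise exponential decay of the oscillation $\omega(t)=\max_MR_g-\min_MR_g$ via $\frac{d\omega}{dt}\le(a-b)(a+b-m)/(a-b)\cdot\omega=(a+b-m)\,\omega\le(\max_MR_{g_0})\,\omega$ (your algebra here is correct), which bounds each of the same integrals by $\frac{\min_MR_{g_0}}{\max_MR_{g_0}}-1$ --- a factor of $2$ better, the paper losing this factor when it crudely estimates $\overline{R}_{g(t)}-\overline{R}_{g_0}\ge\min_MR_{g_0}-\max_MR_{g_0}$ in \eqref{18}. Consequently you obtain the conclusion with constant $c/2$, which implies \eqref{20}. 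A second difference is the treatment of the nonsmoothness of $t\mapsto\lambda_1(t)$: the paper works with one-sided Dini derivatives and the two transported test-function constructions of Propositions \ref{AUX} and \ref{AUX1} (following Wu--Wang--Zheng), then invokes the Hagood--Thomson integration result, while you use local Lipschitz continuity, a.e.\ differentiability, and absolute continuity plus the fundamental theorem of calculus; both are valid, and yours is the more standard route. One point you should spell out: at a differentiability point $t_0$, the frozen eigenfunction $f(t_0)$ is not an admissible test function for the first \emph{nonzero} eigenvalue at nearby times, since $\int_Mf(t_0)\,dV_{g(t)}\neq0$ in general. Either subtract the mean (the resulting correction to the Rayleigh quotient is $O\big((t-t_0)^2\big)$, so it does not affect the derivative comparison), or transport the eigenfunction as the paper does via $h(t)=\left(u(t_0)/u(t)\right)^{\frac{2n}{n-2}}f(t_0)$, which keeps $h(t)\,dV_{g(t)}$ fixed as a measure and hence preserves both constraints exactly.
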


Recall that $g_Y$ is a Yamabe metric in the conformal class of $g_0$ if $g_Y$ is a
Riemannian metric conformal to $g_0$ such that its scalar curvature is constant.
Theorem \ref{main} can be applied to estimate the
first nonzero eigenvalue of a metric with negative scalar curvature
in terms of the Yamabe metric in its conformal class. See Theorem \ref{main1}.

The Yamabe problem was also studied in the context of manifolds with boundary.
Suppose $(M,g_0)$ is a compact $n$-dimensional manifold with smooth boundary $\partial M$ where $n\geq 3$.
The Yamabe problem is
to find a metric $g$ conformal to $g_0$ such that its scalar curvature $R_g$ is constant in $M$
and its mean curvature $H_g$ is zero on $\partial M$.
This has been studied by Escobar in \cite{Escobar1}.
See also \cite{Brendle&Chen,Han&Li}
for results in this direction.
In particular, the Yamabe flow was introduced on manifolds with boundary by Brendle in \cite{Brendle1}:
given a metric $g_0$ with vanishing mean curvature on the boundary, i.e. $H_{g_0}=0$ on $\partial M$,
we can define the Yamabe flow as follows:
$$
\frac{\partial}{\partial t}g=-(R_g-\overline{R}_g)g\mbox{ in }M\mbox{ and }H_g=0 \hbox{ on }\partial M
\mbox{ for }t\geq 0,\hspace{2mm}g|_{t=0}=g_0.
$$
Using the Yamabe flow, we obtain estimate for
the first nonzero eigenvalue of
the Laplacian  with Dirichlet boundary condition
when $(M,g_0)$ has negative scalar curvature.
See Theorem \ref{mainB}.

One can consider the following CR analogue of the Yamabe problem,
the CR Yamabe problem. Suppose  $(M,\theta_0)$ is a compact strictly
pseudoconvex CR manifold of real dimension $2n+1$ with a contact
form $\theta_0$.
The CR Yamabe problem
is to find a contact form $\theta$ conformal to $\theta_0$ such that
its Webster scalar curvature is constant. Jerison and Lee
\cite{Jerison&Lee1,Jerison&Lee2,Jerison&Lee3}
solved the CR Yamabe problem when $n\geq 2$ and
$M$ is not locally CR equivalent to the sphere.
The remaining cases, namely when $n=1$ or $M$ is locally CR
equivalent to the sphere, were studied
respectively by Gamara and Yacoub in \cite{Gamara1} and by Gamara in
\cite{Gamara2}.
See also the recent work of Cheng-Chiu-Yang in \cite{Cheng&Chiu&Yang} and
Cheng-Malchiodi-Yang in \cite{Cheng&Malchiodi&Yang}.

The CR Yamabe flow was introduced to study the CR Yamabe problem, which is defined as:
$$
\frac{\partial}{\partial t}\theta=-(R_\theta-\overline{R}_\theta)\theta\mbox{ for }t\geq 0,\hspace{2mm}\theta|_{t=0}=\theta_0,
$$
where $R_\theta$ is the Webster scalar curvature of the contact form $\theta$, and $\overline{R}_\theta$ is the average of the Webster scalar curvature.
See \cite{Chang,Chang&Chiu&Cheng,Ho1,Ho2,Ho4,Ho5,Zhang}
for results related to the CR Yamabe flow.
Using the CR Yamabe flow, we obtain estimate for
the first nonzero eigenvalue of
the sub-Laplacian of a contact form $\theta_0$ with negative Webster scalar curvature.
See Theorem \ref{mainA}.

In another direction, we consider eigenvalues of some
geometric operators under the unnormalized Yamabe flow.
In recent years,
there has been increasing attentions on the study of eigenvalues of geometric operators under different kinds of geometric flow.
In \cite{Perelman}, Perelman proved that the first eigenvalue of $-4\Delta_g+R_g$
is nondecreasing along the  Ricci flow
$$\frac{\partial}{\partial t}g=-2Ric_g,$$
where $Ric_g$ and $R_g$ are the Ricci curvature and scalar curvature of $g$ respectively.
As an application, he showed that there is no nontrivial steady or expanding breathers on closed manifolds.
In \cite{Cao1}, Cao showed that the eigenvalues of $-\Delta_g+\displaystyle\frac{1}{2}R_g$ on
Riemannian manifolds with nonnegative curvature operator are nondecreasing under the Ricci flow.
See \cite{Li,Li1,Ma,Zhao}
for related results.

In \cite{Cao}, Cao proved that the first eigenvalue of $-\Delta_g+aR_g$ on a closed manifold $M$, where $a>1/4$, is nondecreasing along
the Ricci flow. In \cite{Cao&Hou&Ling}, Cao-Hou-Ling showed that the first eigenvalue of
$-\Delta_g+aR_g$, where $0<a<1/2$, on a closed surface with nonnegative scalar curvature is nondecreasing
under the Ricci flow.
Combining these results, we have the following: (see Theorem 2.2 in \cite{Cao&Hou&Ling})
\begin{theorem}\label{Mmain1}
On a closed surface with nonnegative scalar curvature, for all $a>0$, the first eigenvalue
of $-\Delta_g+aR_g$ is nondecreasing under the Ricci flow.
\end{theorem}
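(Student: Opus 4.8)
The plan is to deduce the statement by combining the two facts already recalled above. By \cite{Cao}, on any closed manifold the first eigenvalue of $-\Delta_g+aR_g$ is nondecreasing along the Ricci flow whenever $a>1/4$; in particular this holds on a closed surface. By \cite{Cao&Hou&Ling}, on a closed surface with nonnegative scalar curvature the same monotonicity holds for $0<a<1/2$. Since every $a>0$ lies in $(0,1/2)\cup(1/4,\infty)=(0,\infty)$, the two ranges together exhaust all positive $a$, and the theorem follows at once. In this form there is no real obstacle; the content resides entirely in the two cited results.

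To locate where the numerical constants and the curvature hypothesis enter---and to make the surface case self-contained---I would carry out the standard variational computation. Let $\lambda(t)$ be the first eigenvalue of $L_t=-\Delta_{g(t)}+aR_{g(t)}$ with normalized eigenfunction $f$, so that $-\Delta f+aRf=\lambda f$ and $\int_M f^2\,dV=1$. Following Perelman \cite{Perelman}, I would differentiate $\int_M(|\nabla f|^2+aRf^2)\,dV$ along a smooth family of normalized functions that is stationary at the chosen time, so that the first-order variation in $f$ may be ignored. On a surface the Ricci flow reads $\partial_t g=-Rg$, and one uses $\partial_t\,dV=-R\,dV$, $\partial_t g^{ij}=Rg^{ij}$, $R_{ij}=\tfrac12 Rg_{ij}$, together with the evolution $\partial_t R=\Delta R+R^2$.

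Substituting these, the terms involving $\dot f$ reduce, via the eigenvalue equation and the differentiated normalization, to $\lambda\int_M Rf^2\,dV$, while the term coming from $\partial_t g^{ij}$ cancels the gradient part of the volume term. After integrating the remaining $\int_M(\Delta R)f^2\,dV$ by parts and using $\Delta f=aRf-\lambda f$, I expect to arrive at
\begin{equation*}
\frac{d\lambda}{dt}=(1-2a)\lambda\int_M Rf^2\,dV+2a^2\int_M R^2f^2\,dV+2a\int_M R|\nabla f|^2\,dV.
\end{equation*}
When $R\ge 0$ and $a>0$ one has $\lambda\ge 0$ and the last two integrals are nonnegative; if moreover $0<a\le 1/2$ the factor $1-2a$ is nonnegative, so every term is nonnegative and $\lambda'(t)\ge 0$. \textbf{The main obstacle is the range $a>1/2$}: there the coefficient $1-2a$ is negative, and the first term cannot obviously be absorbed by the other two without additional information, which is precisely why one falls back on Cao's result for $a>1/4$. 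The overlap of $(1/4,\infty)$ with $(0,1/2)$ guarantees that the two arguments together cover all $a>0$.
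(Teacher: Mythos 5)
Your proposal is correct and takes essentially the same route as the paper: Theorem \ref{Mmain1} is presented there exactly as the combination of Cao's result (monotonicity for $a>1/4$ on closed manifolds) with Cao-Hou-Ling's result (monotonicity for $0<a<1/2$ on closed surfaces with nonnegative scalar curvature), the union $(0,1/2)\cup(1/4,\infty)$ covering all $a>0$. Your supplementary variational formula is also consistent with the paper, being precisely Proposition \ref{propC1} specialized to $n=2$ (where the unnormalized Yamabe flow coincides with the Ricci flow), and your observation that the direct argument only handles $0<a\le 1/2$ mirrors why the paper, too, falls back on the cited results rather than a single computation.
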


Note that when the dimension $n=2$, we have $Ric_g=\displaystyle\frac{1}{2}R_gg$. Therefore,
the Ricci flow becomes the unnormalized Yamabe flow
$$\frac{\partial}{\partial t}g=-R_g g.
$$
We would like to generalize Theorem \ref{Mmain1} to higher dimension by considering the  unnormalized Yamabe flow.
In particular, we prove the following:

\begin{theorem}\label{Mmain}
Along the unnormalized Yamabe flow,
the first eigenvalue of $-\Delta_g+aR_g$ is nondecreasing\\
(i)
if $0\leq a<\displaystyle\frac{n-2}{4(n-1)}$ and
$\displaystyle\min_MR_g\geq\frac{n-2}{n}\max_MR_g\geq 0$;\\
(ii) if $a\geq\displaystyle\frac{n-2}{4(n-1)}$ and
$\displaystyle\min_MR_g\geq 0$.
\end{theorem}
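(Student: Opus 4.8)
The plan is to run the standard Rayleigh--quotient monotonicity scheme for the first eigenvalue $\lambda(t)$ of $L_g=-\Delta_g+aR_g$. Fix a time $t_0$, take the (positive) first eigenfunction $f$ normalized by $\int_M f^2\,dV_g=1$, and extend it to a smooth family $f(t)$ preserving the normalization $\int_M f(t)^2\,dV_{g(t)}=1$. Setting $G(t):=\int_M\bigl(|\nabla f|_g^2+aR_gf^2\bigr)\,dV_g$, one has $\lambda(t)\le G(t)$ with equality at $t_0$; anchoring the family at $t_0$, comparing with $\lambda(t)$ as $t\uparrow t_0$, and using that $\lambda$ is Lipschitz, a standard Dini--derivative argument reduces the whole theorem to the pointwise inequality $G'(t_0)\ge0$ at the eigenfunction, for every $t_0$.

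First I would record the evolution equations along $\partial_t g=-R_g g$, namely $\partial_t g^{ij}=R_gg^{ij}$, $\partial_t\,dV_g=-\tfrac n2R_g\,dV_g$, and $\partial_tR_g=(n-1)\Delta_gR_g+R_g^2$. Differentiating $G$ and integrating by parts, the terms containing $\dot f$ collapse: using the eigenvalue equation $-\Delta f+aR_gf=\lambda f$ together with the differentiated normalization $\int_M f\dot f\,dV_g=\tfrac n4\int_M R_gf^2\,dV_g$, they reduce to $\tfrac n2\lambda\int_M R_gf^2\,dV_g$, so that $G'(t_0)$ is independent of the chosen extension. Substituting $\Delta f=aR_gf-\lambda f$ in the curvature terms, I expect the clean identity
\[
G'(t_0)=c_1\int_M R_g\bigl(|\nabla f|_g^2+aR_gf^2\bigr)\,dV_g+(1-c_1)\lambda\int_M R_gf^2\,dV_g,\qquad c_1:=1-\frac n2+2a(n-1),
\]
in which the threshold $a=\frac{n-2}{4(n-1)}$ is precisely where $c_1$ changes sign. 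Abbreviate $R_{\min}=\min_MR_g$, $R_{\max}=\max_MR_g$, and $\overline R:=\int_M R_gf^2\,dV_g$, the mean of $R_g$ against $f^2\,dV_g$.

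For part (i), where $0\le a<\frac{n-2}{4(n-1)}$ gives $c_1<0$, I would bound the first integral above by $R_{\max}\lambda$ (using $\int R_g|\nabla f|^2\le R_{\max}\int|\nabla f|^2$ and $\int R_g^2f^2\le R_{\max}\overline R$, then recombining via $\lambda=\int|\nabla f|^2+a\overline R$) and bound $\overline R$ below by $R_{\min}$, obtaining $G'(t_0)\ge\lambda\,[\,c_1R_{\max}+(1-c_1)R_{\min}\,]$. A short computation shows $\frac{|c_1|}{1+|c_1|}\le\frac{n-2}{n}$ throughout the range $0\le a<\frac{n-2}{4(n-1)}$, with equality at $a=0$, so the hypothesis $\min_MR_g\ge\frac{n-2}{n}\max_MR_g\ge0$ forces the bracket to be nonnegative; the curvature gap condition is therefore sharp at $a=0$, i.e.\ for the bare Laplacian.

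For part (ii), where $c_1\ge0$ and $R_g\ge0$, I would substitute $\lambda=\int|\nabla f|^2+a\overline R$ to rewrite
\[
G'(t_0)=\int_M\bigl[c_1R_g+(1-c_1)\overline R\bigr]|\nabla f|_g^2\,dV_g+a\Bigl[c_1\int_M R_g^2f^2\,dV_g+(1-c_1)\overline R^{\,2}\Bigr].
\]
The Cauchy--Schwarz inequality $\int_M R_g^2f^2\,dV_g\ge\overline R^{\,2}$ makes the second bracket $\ge\overline R^{\,2}\ge0$ for every $c_1\ge0$, and when $c_1\le1$ (that is $a\le\frac n{4(n-1)}$) the first integrand is pointwise nonnegative, closing that sub-range at once. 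The \emph{main obstacle} is the remaining sub-range $c_1>1$: there the gradient integral $c_1\int R_g|\nabla f|^2-(c_1-1)\overline R\int|\nabla f|^2$ is no longer pointwise signed, and it cannot be closed with $R_{\min}$ alone since no upper bound on $R_g$ is assumed. I expect to handle this by exploiting the variational structure rather than pointwise estimates, through the identity
\[
\int_M(\Delta_gR_g)f^2\,dV_g=2\Bigl(\int_M R_g|\nabla f|_g^2\,dV_g+a\int_M R_g^2f^2\,dV_g-\lambda\int_M R_gf^2\,dV_g\Bigr),
\]
which recasts the required inequality as $\int_M(\Delta_gR_g)f^2\,dV_g\ge-\frac2{c_1}\lambda\int_M R_gf^2\,dV_g$. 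The positivity $R_g\ge0$ together with the tendency of the ground state to concentrate where $R_g$ is smallest is what should supply the needed control; coupling the eigenvalue equation with $R_g\ge0$ here is the delicate heart of the argument.
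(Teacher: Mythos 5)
Your overall scheme and your key identity coincide with the paper's: your $c_1$ is exactly $2(n-1)a-\frac{n-2}{2}$, and your formula for $G'(t_0)$ is precisely Proposition \ref{propC1},
\begin{equation*}
\frac{d\lambda_1}{dt}=\Big(2(n-1)a-\tfrac{n-2}{2}\Big)\int_MR_g\big(|\nabla_{g} f|^2_{g}+aR_gf^2\big)\,dV_{g}
-\Big(2(n-1)a-\tfrac{n}{2}\Big)\lambda_1\int_MR_gf^2\,dV_g,
\end{equation*}
the only cosmetic difference being that the paper invokes eigenvalue perturbation theory to get a $C^1$ family and differentiates directly, while you anchor a Rayleigh quotient at $t_0$ and use Dini derivatives (both are fine here). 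Your part (i) is correct and is in substance Proposition \ref{propC2} (bound the first integral by $\max_MR_g$, the second by $\min_MR_g$; your condition $|c_1|/(1+|c_1|)\le(n-2)/n$ is the same arithmetic), and your treatment of $0\le c_1\le 1$, i.e.\ $\frac{n-2}{4(n-1)}\le a\le\frac{n}{4(n-1)}$, via the pointwise decomposition plus Cauchy--Schwarz is correct and corresponds to Proposition \ref{propC4}, where the paper instead exploits the exact cancellation $\big(2(n-1)a-\frac{n-2}{2}\big)+\big(\frac{n}{2}-2(n-1)a\big)=1$.

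The genuine gap is the remaining range $c_1>1$, i.e.\ $a>\frac{n}{4(n-1)}$, which Theorem \ref{Mmain}(ii) includes. Your identity $\int_M(\Delta_gR_g)f^2\,dV_g=2\big(\int_MR_g|\nabla f|^2_g\,dV_g+a\int_MR_g^2f^2\,dV_g-\lambda\int_MR_gf^2\,dV_g\big)$ and the resulting reformulation $\int_M(\Delta_gR_g)f^2\,dV_g\ge-\frac{2}{c_1}\lambda\int_MR_gf^2\,dV_g$ are algebraically correct, but ``the ground state concentrates where $R_g$ is smallest'' is a heuristic, not a proof: nothing in your proposal actually bounds $\int_M(\Delta_gR_g)f^2\,dV_g$ from below, so part (ii) remains unproved for $a>\frac{n}{4(n-1)}$. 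For comparison: the paper's own proof of this range, Proposition \ref{propC3}, does exactly what you claim cannot be done---it bounds the first integral below by $\min_MR_g$ and the second above by $\max_MR_g$---and then asserts that the resulting quantity $\lambda_1\big[\big(2(n-1)a-\frac{n-2}{2}\big)\min_MR_g-\big(2(n-1)a-\frac{n}{2}\big)\max_MR_g\big]$ \emph{equals} $\lambda_1\big(\frac{n}{2}\max_MR_g-\frac{n-2}{2}\min_MR_g\big)$. That ``equality'' silently discards the term $-2(n-1)a\big(\max_MR_g-\min_MR_g\big)\lambda_1\le 0$, and the bound actually obtained is negative, e.g.\ when $\min_MR_g=0<\max_MR_g$ and $a$ is large. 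So your diagnosis of the obstruction is sound---pointwise $\min$/$\max$ bounds cannot close this case---but your proposal does not close it either; carrying out your $\int_M(\Delta_gR_g)f^2$ route (say, via concentration of $f^2$ near minima of $R_g$, where $\Delta_gR_g\ge0$) would be genuinely new work beyond both the proposal and the paper's own argument.
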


Corresponding results are also obtained for manifolds with boundary and for CR manifolds. See Theorem \ref{Mmain3},
Theorem \ref{Mmain4} and
Theorem \ref{Mmain2}. Note that Theorem \ref{Mmain} was proved in \cite{Ho2}
for the cases when $a=0$ and $a=\displaystyle \frac{n-2}{4(n-1)}$ (c.f. Theorem 6.1
and Theorem 6.2 in \cite{Ho2}).
Note also that the condition $\displaystyle\min_MR_g\geq 0$ is preserved by the Yamabe flow (see Proposition 6.2
in \cite{Ho2}). Therefore, Theorem \ref{Mmain} implies that following corollary, which
can be considered as a generalization of  Theorem \ref{Mmain1} in higher dimensions.

\begin{cor}
On a closed Riemannian manifold with nonnegative scalar curvature,
the first eigenvalue of $-\Delta_g+aR_g$, where $a\geq\displaystyle\frac{n-2}{4(n-1)}$, is nondecreasing
along the unnormalized Yamabe flow.
\end{cor}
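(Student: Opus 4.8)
The plan is to obtain the corollary directly from part (ii) of Theorem~\ref{Mmain}, the only additional ingredient being that the curvature hypothesis $\min_M R_g \ge 0$ of that part is preserved along the unnormalized Yamabe flow. Since Theorem~\ref{Mmain}(ii) is really an instantaneous statement---it controls the time derivative of the eigenvalue at any moment at which $\min_M R_g \ge 0$ holds---the whole point is to verify that this curvature condition persists for all $t \ge 0$, after which the monotonicity is inherited pointwise in time and integrated.

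First I would recall the evolution of the scalar curvature under $\partial_t g = -R_g g$, namely $\partial_t R_g = (n-1)\Delta_g R_g + R_g^2$. Evaluating this at a point where $R_g$ attains its spatial minimum, the Laplacian term is nonnegative, so in the barrier sense $\frac{d}{dt}\min_M R_g \ge (\min_M R_g)^2 \ge 0$; the scalar maximum principle then shows that $\min_M R_{g_0} \ge 0$ forces $\min_M R_{g(t)} \ge 0$ throughout the maximal interval of existence. This is exactly the preservation statement quoted in the introduction (Proposition~6.2 of \cite{Ho2}), which I would cite rather than reprove.

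With the curvature condition in hand, the assembly is immediate: by hypothesis $a \ge \frac{n-2}{4(n-1)}$, and we have just arranged $\min_M R_{g(t)} \ge 0$ for every $t$, so the hypotheses of Theorem~\ref{Mmain}(ii) are met at each time along the flow. Writing $\lambda_1(t)$ for the first eigenvalue of $-\Delta_{g(t)} + a R_{g(t)}$, Theorem~\ref{Mmain}(ii) gives $\frac{d}{dt}\lambda_1(t) \ge 0$ at each such $t$, and integrating in $t$ shows that $\lambda_1$ is nondecreasing along the unnormalized Yamabe flow.

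I expect no genuine obstacle, since the corollary is engineered as a clean consequence of the main theorem; the substance lies entirely in Theorem~\ref{Mmain}(ii) and in the preservation result. The only point deserving a word of care is the passage from the instantaneous differential inequality to the global-in-time conclusion, which is bridged precisely by the invariance of $\{\min_M R_g \ge 0\}$. I note also that differentiating $\lambda_1(t)$ is legitimate because the bottom eigenvalue of $-\Delta_g + a R_g$ on a connected closed manifold is simple, and hence depends smoothly on $t$; this smoothness is already exploited in establishing Theorem~\ref{Mmain}.
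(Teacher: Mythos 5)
Your proposal is correct and follows essentially the same route as the paper: the paper derives the corollary by combining Theorem \ref{Mmain}(ii) with the fact that $\min_M R_g \geq 0$ is preserved along the unnormalized Yamabe flow (citing Proposition 6.2 of \cite{Ho2}), exactly as you do. Your added remarks on the maximum-principle mechanism behind the preservation and on the simplicity of the ground state (justifying differentiability of $\lambda_1(t)$) are sound but not needed beyond what the paper already invokes.
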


We would like to point out the following main difference between the proof of Theorem \ref{main}
and Theorem \ref{Mmain}.
By the eigenvalue perturbation theory (c.f. \cite{Cao,Kato,Kleiner&Lott,Reed&Simon}), we know that
there is a family of
the first eigenvalue and its corresponding eigenfunction of the geometric operator,
which is $C^1$ in $t$ along the flow in
Theorem \ref{Mmain}.
However, in Theorem \ref{main},
we only know that the first nonzero eigenvalue (that is, the second eigenvalue since the
first eigenvalue is always zero) of the Laplacian
is Lipschitz continuous in $t$.
But we are able to overcome the difficulty by following the ideas
of Wu-Wang-Zheng in
\cite{Wu&Wang&Zheng}.

\section{The Yamabe flow on manifolds without boundary}

In this section, we let $(M,g_0)$ be a compact Riemannian manifold of dimension $n\geq 3$ without boundary.
We consider the Yamabe flow, which is defined as
\begin{equation}\label{1}
\frac{\partial}{\partial t}g=-(R_g-\overline{R}_g)g\mbox{ for }t\geq 0,\hspace{2mm}g|_{t=0}=g_0.
\end{equation}
Here $R_g$ is the scalar curvature of $g$ and $\overline{R}_g$ is the average of the scalar curvature
given by
\begin{equation}\label{0}
\overline{R}_g=\frac{\int_MR_gdV_g}{\int_MdV_g},
\end{equation}
where $dV_g$ is the volume form of $g$.
Since the Yamabe flow preserves the conformal structure, we can write $g=u^{\frac{4}{n-2}}g_0$
for some positive function $u$, where $u$ satisfies the following evolution equation:
\begin{equation}\label{2}
\frac{\partial u}{\partial t}=-\frac{n-2}{4}(R_g-\overline{R}_g)u\mbox{ for }t\geq 0,\hspace{2mm}u|_{t=0}=1.
\end{equation}
Hence, the volume form $dV_g$ of $g$ satisfies
\begin{equation}\label{3}
\frac{\partial}{\partial t}(dV_g)=\frac{\partial}{\partial t}(u^{\frac{2n}{n-2}}dV_{g_0})=
\frac{2n}{n-2}u^{\frac{2n}{n-2}-1}\frac{\partial u}{\partial t}dV_{g_0}=-\frac{n}{2}(R_g-\overline{R}_g)dV_g.
\end{equation}
On the other hand, the scalar curvature $R_g$ of $g$ satisfies the following evolution equation:
(see \cite{Brendle4})
\begin{equation}\label{4}
\frac{\partial}{\partial t}R_g=(n-1)\Delta_gR_g+R_g(R_g-\overline{R}_g).
\end{equation}

We have the following proposition, which is  inspired by  Proposition 3.1 in \cite{Wu&Wang&Zheng}.

\begin{prop}\label{AUX}
Let $g=g(t)$ be the solution of the
the Yamabe flow $(\ref{1})$
and $\lambda_1(t)$ be the corresponding first nonzero eigenvalue of the Laplacian.
Then for any $t_2\geq t_1$,
there exists a $C^\infty$ function $f$ on $M\times [t_1,t_2]$
satisfying
\begin{equation}\label{6}
\int_{M}f^2dV_{g}=1\hspace{2mm}\mbox{
and }\hspace{2mm}\displaystyle\int_{M}fdV_{g}=0\mbox{ for all }t\in [t_1,t_2],
\end{equation}
and
$$\lambda_1(t_2)\geq \lambda_1(t_1)
-\frac{n-2}{2}\int_{t_1}^{t_2}\int_M(R_g-\overline{R}_g)|\nabla_{g} f|^2_{g}dV_{g}dt-2\int_{t_1}^{t_2}\int_{M}\frac{\partial f}{\partial t}\Delta_{g} fdV_{g}dt$$
such that at time $t_2$, $f(t_2)$ is the corresponding eigenfunction of
$\lambda_1(t_2)$.
\end{prop}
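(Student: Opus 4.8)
The plan is to reduce the inequality to a one-variable calculus argument along a suitably constructed family of test functions, exploiting the variational characterization
\[
\lambda_1(t)=\inf\left\{\int_M\abs{\nabla_g\phi}^2\,dV_g:\ \int_M\phi^2\,dV_g=1,\ \int_M\phi\,dV_g=0\right\}.
\]
The central idea, following Wu-Wang-Zheng, is that although the eigenfunction need not depend smoothly on $t$, we are free to \emph{fix} the eigenfunction at the single time $t_2$ and manufacture a smooth admissible family by rescaling. Concretely, I would let $\varphi$ denote the eigenfunction of $\lambda_1(t_2)$ normalized so that $\int_M\varphi^2\,dV_{g(t_2)}=1$ and $\int_M\varphi\,dV_{g(t_2)}=0$, and then set
\[
f(t)=\alpha(t)\bigl(\varphi-\beta(t)\bigr),\qquad
\beta(t)=\frac{\int_M\varphi\,dV_g}{\int_M dV_g},\qquad
\alpha(t)=\left(\int_M(\varphi-\beta(t))^2\,dV_g\right)^{-1/2}.
\]

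The next step is to verify that this $f$ has all the required properties. The two normalizations (\ref{6}) hold by construction, and at $t=t_2$ the two defining integrals of $\varphi$ force $\beta(t_2)=0$ and $\alpha(t_2)=1$, so $f(t_2)=\varphi$ is the eigenfunction of $\lambda_1(t_2)$. Smoothness of $f$ on $M\times[t_1,t_2]$ follows from smoothness of the Yamabe flow together with the observation that $\varphi-\beta(t)$ is never identically constant (since $\varphi$ is a nonconstant first eigenfunction), so that $\int_M(\varphi-\beta(t))^2\,dV_g$ stays strictly positive and $\alpha(t)$ is a smooth positive function.

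With $f$ in hand, I would introduce $\mu(t)=\int_M\abs{\nabla_g f}^2\,dV_g$. Since $f(t)$ is an admissible test function for every $t$, the variational characterization gives $\mu(t)\geq\lambda_1(t)$, with equality at $t_2$ because there $f$ is the genuine eigenfunction. The heart of the computation is then to differentiate $\mu$. Using $\partial_t g^{ij}=(R_g-\overline{R}_g)g^{ij}$, the volume evolution (\ref{3}), and integration by parts on the closed manifold $M$, one obtains
\[
\mu'(t)=-\frac{n-2}{2}\int_M(R_g-\overline{R}_g)\abs{\nabla_g f}^2\,dV_g-2\int_M\frac{\partial f}{\partial t}\,\Delta_g f\,dV_g.
\]
Integrating from $t_1$ to $t_2$ and using $\mu(t_2)=\lambda_1(t_2)$ together with $\mu(t_1)\geq\lambda_1(t_1)$ then yields exactly the claimed inequality, via
\[
\lambda_1(t_2)=\mu(t_2)=\mu(t_1)+\int_{t_1}^{t_2}\mu'(t)\,dt\geq\lambda_1(t_1)+\int_{t_1}^{t_2}\mu'(t)\,dt.
\]

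The only genuinely delicate point is the construction of the smooth admissible family: this is precisely where one sidesteps the fact that $\lambda_1(t)$ is merely Lipschitz (being the second eigenvalue, it may coincide with higher eigenvalues and lose differentiability). The rescaling device resolves this by requiring smoothness only of the rescaling factors $\alpha,\beta$, which are manifestly smooth, rather than of the eigenfunction itself. Everything else is a routine evolution-equation computation; one should simply take care that the two constraint integrals remain compatible, i.e. that $\alpha(t)$ stays finite, which is guaranteed by the nonconstancy of $\varphi$.
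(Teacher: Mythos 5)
Your proof is correct, and its overall strategy is exactly the paper's (both implement the Wu--Wang--Zheng device): fix the eigenfunction $\varphi$ at time $t_2$, build from it a smooth family that is admissible for the Rayleigh quotient at every $t\in[t_1,t_2]$ and coincides with $\varphi$ at $t_2$, differentiate the Dirichlet energy, and conclude via $\mu(t_2)=\lambda_1(t_2)$, $\mu(t_1)\geq\lambda_1(t_1)$ and the fundamental theorem of calculus. Your derivative formula for $\mu$ checks out: the coefficient $1-\tfrac{n}{2}=-\tfrac{n-2}{2}$ comes from $\partial_t g^{ij}=(R_g-\overline{R}_g)g^{ij}$ combined with the volume evolution, matching the paper's (\ref{3.13A}). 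The one place you genuinely diverge is the construction of the family: the paper sets $f(t)$ proportional to $\bigl(u(t_2)/u(t)\bigr)^{\frac{2n}{n-2}}\varphi$, which has the feature that $h(t)\,dV_{g(t)}=\varphi\,dV_{g(t_2)}$ is a fixed measure, so the zero-mean constraint holds automatically and only the $L^2$ normalization needs adjusting; you instead enforce the zero-mean constraint by subtracting the evolving mean $\beta(t)$ and then renormalizing. Both are valid, and your verification of the needed nondegeneracy ($\varphi$ nonconstant, hence $\alpha(t)$ finite and smooth) is the right point to check. Your construction is arguably more robust in that it never references the conformal factor $u$ and would work verbatim for any smooth family of metrics, whereas the paper's rescaling exploits the conformal structure to get one of the two constraints for free; on the other hand, the paper's choice makes the admissibility verification trivial. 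Either family feeds into Proposition \ref{prop2} equally well, since the identity (\ref{7}) used there follows from the normalization $\int_M f^2\,dV_g=1$ alone.
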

\begin{proof}
At time $t_2$, we  let $f_2 = f(t_2)$ be the eigenfunction for the
first nonzero eigenvalue
$\lambda_1(t_2)$ of $g(t_2)$.
We define the following smooth function on $M$:
$$h(t)=\left(\frac{u(t_2)}{u(t)}\right)^{\frac{2n}{n-2}}f_2$$
where $u(t)$ is the solution of
(\ref{2}).
We normalize this smooth function on $M$ by
$$f(t)=\frac{h(t)}{(\int_{M}h(t)^2dV_{g(t)})^{\frac{1}{2}}}.$$
Then we can easily check that $f(t)$ satisfies (\ref{6}).

Set
$$G(g(t),f(t)):=\int_M|\nabla_{g(t)}f(t)|^2dV_{g(t)}.$$
Note that $G(g(t),f(t))$ is a smooth function in $t$.
Since $g=u^{\frac{4}{n-2}}g_0$, we have
\begin{equation}\label{9}
\langle\nabla_g f_1,\nabla_g f_2\rangle_g=u^{-\frac{4}{n-2}}\langle\nabla_{g_0} f_1,\nabla_{g_0} f_2\rangle_{g_0}
\end{equation}
for any functions $f_1, f_2$ in $M$, which implies that
\begin{equation*}
G(g(t),f(t))=\int_Mu^2|\nabla_{g_0} f|^2_{g_0}dV_{g_0}.
\end{equation*}
Differentiating it with respect to $t$, we get
\begin{equation}\label{3.13A}
\begin{split}
\mathcal{G}(g(t),f(t))&:=\frac{d}{dt}G(g(t),f(t))\\
&=\int_M2u\frac{\partial u}{\partial t}|\nabla_{g_0} f|^2_{g_0}dV_{g_0}
+2\int_Mu^2\langle\nabla_{g_0} f,\nabla_{g_0}(\frac{\partial f}{\partial t})\rangle_{g_0}dV_{g_0}\\
&=-\frac{n-2}{2}\int_M(R_g-\overline{R}_g)|\nabla_{g} f|^2_{g}dV_{g}
+2\int_M\langle\nabla_{g} f,\nabla_{g}(\frac{\partial f}{\partial t})\rangle_{g}dV_{g}\\
&=-\frac{n-2}{2}\int_M(R_g-\overline{R}_g)|\nabla_{g} f|^2_{g}dV_{g}
-2\int_M\frac{\partial f}{\partial t}\Delta_{g} fdV_{g}
\end{split}
\end{equation}
where the second last equality follows from (\ref{2}) and (\ref{9}),
the last equality follows from integration by parts.
It follows from the definition of $\mathcal{G}(g(t),f(t))$
in (\ref{3.13A}) that
\begin{equation}\label{3.13B}
G(g(t_2),f(t_2))-G(g(t_1),f(t_1))=\int_{t_1}^{t_2}\mathcal{G}(g(t),f(t))dt.
\end{equation}
Since $f(t_2)$ is the corresponding eigenfunction of
$\lambda_1(t_2)$, we have
\begin{equation}
G(g(t_2),f(t_2))=\lambda_1(t_2)\int_{M}f(t_2)^2dV_{g(t_2)}=\lambda_1(t_2)
\end{equation}
by (\ref{6}). On the other hand, it follows from (\ref{6}) and the definition
of $G(g(t),f(t))$ that
\begin{equation}\label{3.17A}
G(g(t_1),f(t_1))\geq \lambda_1(t_1)\int_{\partial M}f(t_1)^2dV_{g(t_1)}=\lambda_1(t_1).
\end{equation}
Now the result follows from (\ref{3.13A})-(\ref{3.17A}).
\end{proof}

Hence, we have the following:

\begin{prop}\label{prop2}
The first nonzero eigenvalue $\lambda_1$ of the Laplacian along the Yamabe flow
$(\ref{1})$ satisfies
$$\frac{d}{dt}\log\lambda_1\geq-\frac{n-2}{2}\Big(\max_MR_g-\overline{R}_g\Big)
+\frac{n}{2}\Big(\min_MR_g-\overline{R}_g\Big).$$
Here the derivative on the left hand side
is in the
sense of the $\liminf$ of backward difference quotients.
\end{prop}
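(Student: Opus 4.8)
The plan is to convert the integral estimate of Proposition \ref{AUX} into the stated one-sided differential inequality by passing to backward difference quotients. Fix a time $t$ and, for small $\epsilon>0$, apply Proposition \ref{AUX} with $t_1=t-\epsilon$ and $t_2=t$. This yields a smooth family $f$ on $M\times[t-\epsilon,t]$ satisfying the normalization (\ref{6}), with $f(t)$ equal to the eigenfunction of $\lambda_1(t)$, together with
$$\lambda_1(t)-\lambda_1(t-\epsilon)\geq-\frac{n-2}{2}\int_{t-\epsilon}^t\int_M(R_g-\overline{R}_g)|\nabla_g f|_g^2\,dV_g\,dt'-2\int_{t-\epsilon}^t\int_M\frac{\partial f}{\partial t'}\Delta_g f\,dV_g\,dt'.$$
Dividing by $\epsilon$ and taking $\liminf$ as $\epsilon\to0^+$, and noting that with $t_2=t$ fixed the constructed $f$ does not depend on $\epsilon$ and is smooth in $t'$, the fundamental theorem of calculus shows each time-averaged integral converges to its value at $t'=t$. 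Hence
$$\frac{d}{dt}\lambda_1\geq-\frac{n-2}{2}\int_M(R_g-\overline{R}_g)|\nabla_g f|_g^2\,dV_g-2\int_M\frac{\partial f}{\partial t}\Delta_g f\,dV_g,$$
where every quantity is now evaluated at time $t$, $f=f(t)$, and the left-hand derivative is read as the $\liminf$ of backward difference quotients.

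Next I would use that $f$ is the normalized eigenfunction at time $t$, so that $\Delta_g f=-\lambda_1 f$ and $\int_M|\nabla_g f|_g^2\,dV_g=\lambda_1$. For the first term, since $|\nabla_g f|_g^2\geq0$ and the coefficient $-\frac{n-2}{2}$ is negative, bounding $R_g-\overline{R}_g\leq\max_M R_g-\overline{R}_g$ gives
$$-\frac{n-2}{2}\int_M(R_g-\overline{R}_g)|\nabla_g f|_g^2\,dV_g\geq-\frac{n-2}{2}\Big(\max_M R_g-\overline{R}_g\Big)\lambda_1.$$

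The key step is the second term. I would differentiate the normalization $\int_M f^2\,dV_g=1$ in $t$ and invoke the volume evolution (\ref{3}), which gives $\frac{\partial}{\partial t}(dV_g)=-\frac{n}{2}(R_g-\overline{R}_g)dV_g$ and hence
$$2\int_M f\frac{\partial f}{\partial t}\,dV_g=\frac{n}{2}\int_M f^2(R_g-\overline{R}_g)\,dV_g.$$
Combining this with $\Delta_g f=-\lambda_1 f$ and then bounding $R_g-\overline{R}_g\geq\min_M R_g-\overline{R}_g$ yields
$$-2\int_M\frac{\partial f}{\partial t}\Delta_g f\,dV_g=2\lambda_1\int_M f\frac{\partial f}{\partial t}\,dV_g=\frac{n\lambda_1}{2}\int_M f^2(R_g-\overline{R}_g)\,dV_g\geq\frac{n\lambda_1}{2}\Big(\min_M R_g-\overline{R}_g\Big).$$
Adding the two bounds and dividing by $\lambda_1>0$ produces exactly the claimed estimate for $\frac{d}{dt}\log\lambda_1$; the division is legitimate for the $\liminf$ of backward quotients because $\lambda_1$ is positive and continuous, so the lower backward derivative of $\log\lambda_1$ equals $\lambda_1^{-1}$ times that of $\lambda_1$.

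The main obstacle is conceptual rather than computational: since $\lambda_1(t)$ is only known to be Lipschitz in $t$ (as emphasized in the introduction), one cannot differentiate it directly, and Proposition \ref{AUX} is precisely the device that furnishes a one-sided lower estimate robust enough to survive the passage to $\liminf$. The point requiring genuine care is justifying the interchange of the limit with the two time-integrals; this reduces to the smoothness of the explicitly constructed test function $f$ and its independence of $\epsilon$ once $t_2=t$ is fixed, which lets the fundamental theorem of calculus deliver the pointwise values at $t$.
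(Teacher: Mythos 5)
Your proposal is correct and takes essentially the same approach as the paper: it rests on Proposition \ref{AUX} applied with $t_1=t-\epsilon$, $t_2=t$, the differentiated normalization (\ref{7}) together with the volume evolution (\ref{3}) to convert the $\int_M\frac{\partial f}{\partial t}\Delta_g f\,dV_g$ term, the pointwise $\max/\min$ bounds on $R_g-\overline{R}_g$, and the passage from $\lambda_1$ to $\log\lambda_1$ using positivity and Lipschitz continuity. The only difference is organizational and harmless: you pass to the limit of the time-averaged integrals first (legitimate, since the constructed family $f$ is independent of $\epsilon$ once $t_2=t$ is fixed and the integrand is continuous in $s$) and then apply the eigenfunction identities at the single time $t$, whereas the paper bounds the integrands on the whole interval near $t_2$ by continuity with an auxiliary $\epsilon$-slack that is removed at the end.
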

\begin{proof}
Differentiate the first equation in (\ref{6}) with respect to $t$, we have
\begin{equation}\label{7}
\int_Mf\frac{\partial f}{\partial t}dV_g=\frac{n}{4}\int_Mf^2(R_g-\overline{R}_g)dV_g
\end{equation}
by (\ref{3}). On the other hand,
since $f(t_2)$ is the corresponding eigenfunction of
$\lambda_1(t_2)$, we have
\begin{equation}\label{8}
-\int_{M}\frac{\partial f(t_2)}{\partial t}\Delta_{g(t_2)} f(t_2)dV_{g(t_2)}
=\lambda_1(t_2)\int_Mf(t_2)\frac{\partial f(t_2)}{\partial t}dV_{g(t_2)}.
\end{equation}
Combining (\ref{7}) and (\ref{8}), we get
\begin{equation}\label{furthermore}
\begin{split}
-\int_{M}\frac{\partial f(t_2)}{\partial t}\Delta_{g(t_2)} f(t_2)dV_{g(t_2)}
&=\frac{n}{4}\lambda_1(t_2)\int_Mf(t_2)^2(R_{g(t_2)}-\overline{R}_{g(t_2)})dV_{g(t_2)}\\
&\geq\frac{n}{4}\lambda_1(t_2)\left(\min_MR_{g(t_2)}-\overline{R}_{g(t_2)}\right),
\end{split}
\end{equation}
where we have used (\ref{6}) in the last inequality.
Since $\lambda(t_2)$ is positive, we have for any $\epsilon>0$
\begin{equation}\label{further}
-\int_{M}\frac{\partial f(t_2)}{\partial t}\Delta_{g(t_2)} f(t_2)dV_{g(t_2)}
>\frac{n}{4}\lambda_1(t_2)\left(\min_MR_{g(t_2)}-\overline{R}_{g(t_2)}-\epsilon\right),
\end{equation}
by (\ref{furthermore}).
By the definition of $f$,
the function $s\mapsto\displaystyle\int_{M}\frac{\partial f(s)}{\partial t}\Delta_{g(s)} f(s)dV_{g(s)}$ is
continuous in $s$.
On the other hand,
the function $s\mapsto\displaystyle\left(\min_MR_{g(s)}-\overline{R}_{g(s)}\right)$ is
continuous in $s$.
Hence, it follows from (\ref{further})
that for any $\epsilon>0$
\begin{equation}\label{10}
-\int_{M}\frac{\partial f}{\partial t}\Delta_{g} fdV_{g}
\geq\frac{n}{4}\lambda_1(t_2)\left(\min_MR_{g}-\overline{R}_{g}-\epsilon\right)
\end{equation}
when $t$ is sufficiently closed to $t_2$.
On the other hand, we have
\begin{equation}\label{furthermore2}
\begin{split}
&-\int_M(R_{g(t_2)}-\overline{R}_{g(t_2)})|\nabla_{g(t_2)} f|^2_{g(t_2)}dV_{g(t_2)}\\
&\geq-\Big(\max_MR_{g(t_2)}-\overline{R}_{g(t_2)}\Big)\int_M|\nabla_{g(t_2)} f|^2_{g(t_2)}dV_{g(t_2)}
=-\Big(\max_MR_{g(t_2)}-\overline{R}_{g(t_2)}\Big)\lambda_1(t_2)
\end{split}
\end{equation}
by (\ref{6}) and the fact that $f(t_2)$ is the corresponding eigenfunction of
$\lambda_1(t_2)$.
Since $\lambda(t_2)$ is positive, we have for any $\epsilon>0$
\begin{equation}\label{further2}
-\int_M(R_{g(t_2)}-\overline{R}_{g(t_2)})|\nabla_{g(t_2)} f|^2_{g(t_2)}dV_{g(t_2)}>
-\Big(\max_MR_{g(t_2)}-\overline{R}_{g(t_2)}+\epsilon\Big)\lambda_1(t_2)
\end{equation}
By the definition of $f$,
the function $s\mapsto\displaystyle\int_M(R_{g(s)}-\overline{R}_{g(s)})|\nabla_{g(s)} f|^2_{g(s)}dV_{g(s)}$ is
continuous in $s$.
On the other hand,
the function $s\mapsto\displaystyle\left(\max_MR_{g(s)}-\overline{R}_{g(s)}\right)$ is continuous in $s$.
Hence, it follows from (\ref{further2})
that  for any $\epsilon>0$
\begin{equation}\label{11}
\begin{split}
-\int_M(R_g-\overline{R}_g)|\nabla_{g} f|^2_{g}dV_{g}
&\geq-\lambda_1(t_2)\Big(\max_MR_g-\overline{R}_g+\epsilon\Big)
\end{split}
\end{equation}
when $t$ is sufficiently closed to $t_2$.
Substituting (\ref{10}) and (\ref{11}) into
the inequality in Proposition \ref{AUX}, we obtain
\begin{equation*}
\begin{split}
&\lambda_1(t_2)-\lambda_1(t_1)\\
&\geq
-\frac{n-2}{2}\lambda_1(t_2)\int_{t_1}^{t_2}\Big(\max_MR_g-\overline{R}_g+\epsilon\Big)dt+\frac{n}{2}\lambda_1(t_2)
\int_{t_1}^{t_2}\Big(\min_MR_g-\overline{R}_g-\epsilon\Big)dt
\end{split}
\end{equation*}
for $t_1<t_2$ and $t_1$ sufficiently closed to $t_2$.
Dividing $t_2-t_1$
in the above inequality and letting $t_1$ go to $t_2$,
we obtain
\begin{equation}\label{2.16}
\begin{split}
&\liminf_{t_1\to t_2}\frac{\lambda_1(t_2)-\lambda_1(t_1)}{t_2-t_1}\\
&\geq
-\frac{n-2}{2}\lambda_1(t_2)\Big(\max_MR_{g(t_2)}-\overline{R}_{g(t_2)}+\epsilon\Big)+\frac{n}{2}\lambda_1(t_2)
\Big(\min_MR_{g(t_2)}-\overline{R}_{g(t_2)}-\epsilon\Big).
\end{split}
\end{equation}
Note that
\begin{equation}\label{2.16.5}
\liminf_{t_1\to t_2}\frac{\log\lambda_1(t_2)-\log\lambda_1(t_1)}{t_2-t_1}\geq\frac{1}{\lambda_1(t_2)}\liminf_{t_1\to t_2}\frac{\lambda_1(t_2)-\lambda_1(t_1)}{t_2-t_1}
\mbox{
for }t_1<t_2.
\end{equation}
To see this, note that
\begin{equation*}
\log\lambda_1(t_2)-\log\lambda_1(t_1)=\log\left(\Big(\frac{\lambda_1(t_2)}{\lambda_1(t_1)}-1\Big)+1\right)
=\Big(\frac{\lambda_1(t_2)}{\lambda_1(t_1)}-1\Big)+O\left(\Big(\frac{\lambda_1(t_2)}{\lambda_1(t_1)}-1\Big)^2\right)
\end{equation*}
which implies that
\begin{equation*}
\begin{split}
&\liminf_{t_1\to t_2}\frac{\log\lambda_1(t_2)-\log\lambda_1(t_1)}{t_2-t_1}\\
&\geq\liminf_{t_1\to t_2}\frac{1}{\lambda_1(t_1)}\left(\frac{\lambda_1(t_2)-\lambda_1(t_1)}{t_2-t_1}\right)
+\liminf_{t_1\to t_2}O\left(\frac{1}{t_2-t_1}\Big(\frac{\lambda_1(t_2)}{\lambda_1(t_1)}-1\Big)^2\right).
\end{split}
\end{equation*}
Note that
\begin{equation*}
\begin{split}
\liminf_{t_1\to t_2}\frac{1}{\lambda_1(t_1)}\left(\frac{\lambda_1(t_2)-\lambda_1(t_1)}{t_2-t_1}\right)
&\geq \left(\liminf_{t_1\to t_2}\frac{1}{\lambda_1(t_1)}\right)\left(\liminf_{t_1\to t_2}\frac{\lambda_1(t_2)-\lambda_1(t_1)}{t_2-t_1}\right)\\
&=\frac{1}{\lambda_1(t_2)}\liminf_{t_1\to t_2}\frac{\lambda_1(t_2)-\lambda_1(t_1)}{t_2-t_1}
\end{split}
\end{equation*}
and
$$\lim_{t_1\to t_2}\frac{1}{t_2-t_1}\Big(\frac{\lambda_1(t_2)}{\lambda_1(t_1)}-1\Big)^2=0$$
since $\lambda_1(t)$ is Lipschitz continuous in $t$,
(\ref{2.16.5}) follows from combining all these.
Now, combining (\ref{2.16}) and (\ref{2.16.5}), we have
\begin{equation*}
\begin{split}
&\liminf_{t_1\to t_2}\frac{\log\lambda_1(t_2)-\log\lambda_1(t_1)}{t_2-t_1}\\
&\geq -\frac{n-2}{2}\Big(\max_MR_{g(t_2)}-\overline{R}_{g(t_2)}+\epsilon\Big)
+\frac{n}{2}\Big(\min_MR_{g(t_2)}-\overline{R}_{g(t_2)}-\epsilon\Big).
\end{split}
\end{equation*}
Since $\epsilon>0$ is arbitrary, Proposition \ref{prop2} follows from letting $\epsilon\to 0$.
\end{proof}

Similarly, we can prove the following:

\begin{prop}\label{AUX1}
Let $g=g(t)$ be the solution of the
the Yamabe flow $(\ref{1})$
and $\lambda_1(t)$ be the corresponding first nonzero eigenvalue of the Laplacian.
Then for any $t_2\geq t_1$,
there exists a $C^\infty$ function $f$ on $M\times [t_1,t_2]$
satisfying
\begin{equation}\label{61}
\int_{M}f^2dV_{g}=1\hspace{2mm}\mbox{
and }\hspace{2mm}\displaystyle\int_{M}fdV_{g}=0\mbox{ for all }t\in[t_1,t_2],
\end{equation}
and
$$\lambda_1(t_2)\leq \lambda_1(t_1)
-\frac{n-2}{2}\int_{t_1}^{t_2}\int_M(R_g-\overline{R}_g)|\nabla_{g} f|^2_{g}dV_{g}dt-2\int_{t_1}^{t_2}\int_{M}\frac{\partial f}{\partial t}\Delta_{g} fdV_{g}dt$$
such that at time $t_1$, $f(t_1)$ is the corresponding eigenfunction of
$\lambda_1(t_1)$.
\end{prop}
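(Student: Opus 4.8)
The plan is to mirror the proof of Proposition \ref{AUX} essentially verbatim, exchanging the roles of the two endpoints $t_1$ and $t_2$ and reversing the direction of the variational comparison. Whereas in Proposition \ref{AUX} the construction was anchored at the later time $t_2$ and the eigenfunction propagated backward, here I would anchor at the earlier time $t_1$ and propagate forward.

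First I would let $f_1=f(t_1)$ be the normalized eigenfunction for the first nonzero eigenvalue $\lambda_1(t_1)$ of $g(t_1)$, and set
$$h(t)=\left(\frac{u(t_1)}{u(t)}\right)^{\frac{2n}{n-2}}f_1,\qquad f(t)=\frac{h(t)}{\left(\int_{M}h(t)^2dV_{g(t)}\right)^{1/2}}.$$
Using $dV_{g(t)}=u(t)^{\frac{2n}{n-2}}dV_{g_0}$ exactly as before, one checks that $\int_M f(t)\,dV_{g(t)}=u(t_1)^{\frac{2n}{n-2}}\int_M f_1\,dV_{g_0}=\int_M f_1\,dV_{g(t_1)}=0$ and $\int_M f(t)^2dV_{g(t)}=1$ for all $t$, so the normalization $(\ref{61})$ holds. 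Moreover $h(t_1)=f_1$ with $\int_M f_1^2\,dV_{g(t_1)}=1$, so $f(t_1)=f_1$; that is, $f(t_1)$ is the eigenfunction of $\lambda_1(t_1)$, as required in the statement.

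Next I would reuse the computation of $G(g(t),f(t))=\int_M|\nabla_{g(t)}f(t)|^2dV_{g(t)}$ and its derivative $\mathcal{G}(g(t),f(t))$ carried out in the proof of Proposition \ref{AUX}: nothing in the derivation of $(\ref{3.13A})$ used which endpoint carried the eigenfunction, so it transfers without change and gives
$$G(g(t_2),f(t_2))-G(g(t_1),f(t_1))=\int_{t_1}^{t_2}\mathcal{G}(g(t),f(t))\,dt,$$
with $\mathcal{G}$ expanded exactly as in $(\ref{3.13A})$, namely $\mathcal{G}=-\frac{n-2}{2}\int_M(R_g-\overline{R}_g)|\nabla_{g}f|^2_gdV_g-2\int_M\frac{\partial f}{\partial t}\Delta_g f\,dV_g$.

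The only genuinely different step, and the one whose direction must be gotten right, is the pairing with the variational characterization of $\lambda_1$. Since $f(t_1)$ is now the normalized eigenfunction at $t_1$, we have the exact identity $G(g(t_1),f(t_1))=\lambda_1(t_1)$, while at the later time $f(t_2)$ merely satisfies the constraints $(\ref{61})$, so the Rayleigh-quotient characterization gives $G(g(t_2),f(t_2))\geq\lambda_1(t_2)$. Combining these with the displayed identity yields
$$\lambda_1(t_2)\leq G(g(t_2),f(t_2))=\lambda_1(t_1)+\int_{t_1}^{t_2}\mathcal{G}(g(t),f(t))\,dt,$$
which is precisely the claimed upper bound. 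I do not anticipate any real obstacle: the construction of $f(t)$ and the differentiation of $G$ transfer unchanged from Proposition \ref{AUX}, and the whole content is that transporting the $t_1$-eigenfunction \emph{forward} makes $G(g(t_2),f(t_2))$ an upper test value for $\lambda_1(t_2)$ rather than a lower one, which flips the inequality relative to Proposition \ref{AUX}.
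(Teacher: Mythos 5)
Your proposal is correct and follows essentially the same route as the paper's own proof: anchor the eigenfunction at $t_1$, transport it via $h(t)=\left(u(t_1)/u(t)\right)^{\frac{2n}{n-2}}f_1$ with normalization, reuse the identity (\ref{3.13A})--(\ref{3.13B}), and combine the exact equality $G(g(t_1),f(t_1))=\lambda_1(t_1)$ with the Rayleigh-quotient bound $G(g(t_2),f(t_2))\geq\lambda_1(t_2)$. No discrepancies to report.
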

\begin{proof}
We only sketch the proof since it is almost the same as the proof of Proposition \ref{AUX}.
At time $t_1$, we  let $f_1 = f(t_1)$ be the eigenfunction for the
first nonzero eigenvalue
$\lambda_1(t_1)$ of $g(t_1)$. We define
$$h(t)=\left(\frac{u(t_1)}{u(t)}\right)^{\frac{2n}{n-2}}f_1$$
where $u(t)$ is the solution of
(\ref{2}). Then
$$f(t)=\frac{h(t)}{(\int_{M}h(t)^2dV_{g(t)})^{\frac{1}{2}}}$$
satisfies (\ref{61}).
As in the proof of  Proposition \ref{AUX}, we define
$$G(g(t),f(t)):=\int_M|\nabla_{g(t)}f(t)|^2dV_{g(t)}.$$
Then it is clear that (\ref{3.13A}) and (\ref{3.13B}) are still true.
Since $f(t_1)$ is the corresponding eigenfunction of
$\lambda_1(t_1)$, we have
\begin{equation*}
G(g(t_1),f(t_1))=\lambda_1(t_1)\int_{M}f(t_1)^2dV_{g(t_1)}=\lambda_1(t_1)
\end{equation*}
by (\ref{61}). On the other hand, it follows from (\ref{61}) and the definition
of $G(g(t),f(t))$ that
\begin{equation*}
G(g(t_2),f(t_2))\geq \lambda_1(t_2)\int_{\partial M}f(t_2)^2dV_{g(t_2)}=\lambda_1(t_2).
\end{equation*}
Now Lemma \ref{AUX1} follows from combining all these.
\end{proof}

We also have the following:

\begin{prop}\label{prop21}
The first nonzero eigenvalue $\lambda_1$ of the Laplacian along the Yamabe flow
$(\ref{1})$ satisfies
$$\frac{d}{dt}\log\lambda_1\leq-\frac{n-2}{2}\Big(\min_MR_g-\overline{R}_g\Big)
+\frac{n}{2}\Big(\max_MR_g-\overline{R}_g\Big).$$
Here the derivative on the left hand side
is in the
sense of the $\limsup$ of forward difference quotients.
\end{prop}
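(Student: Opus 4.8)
The plan is to mirror the proof of Proposition \ref{prop2}, but to start from Proposition \ref{AUX1} in place of Proposition \ref{AUX}, so that the distinguished eigenfunction now sits at the lower endpoint $t_1$ and every estimate reverses into an upper bound. First I take the function $f$ produced by Proposition \ref{AUX1}, for which $f(t_1)$ is the eigenfunction of $\lambda_1(t_1)$ and which satisfies the normalization \eqref{61} together with
$$\lambda_1(t_2)\leq \lambda_1(t_1)-\frac{n-2}{2}\int_{t_1}^{t_2}\int_M(R_g-\overline{R}_g)|\nabla_{g} f|^2_{g}dV_{g}dt-2\int_{t_1}^{t_2}\int_{M}\frac{\partial f}{\partial t}\Delta_{g} fdV_{g}dt.$$
Differentiating $\int_Mf^2dV_g=1$ in $t$ gives, exactly as in \eqref{7}, the identity $\int_Mf\frac{\partial f}{\partial t}dV_g=\frac{n}{4}\int_Mf^2(R_g-\overline{R}_g)dV_g$, and evaluating this at $t_1$, where $f(t_1)$ is an eigenfunction, yields
$$-\int_{M}\frac{\partial f(t_1)}{\partial t}\Delta_{g(t_1)} f(t_1)dV_{g(t_1)}=\frac{n}{4}\lambda_1(t_1)\int_Mf(t_1)^2\big(R_{g(t_1)}-\overline{R}_{g(t_1)}\big)dV_{g(t_1)}.$$

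The essential change is the direction of the two curvature estimates. Since I now want an upper bound, I use $\int_Mf(t_1)^2(R_g-\overline{R}_g)dV_g\leq \max_MR_{g(t_1)}-\overline{R}_{g(t_1)}$ for the first term, and, using that $f(t_1)$ is an eigenfunction so $\int_M|\nabla_{g(t_1)}f(t_1)|^2_{g(t_1)}dV_{g(t_1)}=\lambda_1(t_1)$, the bound
$$-\int_M\big(R_{g(t_1)}-\overline{R}_{g(t_1)}\big)|\nabla_{g(t_1)}f|^2_{g(t_1)}dV_{g(t_1)}\leq -\big(\min_MR_{g(t_1)}-\overline{R}_{g(t_1)}\big)\lambda_1(t_1)$$
for the gradient term. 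As in the derivation of \eqref{10} and \eqref{11}, I insert an arbitrary $\epsilon>0$ to make these strict and then invoke the continuity in $s$ of the maps $s\mapsto\int_M\frac{\partial f}{\partial t}\Delta_g fdV_g$ and $s\mapsto\int_M(R_g-\overline{R}_g)|\nabla_g f|^2_gdV_g$, together with the continuity of $s\mapsto\max_MR_{g(s)}-\overline{R}_{g(s)}$ and $s\mapsto\min_MR_{g(s)}-\overline{R}_{g(s)}$, to propagate the two bounds to all $t$ sufficiently close to $t_1$ (now from above, $t\in[t_1,t_2]$). Substituting into Proposition \ref{AUX1}, integrating, dividing by $t_2-t_1>0$, and taking $\limsup_{t_2\to t_1}$ gives
$$\limsup_{t_2\to t_1}\frac{\lambda_1(t_2)-\lambda_1(t_1)}{t_2-t_1}\leq -\frac{n-2}{2}\lambda_1(t_1)\big(\min_MR_{g(t_1)}-\overline{R}_{g(t_1)}-\epsilon\big)+\frac{n}{2}\lambda_1(t_1)\big(\max_MR_{g(t_1)}-\overline{R}_{g(t_1)}+\epsilon\big).$$

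Finally I pass from $\lambda_1$ to $\log\lambda_1$ as in \eqref{2.16.5}. The expansion $\log\lambda_1(t_2)-\log\lambda_1(t_1)=\big(\frac{\lambda_1(t_2)}{\lambda_1(t_1)}-1\big)+O\big(\big(\frac{\lambda_1(t_2)}{\lambda_1(t_1)}-1\big)^2\big)$ shows
$$\limsup_{t_2\to t_1}\frac{\log\lambda_1(t_2)-\log\lambda_1(t_1)}{t_2-t_1}\leq\frac{1}{\lambda_1(t_1)}\limsup_{t_2\to t_1}\frac{\lambda_1(t_2)-\lambda_1(t_1)}{t_2-t_1}.$$
This step is in fact cleaner than in Proposition \ref{prop2}: the base point $t_1$ is fixed while $t_2\to t_1$, so the factor $1/\lambda_1(t_1)$ is constant and can be pulled outside the $\limsup$ directly, with no $\liminf$/$\limsup$-of-a-product argument needed; the quadratic remainder contributes nothing because $\lambda_1$ is Lipschitz, whence $\frac{1}{t_2-t_1}\big(\frac{\lambda_1(t_2)}{\lambda_1(t_1)}-1\big)^2\to 0$. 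Combining the last two displays and letting $\epsilon\to 0$ gives the claimed inequality. I expect the only genuine subtlety to be bookkeeping the direction of every inequality (\emph{max} versus \emph{min}, and $\limsup$ of forward quotients rather than $\liminf$ of backward quotients) so that they all point consistently toward the upper bound; the analytic content is identical to that of Proposition \ref{prop2}.
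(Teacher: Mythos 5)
Your proof is correct and follows essentially the same route as the paper's: it starts from Proposition \ref{AUX1}, reverses the two curvature bounds (the maximum for the $\frac{\partial f}{\partial t}\Delta_g f$ term, the minimum for the gradient term), inserts $\epsilon$ and propagates the bounds by continuity for $t$ near $t_1$, and then passes to $\log\lambda_1$ via the same expansion before letting $\epsilon\to 0$. Your side remark that the logarithm step is cleaner here than in Proposition \ref{prop2} — because the base point $t_1$ is fixed, so $1/\lambda_1(t_1)$ is a genuine constant that can be pulled out of the $\limsup$ without any product-of-limits argument — is accurate and is a small simplification over the paper, which simply invokes the argument of (\ref{2.16.5}) mutatis mutandis.
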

\begin{proof}
Again we only sketch the proof since it is almost the same as the proof of Proposition \ref{prop2}.
As in the proof of Proposition \ref{prop2}, by (\ref{61}) and the fact that
$f(t_1)$ is the corresponding eigenfunction of
$\lambda_1(t_1)$, we have
\begin{equation*}
\begin{split}
-\int_{M}\frac{\partial f(t_1)}{\partial t}\Delta_{g(t_1)} f(t_1)dV_{g(t_1)}
&=\lambda_1(t_1)\int_Mf(t_2)\frac{\partial f(t_1)}{\partial t}dV_{g(t_1)}\\
&=\frac{n}{4}\lambda_1(t_1)\int_Mf(t_1)^2(R_{g(t_1)}-\overline{R}_{g(t_1)})dV_{g(t_1)}\\
&\leq\frac{n}{4}\lambda_1(t_1)\left(\max_MR_{g(t_1)}-\overline{R}_{g(t_1)}\right).
\end{split}
\end{equation*}
By continuity and the fact that $\lambda(t_1)>0$, we can conclude that for any $\epsilon>0$
\begin{equation}\label{2.17}
-\int_{M}\frac{\partial f}{\partial t}\Delta_{g} fdV_{g}
\leq\frac{n}{4}\lambda_1(t_1)\left(\max_MR_{g}-\overline{R}_{g}+\epsilon\right)
\end{equation}
when $t$ is sufficiently closed to $t_1$.
Similarly,
\begin{equation}\label{2.18}
\begin{split}
-\int_M(R_g-\overline{R}_g)|\nabla_{g} f|^2_{g}dV_{g}
&\leq-\lambda_1(t_1)\Big(\min_MR_g-\overline{R}_g-\epsilon\Big)
\end{split}
\end{equation}
when $t$ is sufficiently closed to $t_1$. Now
putting (\ref{2.17}) and (\ref{2.18}) into the inequality in Proposition \ref{AUX1}, we obtain
\begin{equation*}
\begin{split}
&\lambda_1(t_2)- \lambda_1(t_1)\\
&\leq
-\frac{n-2}{2}\lambda_1(t_1)\int_{t_1}^{t_2}\Big(\min_MR_g-\overline{R}_g-\epsilon\Big)dt+
\frac{n}{2}\lambda_1(t_1)
\int_{t_1}^{t_2}\Big(\max_MR_g-\overline{R}_g+\epsilon\Big)dt
\end{split}
\end{equation*}
for $t_2>t_1$ and $t_2$ sufficiently closed to $t_1$.
Dividing the last inequality by $t_2-t_1$ and
letting $t_2$ go to $t_1$, we get
\begin{equation}\label{2.21}
\begin{split}
&\limsup_{t_2\to t_1}\frac{\lambda_1(t_2)-\lambda_1(t_1)}{t_2-t_1}\\
&\leq
-\frac{n-2}{2}\lambda_1(t_1)\Big(\min_MR_{g(t_1)}-\overline{R}_{g(t_1)}-\epsilon\Big)+\frac{n}{2}\lambda_1(t_1)
\Big(\max_MR_{g(t_1)}-\overline{R}_{g(t_1)}+\epsilon\Big).
\end{split}
\end{equation}
Replacing $\displaystyle\liminf_{t_1\to t_2}$ by $\displaystyle\limsup_{t_2\to t_1}$
and reversing the sign of the inequality,
one can follow the arguments of proving (\ref{2.16.5}) to prove that
\begin{equation}\label{2.22}
\limsup_{t_2\to t_1}\frac{\log\lambda_1(t_2)-\log\lambda_1(t_1)}{t_2-t_1}\leq\frac{1}{\lambda_1(t_1)}\limsup_{t_2\to t_1}\frac{\lambda_1(t_2)-\lambda_1(t_1)}{t_2-t_1}
\mbox{
for }t_2>t_1.
\end{equation}
Combining (\ref{2.21}) and (\ref{2.22}), we have
\begin{equation*}
\begin{split}
&\limsup_{t_2\to t_1}\frac{\log\lambda_1(t_2)-\log\lambda_1(t_1)}{t_2-t_1}\\
&\leq-\frac{n-2}{2}\Big(\min_MR_{g(t_1)}-\overline{R}_{g(t_1)}-\epsilon\Big)+\frac{n}{2}
\Big(\max_MR_{g(t_1)}-\overline{R}_{g(t_1)}+\epsilon\Big).
\end{split}
\end{equation*}
Since $\epsilon>0$ is arbitrary, Proposition \ref{prop21} follows from letting $\epsilon\to 0$.
\end{proof}

Using the maximum principle, we can prove the following:

\begin{prop}\label{prop4}
If $\displaystyle\max_MR_{g_0}<0$, then
$\displaystyle\max_MR_{g}<0$ for all $t\geq 0$ under the Yamabe flow $(\ref{1})$.
\end{prop}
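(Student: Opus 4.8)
The plan is to run a maximum-principle argument on the reaction–diffusion equation (\ref{4}) for the scalar curvature. Set $\rho(t) := \max_M R_{g(t)}$. Because $R_g$ is smooth on $M\times[0,\infty)$ and $M$ is compact, $\rho$ is locally Lipschitz in $t$, hence differentiable at almost every $t$, and at each such $t$ its derivative is controlled by the evolution of $R_g$ at a point where the spatial maximum is attained.

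First I would invoke Hamilton's trick: fix $t$ and let $x_t\in M$ be a point with $R_{g(t)}(x_t)=\rho(t)$. Since $x_t$ is a spatial maximum (and $M$ has no boundary, so every point is interior), $\Delta_{g(t)}R_{g(t)}(x_t)\le 0$. Feeding this into (\ref{4}) gives, in the sense of the upper Dini derivative,
$$\frac{d}{dt}\rho(t)\le (n-1)\,\Delta_{g}R_{g}(x_t)+R_{g}(x_t)\big(R_{g}(x_t)-\overline{R}_{g}\big)\le \rho(t)\big(\rho(t)-\overline{R}_{g(t)}\big).$$
Here $\overline{R}_g$ depends only on $t$, so it plays the role of a constant for the spatial maximum principle.

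The key observation is that the maximum of a function always dominates its volume-weighted average, so $\rho(t)\ge \overline{R}_{g(t)}$ for every $t$ by the definition (\ref{0}); consequently $\rho(t)-\overline{R}_{g(t)}\ge 0$. Therefore, at any time at which $\rho(t)<0$, the right-hand side above is a product of a negative factor and a nonnegative factor, giving $\frac{d}{dt}\rho(t)\le 0$. In words, $\rho$ is nonincreasing as long as it stays negative.

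Finally I would close the argument by continuity. Since $\rho(0)=\max_M R_{g_0}<0$ by hypothesis, and $\rho$ cannot increase while it is negative, a standard barrier argument shows $\rho(t)\le \rho(0)<0$ for all $t\ge 0$; in particular $\rho$ can never reach $0$. This yields $\max_M R_{g(t)}<0$ for all $t\ge 0$, as claimed. The only delicate point is the rigorous justification of the differential inequality for the merely Lipschitz function $\rho$ together with the use of the maximum principle for the time-dependent metric $g(t)$; once the averaging inequality $\rho\ge\overline{R}_g$ is in hand, the sign of the reaction term falls out immediately and the remainder is routine.
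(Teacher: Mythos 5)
Your proof is correct, but it takes a genuinely different route from the paper's. The paper argues by contradiction at a first touching time: it introduces the barrier $F = R_g - \frac{1}{2}\max_M R_{g_0}$, supposes $F\geq 0$ somewhere, and at the first such time $t_0$ obtains $0 \leq R_g(R_g-\overline{R}_g)$ at the touching point. Since that product is only nonpositive there (not automatically strictly negative), the paper must separately exclude the degenerate case $\max_M R_{g(t_0)} = \overline{R}_{g(t_0)}$, i.e.\ constant scalar curvature at time $t_0$, and it does so by invoking the nontrivial fact that $t\mapsto \overline{R}_{g(t)}$ is nonincreasing along the Yamabe flow (citing Brendle). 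Your argument sidesteps this delicate point entirely: working with $\rho(t)=\max_M R_{g(t)}$ via Hamilton's trick, you only need the differential inequality $D^{+}\rho \leq \rho\,(\rho-\overline{R}_g)$ together with the trivial averaging inequality $\rho \geq \overline{R}_g$, and no strictness is required because you prove monotonicity of $\rho$ while it is negative rather than deriving a contradiction at a touching point. As a bonus, your argument yields the stronger conclusion $\max_M R_{g(t)} \leq \max_M R_{g_0}$ for all $t\geq 0$, which is precisely Proposition \ref{prop5} of the paper (proved there by a separate $\epsilon$-perturbed barrier argument), so your single computation subsumes both Proposition \ref{prop4} and Proposition \ref{prop5}. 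The two facts you defer as standard --- Hamilton's trick for the upper Dini derivative of a maximum over a compact manifold, and that a continuous function with nonpositive upper Dini derivative on an interval is nonincreasing --- are indeed routine; spelled out, one takes $T=\sup\{t\geq 0:\rho<0 \mbox{ on }[0,t]\}$, notes $\rho\leq\rho(0)<0$ on $[0,T)$ by monotonicity, and concludes $T=\infty$ by continuity.
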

\begin{proof}
Consider the function $F$ on $M\times [0,\infty)$ given
by $$F(x,t)=R_g-\frac{1}{2}\max_MR_{g_0}.$$
We claim that $F<0$.
By contradiction, we suppose that
\begin{equation}\label{13}
F(x_0,t_0)\geq 0\mbox{ for some }(x_0,t_0)\in M\times [0,\infty).
\end{equation}
Since $g=g_0$ at $t=0$ and
$$F(x,0)=R_{g_0}-\frac{1}{2}\max_MR_{g_0}\leq \frac{1}{2}\max_MR_{g_0}<0$$ by assumption,
we must have $t_0>0$.
We assume that $t_0$ is the smallest time satisfying (\ref{13}), i.e.
\begin{equation}\label{14}
F(x,t)<0\mbox{ for all }t\in [0,t_0).
\end{equation}
By continuity, we have
\begin{equation}\label{14a}
F(x,t_0)\leq 0\mbox{ for all }x\in M.
\end{equation}
Combining (\ref{13}) and (\ref{14a}), we have
\begin{equation}\label{14b}
0=F(x_0,t_0)=\max_{x\in M}F(x,t_0),
\end{equation}
which implies
\begin{equation}\label{14c}
R_g(x_0,t_0)=\max_{x\in M}R_g(x,t_0)
\end{equation}
by the definition of $F$.

Therefore, at $(x_0,t_0)$, we have
\begin{equation}\label{14d}
\begin{split}
0\leq \frac{\partial F}{\partial t}=\frac{\partial R_g}{\partial t}
&=(n-1)\Delta_gR_g+R_g(R_g-\overline{R}_g)\\
&\leq R_g(R_g-\overline{R}_g),
\end{split}
\end{equation}
where the first inequality follows from
the fact that $F(x_0,t)$ is increasing at $t_0$
by (\ref{14})-(\ref{14b}), and the
second equality follows from (\ref{4}), and the last inequality follows from (\ref{14c}).
But this is a contradiction, since the last term of (\ref{14d}) is negative.
To see this, it follows from (\ref{14b}) and the definition of $F$ that
\begin{equation}\label{term1}
R_g(x_0,t_0)=\frac{1}{2}\max_MR_{g_0},
\end{equation}
which implies together with (\ref{14c})
that at $(x_0,t_0)$
\begin{equation}\label{term}
R_g(R_g-\overline{R}_g)=\frac{1}{2}\max_MR_{g_0}\left(R_g-\overline{R}_g\right)=\frac{1}{2}\max_MR_{g_0}\left(\max_{x\in M}R_g(x,t_0)-\overline{R}_g\right)
\end{equation}
Since
$\displaystyle\max_MR_{g_0}<0$ by assumption, (\ref{term}) is nonpositive and
is equal to zero if and only if
$\displaystyle\max_{x\in M}R_g(x,t_0)=\overline{R}_g,$
or equivalently, $g(t_0)$ has constant scalar curvature.
Hence, it follows from (\ref{term1}) that
\begin{equation*}
\overline{R}_{g(t_0)}=\frac{\int_{M}R_{g(t_0)}dV_{g(t_0)}}{\int_{M}dV_{g(t_0)}}=
\frac{1}{2}\max_MR_{g_0}>\max_MR_{g_0}\geq \overline{R}_{g_0},
\end{equation*}
which is a contradiction, since $t\mapsto \overline{R}_{g(t)}$ is nonincreasing
along the Yamabe flow (see (9) in \cite{Brendle4}).
This shows that (\ref{term}) must be negative, as we claimed.

This contradiction shows that (\ref{13}) is impossible. This proves that $F<0$, or equivalently,
$R_g<\frac{1}{2}\max_MR_{g_0}<0$, which proves the assertion.
\end{proof}

Similarly, we have the following:

\begin{prop}\label{prop5}
If $\displaystyle\max_MR_{g_0}<0$, then
$\displaystyle\max_MR_{g}\leq \max_MR_{g_0}$ for all $t\geq 0$ under the Yamabe flow $(\ref{1})$.
\end{prop}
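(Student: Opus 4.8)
The plan is to argue exactly as in the proof of Proposition \ref{prop4}, but with the threshold $\frac{1}{2}\max_M R_{g_0}$ replaced by $\max_M R_{g_0}$ itself, and to exploit the sharper information now available, namely that $R_g<0$ for all $t\geq 0$ by Proposition \ref{prop4}. The engine is again the maximum principle applied to the evolution equation (\ref{4}), $\partial_t R_g=(n-1)\Delta_g R_g+R_g(R_g-\overline{R}_g)$.

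First I would fix $\epsilon>0$ and consider $F_\epsilon(x,t)=R_g(x,t)-\max_M R_{g_0}-\epsilon$. At $t=0$ we have $F_\epsilon(x,0)=R_{g_0}(x)-\max_M R_{g_0}-\epsilon\leq-\epsilon<0$, so, as in Proposition \ref{prop4}, any first time $t_0$ at which $F_\epsilon$ reaches $0$ must be strictly positive, and at the corresponding point $(x_0,t_0)$ the function $R_g(\cdot,t_0)$ attains its spatial maximum while $\partial_t R_g(x_0,t_0)\geq 0$ (the one-sided time derivative, since $F_\epsilon(x_0,\cdot)$ is increasing at $t_0$). Plugging into (\ref{4}) and using $\Delta_g R_g(x_0,t_0)\leq 0$ yields $0\leq R_g(R_g-\overline{R}_g)$ at $(x_0,t_0)$.

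The decisive step is the sign analysis of the right-hand side, and I expect the borderline equality case to be the main obstacle. Since $R_g(x_0,t_0)=\max_M R_g(\cdot,t_0)\geq\overline{R}_{g(t_0)}$ we have $R_g-\overline{R}_g\geq 0$ there, while $R_g(x_0,t_0)<0$ by Proposition \ref{prop4}; hence $R_g(R_g-\overline{R}_g)\leq 0$, which together with $0\leq R_g(R_g-\overline{R}_g)$ forces equality. As $R_g<0$ this can only happen if $R_g-\overline{R}_g=0$ at $x_0$, i.e.\ $R_g(\cdot,t_0)$ is spatially constant, so that $\overline{R}_{g(t_0)}=R_g(x_0,t_0)=\max_M R_{g_0}+\epsilon>\max_M R_{g_0}\geq\overline{R}_{g_0}$. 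This contradicts the fact, already invoked in Proposition \ref{prop4}, that $t\mapsto\overline{R}_{g(t)}$ is nonincreasing along the Yamabe flow (see (9) in \cite{Brendle4}). Therefore $F_\epsilon<0$ for every $\epsilon>0$, and letting $\epsilon\to 0$ gives $R_g\leq\max_M R_{g_0}$, i.e.\ $\max_M R_g\leq\max_M R_{g_0}$ for all $t\geq 0$.

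I would present this contradiction version to keep the exposition parallel to Proposition \ref{prop4}. A slightly slicker alternative avoids the $\epsilon$-perturbation altogether: set $\phi(t)=\max_M R_{g(t)}$, which is Lipschitz in $t$, and apply Hamilton's trick to (\ref{4}). At a point realizing the spatial maximum one obtains, in the sense of Dini derivatives, $\frac{d}{dt}\phi\leq\phi\,(\phi-\overline{R}_g)\leq 0$, the last inequality because $\phi<0$ by Proposition \ref{prop4} and $\phi-\overline{R}_g\geq 0$; integrating then gives $\phi(t)\leq\phi(0)=\max_M R_{g_0}$. This monotonicity viewpoint makes transparent why the conclusion holds, the only genuine subtlety in either route being the low regularity of $t\mapsto\max_M R_g$ and the tie-breaking in the borderline equality case handled via the monotonicity of $\overline{R}_g$.
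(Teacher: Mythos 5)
Your argument is correct, but it resolves the key difficulty differently from the paper. The paper's proof of this proposition perturbs by a \emph{time-dependent} term: it sets $F(x,t)=R_g(x,t)-\epsilon(t+1)$, so that at the first-violation point one gets $0\leq\partial_tF=-\epsilon+\partial_tR_g\leq-\epsilon+R_g(R_g-\overline{R}_g)\leq-\epsilon$, where the last step only needs $\max_MR_g<0$ (Proposition \ref{prop4}) together with $\max_MR_g-\overline{R}_g\geq0$; the built-in $-\epsilon$ makes the contradiction immediate, with no equality case to analyze and no further input. Your constant shift $F_\epsilon=R_g-\max_MR_{g_0}-\epsilon$ only yields the weak inequality $0\leq R_g(R_g-\overline{R}_g)\leq0$, so you are forced into the borderline case $R_g(\cdot,t_0)\equiv\overline{R}_{g(t_0)}$, which you then rule out by the monotonicity of $t\mapsto\overline{R}_{g(t)}$ along the flow --- exactly the mechanism the paper reserves for Proposition \ref{prop4}, not for this proposition. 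Both routes are valid; the paper's buys a shorter, self-contained argument (modulo Proposition \ref{prop4}), while yours keeps the two propositions structurally parallel at the cost of invoking the monotonicity of $\overline{R}_g$ a second time. Your alternative via Hamilton's trick, $\frac{d}{dt}\max_MR_g\leq\max_MR_g\,(\max_MR_g-\overline{R}_g)\leq0$ in the Dini sense, is also sound and arguably the most transparent, though it relies on the lemma about differentiating $t\mapsto\max_MR_{g(t)}$, machinery the paper deliberately avoids here.
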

\begin{proof}
The proof is similar to Proposition \ref{prop4}.
For $\epsilon>0$, we define the function $F(x,t)=R_g(x,t)-\epsilon (t+1)$ on $M\times [0,\infty)$.
We claim that $\displaystyle\max_M F<\max_MR_{g_0}$ on  $[0,\infty)$.
By contradiction, we suppose that
\begin{equation}\label{16}
F(x_0,t_0)\geq \max_MR_{g_0}\mbox{ for some }(x_0,t_0)\in M\times [0,\infty).
\end{equation}
Since $g=g_0$ at $t=0$, we have
$\displaystyle\max_MF=\max_MR_{g_0}-\epsilon<\max_MR_{g_0}$. Therefore, we must have $t_0>0$.
We may assume that $t_0$ is the smallest time satisfying (\ref{16}), i.e.
\begin{equation}\label{17}
F(x,t)<\max_MR_{g_0}\mbox{ for all }(x,t)\in M\times [0,t_0).
\end{equation}
By continuity, we have
\begin{equation}\label{17a}
F(x,t_0)\leq \max_MR_{g_0}\mbox{ for all }x\in M.
\end{equation}
Combining (\ref{16}) and (\ref{17a}), we have
\begin{equation}\label{17b}
F(x_0,t_0)=\max_{x\in M}F(x,t_0),
\end{equation}
which implies
\begin{equation}\label{17c}
R_g(x_0,t_0)=\max_{x\in M}R_g(x,t_0),
\end{equation}
by the definition of $F$.

Therefore, at $(x_0,t_0)$, we have
\begin{equation*}
\begin{split}
0\leq\frac{\partial F}{\partial t}=-\epsilon+\frac{\partial R_g}{\partial t}
&=-\epsilon+(n-1)\Delta_gR_g+R_g(R_g-\overline{R}_g)\\
&\leq -\epsilon+R_g(R_g-\overline{R}_g)\\
&=-\epsilon+\max_{x\in M}R_g(x,t_0)\left(\max_{x\in M}R_g(x,t_0)-\overline{R}_g\right)\leq -\epsilon,
\end{split}
\end{equation*}
where the first inequality follows from (\ref{17})-(\ref{17b}), the
second equality follows from (\ref{4}),
the second inequality and the third equality follows from (\ref{17c}),
and the last inequality follows from
the fact that $\displaystyle\max_MR_g< 0$ by Proposition \ref{prop4}.
This contradicts the assumption that $\epsilon>0$, which proves the claim.

By the claim, for any $\epsilon>0$, we have $\displaystyle\max_MF<\max_MR_{g_0}$ on $[0,\infty)$.
By letting $\epsilon\to 0$, we get the required estimate.
\end{proof}

We can also prove the following:

\begin{prop}\label{prop6}
If $\displaystyle\max_MR_{g_0}<0$, then
$\displaystyle\min_MR_{g}\geq \min_MR_{g_0}$ for all $t\geq 0$ under the Yamabe flow $(\ref{1})$.
\end{prop}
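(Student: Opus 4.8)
The plan is to run the same barrier/maximum-principle argument as in Proposition \ref{prop5}, but now controlling the \emph{minimum} of the scalar curvature and adding the perturbation instead of subtracting it. For $\epsilon>0$ I would introduce the function
$$F(x,t)=R_g(x,t)+\epsilon(t+1)\quad\text{on }M\times[0,\infty),$$
and claim that $\displaystyle\min_MF>\min_MR_{g_0}$ on $[0,\infty)$. At $t=0$ this is clear, since $g=g_0$ gives $\displaystyle\min_MF=\min_MR_{g_0}+\epsilon>\min_MR_{g_0}$.

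To prove the claim I would argue by contradiction: suppose $F(x_0,t_0)\leq\min_MR_{g_0}$ for some $(x_0,t_0)$, and take $t_0$ to be the smallest such time, so that $F(x,t)>\min_MR_{g_0}$ for all $(x,t)\in M\times[0,t_0)$. By continuity $F(\cdot,t_0)\geq\min_MR_{g_0}$ everywhere, and combined with the contradiction hypothesis this forces $F(x_0,t_0)=\min_{x\in M}F(x,t_0)=\min_MR_{g_0}$. Hence $(x_0,t_0)$ is a spatial minimum of $R_g(\cdot,t_0)$, where $\Delta_gR_g\geq 0$, and $F(x_0,\cdot)$ is nonincreasing at $t_0$, so $\partial_tF\leq 0$ there. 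Using the evolution equation (\ref{4}) I then obtain, at $(x_0,t_0)$,
$$0\geq\frac{\partial F}{\partial t}=\epsilon+(n-1)\Delta_gR_g+R_g(R_g-\overline{R}_g)\geq\epsilon+R_g(R_g-\overline{R}_g).$$

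The decisive step, and the main obstacle, is controlling the sign of the reaction term $R_g(R_g-\overline{R}_g)$ at the spatial minimum. Because $(x_0,t_0)$ minimizes $R_g(\cdot,t_0)$, we have $R_g(x_0,t_0)-\overline{R}_{g(t_0)}\leq 0$; and Proposition \ref{prop4} supplies $R_g(x_0,t_0)<0$. The product of a negative and a nonpositive quantity is nonnegative, so $R_g(R_g-\overline{R}_g)\geq 0$ and therefore $0\geq\partial_tF\geq\epsilon>0$, a contradiction. This proves $\min_MF>\min_MR_{g_0}$, i.e. $R_g(x,t)>\min_MR_{g_0}-\epsilon(t+1)$ for every $\epsilon>0$, and letting $\epsilon\to 0$ gives $\min_MR_g\geq\min_MR_{g_0}$.

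I want to emphasize that the negativity of the scalar curvature furnished by Proposition \ref{prop4} is exactly what makes the argument close: without $R_g<0$ the factor $R_g$ could be positive while $R_g-\overline{R}_g\leq 0$, so the reaction term could be nonpositive and the minimum principle would fail. In contrast to Proposition \ref{prop4}, no auxiliary argument involving the monotonicity of $\overline{R}_g$ is needed here, since the strict inequality is produced directly by the $\epsilon(t+1)$ barrier.
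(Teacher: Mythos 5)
Your proof is correct and follows essentially the same route as the paper: the same barrier $F=R_g+\epsilon(t+1)$, the same smallest-time contradiction argument, and the same sign analysis of the reaction term $R_g(R_g-\overline{R}_g)$ at the spatial minimum using $R_g<0$ from Proposition \ref{prop4} together with $\min_M R_g-\overline{R}_g\leq 0$. Your closing remark is also accurate: unlike Proposition \ref{prop4}, neither your argument nor the paper's needs the monotonicity of $\overline{R}_g$ here, since the $\epsilon(t+1)$ term supplies the strict inequality.
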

\begin{proof}
For $\epsilon>0$, we define the function $F(x,t)=R_g(x,t)+\epsilon (t+1)$ on $M\times [0,\infty)$.
We claim that $\displaystyle\min_M F>\min_MR_{g_0}$ on  $[0,\infty)$.
By contradiction, we suppose that
\begin{equation}\label{40}
F(x_0,t_0)\leq \max_MR_{g_0}\mbox{ for some }(x_0,t_0)\in M\times [0,\infty).
\end{equation}
Since $g=g_0$ at $t=0$, we have
$\displaystyle\min_MF=\min_MR_{g_0}+\epsilon>\min_MR_{g_0}$. Therefore, we must have $t_0>0$.
We may assume that $t_0$ is the smallest time satisfying (\ref{40}), i.e.
\begin{equation}\label{41}
F(x,t)>\min_MR_{g_0}\mbox{ for all }(x,t)\in M\times [0,t_0).
\end{equation}
By continuity, we have
\begin{equation}\label{42}
F(x,t_0)\geq \max_MR_{g_0}\mbox{ for all }x\in M.
\end{equation}
Combining (\ref{40}) and (\ref{42}), we have
\begin{equation}\label{43}
F(x_0,t_0)=\min_{x\in M}F(x,t_0),
\end{equation}
which implies
\begin{equation}\label{44}
R_g(x_0,t_0)=\min_{x\in M}R_g(x,t_0),
\end{equation}
by the definition of $F$.

Therefore, at $(x_0,t_0)$, we have
\begin{equation*}
\begin{split}
0\geq\frac{\partial F}{\partial t}=\epsilon+\frac{\partial R_g}{\partial t}
&=\epsilon+(n-1)\Delta_gR_g+R_g(R_g-\overline{R}_g)\\
&\geq \epsilon+R_g(R_g-\overline{R}_g)\\
&=\epsilon+\min_{x\in M}R_g(x,t_0)\left(\min_{x\in M}R_g(x,t_0)-\overline{R}_g\right)\geq \epsilon,
\end{split}
\end{equation*}
where the first inequality follows from (\ref{41})-(\ref{43}), the
second equality follows from (\ref{4}),
the second inequality and the third equality follows from (\ref{44}),
and the last inequality follows from
the fact that $\displaystyle\min_M R_g\leq\max_MR_g< 0$ by Proposition \ref{prop4}.
This contradicts the assumption that $\epsilon>0$, which proves the claim.

By the claim, for any $\epsilon>0$, we have $\displaystyle\min_MF>\min_MR_{g_0}$ on $[0,\infty)$.
By letting $\epsilon\to 0$, we get the required estimate.
\end{proof}

Using the maximum principle, we can also prove the following:

\begin{lem}\label{thm1}
If $\displaystyle\max_MR_{g_0}<0$, then
\begin{equation*}
R_{g(t)}\leq \overline{R}_{g_0}+\left(\max_MR_{g_0}-\min_MR_{g_0}\right)
+\left(\max_MR_{g_0}\right)\int_0^{t}\left(\max_MR_{g(s)}-\overline{R}_{g(s)}\right)ds.
\end{equation*}
 for all $t\geq 0$ under the Yamabe flow $(\ref{1})$.
\end{lem}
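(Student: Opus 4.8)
The plan is to argue by the maximum principle, following the scheme used in Propositions \ref{prop5} and \ref{prop6}. First I would isolate the quantities on the right hand side that depend on $t$ but not on $x$: set
$$C:=\overline{R}_{g_0}+\Big(\max_MR_{g_0}-\min_MR_{g_0}\Big)\quad\mbox{and}\quad\psi(t):=\Big(\max_MR_{g_0}\Big)\int_0^t\Big(\max_MR_{g(s)}-\overline{R}_{g(s)}\Big)ds,$$
so that the claimed inequality becomes $R_{g(t)}\leq C+\psi(t)$, and observe that $\psi'(t)=\big(\max_MR_{g_0}\big)\big(\max_MR_{g(t)}-\overline{R}_{g(t)}\big)$.

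For $\epsilon>0$ I would consider the auxiliary function $F(x,t)=R_g(x,t)-\psi(t)-\epsilon(t+1)$ on $M\times[0,\infty)$ and claim that $\max_MF<C$ on $[0,\infty)$. At $t=0$ this holds, since $\max_MF=\max_MR_{g_0}-\epsilon$ while $C\geq\max_MR_{g_0}$ because $\overline{R}_{g_0}\geq\min_MR_{g_0}$. Assuming the claim fails, I would let $t_0>0$ be the first time at which some $F(x_0,t_0)=C$; as $F$ and $R_g$ differ by a function of $t$ alone, $x_0$ is a spatial maximum of $R_g(\cdot,t_0)$, so $R_g(x_0,t_0)=\max_MR_{g(t_0)}$ and $\Delta_gR_g\leq0$ there, whereas $\partial_tF\geq0$. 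Substituting the evolution equation $(\ref{4})$ yields, at $(x_0,t_0)$,
$$0\leq\frac{\partial F}{\partial t}\leq R_g\big(R_g-\overline{R}_g\big)-\psi'(t_0)-\epsilon.$$

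The crux is the pointwise bound $R_g(R_g-\overline{R}_g)\leq\psi'(t_0)$ at this maximum point, and this is exactly where Propositions \ref{prop4} and \ref{prop5} enter. By Proposition \ref{prop5} together with Proposition \ref{prop4}, $R_g(x_0,t_0)=\max_MR_{g(t_0)}\leq\max_MR_{g_0}<0$, while $\max_MR_{g(t_0)}-\overline{R}_{g(t_0)}\geq0$ since a maximum dominates the average. Multiplying the inequality $R_g(x_0,t_0)\leq\max_MR_{g_0}$ by the nonnegative factor $\max_MR_{g(t_0)}-\overline{R}_{g(t_0)}$ preserves its direction and gives precisely $R_g(R_g-\overline{R}_g)\leq\psi'(t_0)$. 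Hence $\partial_tF\leq-\epsilon<0$, contradicting $\partial_tF\geq0$, which establishes $\max_MF<C$ for every $\epsilon>0$. Letting $\epsilon\to0$ then yields $R_{g(t)}\leq C+\psi(t)$, as asserted.

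The step I expect to demand the most care is the sign bookkeeping in that multiplication: it is essential that the scalar curvature stays strictly negative along the flow, which Proposition \ref{prop4} guarantees, so that multiplying the two negative quantities $R_g$ and $\max_MR_{g_0}$ by the nonnegative spectral gap $\max_MR_g-\overline{R}_g$ keeps the inequality pointing the right way. Were the scalar curvature allowed to change sign, the direction would flip and this estimate would break down.
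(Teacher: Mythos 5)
Your proof is correct and follows essentially the same route as the paper: the same auxiliary function (merely repackaged as $C+\psi(t)$ together with the $\epsilon(t+1)$ term), the same first-crossing-time maximum-principle argument at a spatial maximum of $R_g$, and the same invocation of Proposition \ref{prop5} to close the contradiction. One small caveat on your final remark: strict negativity is not actually needed for the multiplication step itself---multiplying $\max_MR_{g(t_0)}\leq\max_MR_{g_0}$ by the nonnegative gap $\max_MR_{g(t_0)}-\overline{R}_{g(t_0)}$ preserves the inequality regardless of signs; the hypothesis $\max_MR_{g_0}<0$ enters only through the proofs of Propositions \ref{prop4} and \ref{prop5}, which you correctly cite.
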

\begin{proof}
For $\epsilon>0$, we let
\begin{equation*}
\begin{split}
F(x,t)=&\,R_g(x,t)-\left(\max_MR_{g_0}-\min_MR_{g_0}\right)\\
&-\left(\max_MR_{g_0}\right)\int_0^{t}\left(\max_MR_{g(s)}-\overline{R}_{g(s)}\right)ds-\epsilon (t+1)
\end{split}
\end{equation*}
be a function defined on $M\times [0,\infty)$.
We claim that $F(x,t)<\overline{R}_{g_0}$.
By contradiction, we suppose that
\begin{equation}\label{18a}
F(x_0,t_0)\geq \overline{R}_{g_0}\mbox{ for some }(x_0,t_0)\in M\times [0,\infty).
\end{equation}
Since $g=g_0$ at $t=0$, we have
$$-\overline{R}_{g_0}+F(x,0)=-
\overline{R}_{g_0}+R_{g_0}-\left(\max_MR_{g_0}-\min_MR_{g_0}\right)-\epsilon\leq-\epsilon<0.$$ Therefore, we must have $t_0>0$.
We may assume that $t_0$ is the smallest time satisfying (\ref{18a}), i.e.
\begin{equation}\label{18b}
F(x,t)<\overline{R}_{g_0}\mbox{ for all }(x,t)\in M\times [0,t_0).
\end{equation}
By continuity, we have
\begin{equation}\label{18c}
F(x,t_0)\leq \overline{R}_{g_0}\mbox{ for all }x\in M.
\end{equation}
Combining (\ref{18a})-(\ref{18c}), we have
\begin{equation}\label{18d}
F(x_0,t_0)=\max_{x\in M}F(x,t_0),
\end{equation}
which implies
\begin{equation}\label{18e}
R_g(x_0,t_0)=\max_{x\in M}R_g(x,t_0)
\end{equation}
by the definition of $F$.

Therefore, at $(x_0,t_0)$, we have
\begin{equation*}
\begin{split}
0&\leq\frac{\partial F}{\partial t}=
\frac{\partial}{\partial t}R_g
-\left(\max_MR_{g_0}\right)\left(\max_MR_{g(t_0)}-\overline{R}_{g(t_0)}\right)-\epsilon\\
&=(n-1)\Delta_gR_g+R_g(R_g-\overline{R}_g)-\left(\max_MR_{g_0}\right)\left(\max_MR_{g(t_0)}-\overline{R}_{g(t_0)}\right)-\epsilon\\
&\leq R_g(R_g-\overline{R}_g)-\left(\max_MR_{g_0}\right)\left(\max_MR_{g(t_0)}-\overline{R}_{g(t_0)}\right)-\epsilon\\
&= \left(\max_MR_{g(t_0)}\right)\left(\max_MR_{g(t_0)}-\overline{R}_{g(t_0)}\right)-\left(\max_MR_{g_0}\right)\left(\max_MR_{g(t_0)}-\overline{R}_{g(t_0)}\right)-\epsilon\\
&\leq -\epsilon,
\end{split}
\end{equation*}
where the first inequality follows from (\ref{18b})-(\ref{18d}),
the second equality follows from (\ref{4}),  the second inequality
and the third equality follows from
(\ref{18e}), and the last inequality follows from Proposition \ref{prop5}.
This contradicts to the assumption that $\epsilon>0$.
This contradiction shows that $F(x,t)<\overline{R}_{g_0}$.
Letting $\epsilon\to 0$, we get the desired result.
\end{proof}

Similarly, we can prove the following:

\begin{lem}\label{thm2}
If $\displaystyle\max_MR_{g_0}<0$, then
\begin{equation*}
R_{g(t)}\geq\overline{R}_{g_0}-\left(\max_MR_{g_0}-\min_MR_{g_0}\right)
+\left(\max_MR_{g_0}\right)\int_0^{t}\left(\min_MR_{g(s)}-\overline{R}_{g(s)}\right)ds.
\end{equation*}
 for all $t\geq 0$ under the Yamabe flow $(\ref{1})$
\end{lem}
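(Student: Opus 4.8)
The statement is the lower-bound mirror of Lemma \ref{thm1}, so the plan is to run the same barrier argument with a minimum principle in place of the maximum principle. Concretely, for $\epsilon>0$ I would introduce the auxiliary function
\begin{equation*}
F(x,t)=R_g(x,t)+\left(\max_MR_{g_0}-\min_MR_{g_0}\right)-\left(\max_MR_{g_0}\right)\int_0^{t}\left(\min_MR_{g(s)}-\overline{R}_{g(s)}\right)ds+\epsilon(t+1)
\end{equation*}
on $M\times[0,\infty)$ and claim that $F>\overline{R}_{g_0}$; letting $\epsilon\to0$ then gives exactly the asserted lower bound for $R_{g(t)}$.

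To prove the claim I would argue by contradiction, assuming $F(x_0,t_0)\leq\overline{R}_{g_0}$ at some first time $t_0$. At $t=0$ one has $F(x,0)=R_{g_0}(x)+(\max_MR_{g_0}-\min_MR_{g_0})+\epsilon\geq\max_MR_{g_0}+\epsilon>\overline{R}_{g_0}$, using $R_{g_0}\geq\min_MR_{g_0}$ and $\max_MR_{g_0}\geq\overline{R}_{g_0}$; hence $t_0>0$. Choosing $t_0$ minimal and using continuity, the point $(x_0,t_0)$ is a spatial minimum of $F(\cdot,t_0)$ with $F(x_0,\cdot)$ nonincreasing at $t_0$, so that $R_g(x_0,t_0)=\min_MR_{g(t_0)}$, $\Delta_gR_g(x_0,t_0)\geq0$, and $\partial_tF(x_0,t_0)\leq0$.

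Substituting the evolution equation (\ref{4}) and discarding the Laplacian via $\Delta_gR_g\geq0$, I would reduce the inequality $0\geq\partial_tF(x_0,t_0)$ to comparing $R_g(R_g-\overline{R}_g)$, which equals $\left(\min_MR_{g(t_0)}\right)\left(\min_MR_{g(t_0)}-\overline{R}_{g(t_0)}\right)$ at this point, with the term $\left(\max_MR_{g_0}\right)\left(\min_MR_{g(t_0)}-\overline{R}_{g(t_0)}\right)$ coming from the time derivative of the integral. The resulting chain should give $0\geq\partial_tF(x_0,t_0)\geq\epsilon$, the desired contradiction, whence $F>\overline{R}_{g_0}$.

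The single delicate step, and the one I expect to be the main obstacle, is this last comparison, because the factor $\min_MR_{g(t_0)}-\overline{R}_{g(t_0)}$ is nonpositive, so enlarging the coefficient reverses the naive inequality. The correct bound $\min_MR_{g(t_0)}\leq\max_MR_{g_0}$ comes from Proposition \ref{prop5} (not from Proposition \ref{prop4}), and combined with the nonpositivity of $\min_MR_{g(t_0)}-\overline{R}_{g(t_0)}$ it yields $\left(\min_MR_{g(t_0)}\right)\left(\min_MR_{g(t_0)}-\overline{R}_{g(t_0)}\right)\geq\left(\max_MR_{g_0}\right)\left(\min_MR_{g(t_0)}-\overline{R}_{g(t_0)}\right)$, which is precisely what closes the argument.
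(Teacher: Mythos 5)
Your proposal is correct and follows essentially the same route as the paper: the identical barrier function $F(x,t)=R_g+\left(\max_MR_{g_0}-\min_MR_{g_0}\right)-\left(\max_MR_{g_0}\right)\int_0^{t}\left(\min_MR_{g(s)}-\overline{R}_{g(s)}\right)ds+\epsilon(t+1)$, the same first-time contradiction argument using the evolution equation (\ref{4}) and $\Delta_gR_g\geq0$ at a spatial minimum, and the same closing step. In particular, you correctly identified the one delicate point, namely that the sign reversal caused by the nonpositive factor $\min_MR_{g(t_0)}-\overline{R}_{g(t_0)}$ is resolved by Proposition \ref{prop5} (via $\min_MR_{g(t_0)}\leq\max_MR_{g(t_0)}\leq\max_MR_{g_0}$), which is exactly the justification the paper invokes.
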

\begin{proof}
We only sketch the proof since it is essentially
the same as the proof of Lemma \ref{thm1}.
For $\epsilon>0$, we define the function
\begin{equation*}
\begin{split}
F(x,t)=&\,R_g(x,t)+\left(\max_MR_{g_0}-\min_MR_{g_0}\right)\\
&-\left(\max_MR_{g_0}\right)\int_0^{t}\left(\min_MR_{g(s)}-\overline{R}_{g(s)}\right)ds+\epsilon (t+1)
\end{split}
\end{equation*}
on $M\times [0,\infty)$.
We claim that $F(x,t)>\overline{R}_{g_0}$.
If it were not true, then we could find $(x_0,t_0)$ such that at $(x_0,t_0)$
\begin{equation*}
\begin{split}
0&\geq\frac{\partial F}{\partial t}=
\frac{\partial}{\partial t}R_g
-\left(\max_MR_{g_0}\right)\left(\min_MR_{g(t_0)}-\overline{R}_{g(t_0)}\right)+\epsilon\\
&=(n-1)\Delta_gR_g+R_g(R_g-\overline{R}_g)-\left(\max_MR_{g_0}\right)\left(\min_MR_{g(t_0)}-\overline{R}_{g(t_0)}\right)+\epsilon\\
&\geq R_g(R_g-\overline{R}_g)-\left(\max_MR_{g_0}\right)\left(\min_MR_{g(t_0)}-\overline{R}_{g(t_0)}\right)+\epsilon\\
&= \left(\min_MR_{g(t_0)}\right)\left(\min_MR_{g(t_0)}-\overline{R}_{g(t_0)}\right)-\left(\max_MR_{g_0}\right)\left(\min_MR_{g(t_0)}-\overline{R}_{g(t_0)}\right)+\epsilon\\
&\geq \epsilon,
\end{split}
\end{equation*}
where the last inequality follows from Proposition \ref{prop5}.
This contradicts to the assumption that $\epsilon>0$.
This contradiction shows that $F(x,t)>\overline{R}_{g_0}$.
Letting $\epsilon\to 0$, we get the desired result.
\end{proof}

Now we are ready to prove Theorem \ref{main}.

\begin{proof}[Proof of Theorem \ref{main}]
It was proved by Ye  (see Theorem 2 in \cite{Ye}) that $g\to g_\infty$ as $t\to\infty$
under the Yamabe flow (\ref{1}) such that $g_\infty$ is conformal to $g_0$ and has constant negative scalar curvature.
Along the Yamabe flow (\ref{1}), we have
$$\frac{d}{dt}\left(\int_MdV_g\right)=\int_M\frac{\partial}{\partial t}(dV_g)=-\frac{n}{2}\int_M(R_g-\overline{R}_g)dV_g=0$$
by (\ref{0}) and (\ref{3}),
which implies that
$\displaystyle\int_MdV_g=\int_MdV_{g_0}$ for all $t\geq 0$.
In particular, we have
\begin{equation}\label{20a}
\int_MdV_{g_\infty}=\int_MdV_{g_0}.
\end{equation}
On the other hand, note that  $R_{g_Y}=c^{\frac{4}{n-2}}R_{g_\infty}$
for some constant $c>0$. Indeed, one can take $c$ to be
$\displaystyle\left(R_{g_Y}/R_{g_\infty}\right)^{\frac{n-2}{4}}$.
This together with (\ref{Yamabe}) implies that the metric
$c^{\frac{4}{n-2}}g_Y$ has scalar curvature being equal to
$$R_{c^{\frac{4}{n-2}}g_Y}=c^{-\frac{4}{n-2}}R_{g_Y}=R_{g_\infty}.$$
Hence, we can conclude that
\begin{equation}\label{20b}
c^{\frac{4}{n-2}}g_Y=g_\infty
\end{equation}
using the result of Kazdan-Warner in \cite{Kazdan&Warner}  (see also  \cite{Lou}),
which says that
if $g_1$ and $g_2$ are two metrics conformal to $g_0$ such that $R_{g_1}=R_{g_2}<0$, then
$g_1=g_2$. Therefore, by (\ref{20a}) and (\ref{20b}), we have
$$\int_MdV_{g_0}=\int_MdV_{g_\infty}=\int_MdV_{c^{\frac{4}{n-2}}g_Y}=c^{\frac{2n}{n-2}}\int_MdV_{g_Y}=
c^{\frac{2n}{n-2}}\int_MdV_{g_0}$$
where the last equality follows from the assumption that $g_Y$ and $g_0$ have the same volume.
This implies that $c=1$, or equivalently,
\begin{equation}\label{20c}
g_Y=g_\infty.
\end{equation}

Note that by Proposition \ref{prop5} and Proposition \ref{prop6}, we have
\begin{equation}\label{20d}
\min_{M}R_{g_0}\leq \overline{R}_{g(t)}\leq\max_{M}R_{g_0} \mbox{ for all }t\geq 0.
\end{equation}
It follows from (\ref{20d}) and  Lemma \ref{thm1} that
\begin{equation}\label{18}
\begin{split}
&\left(\max_MR_{g_0}\right)\int_0^{t}\left(\max_MR_{g(s)}-\overline{R}_{g(s)}\right)ds\\
&\geq\left(\overline{R}_{g(t)}-\overline{R}_{g_0}\right)+\left(\max_MR_{g(t)}-\overline{R}_{g(t)}\right)-\left(\max_MR_{g_0}-\min_MR_{g_0}\right)\\
&\geq \left(\max_MR_{g(t)}-\overline{R}_{g(t)}\right)-2\left(\max_MR_{g_0}-\min_MR_{g_0}\right).
\end{split}
\end{equation}
Therefore, as $t\to\infty$,
by (\ref{18}) and Ye's result stated above that $g(t)\to g_\infty$ as $t\to\infty$, we get
\begin{equation}\label{19}
-2\left(1-\frac{\min_MR_{g_0}}{\max_MR_{g_0}}\right)\geq \int_0^{\infty}\left(\max_MR_{g(s)}-\overline{R}_{g(s)}\right)ds.
\end{equation}
Similarly, we obtain from (\ref{20d}) and Lemma \ref{thm2} that
\begin{equation*}
\begin{split}
&-\left(\max_MR_{g_0}\right)\int_0^{t}\left(\min_MR_{g(s)}-\overline{R}_{g(s)}\right)ds\\
&\geq
\left(\overline{R}_{g_0}-\overline{R}_{g(t)}\right)+
\left(\overline{R}_{g(t)}-\min_MR_{g(t)}\right)-\left(\max_MR_{g_0}-\min_MR_{g_0}\right)\\
&\geq \left(\overline{R}_{g(t)}-\min_MR_{g(t)}\right)-2\left(\max_MR_{g_0}-\min_MR_{g_0}\right).
\end{split}
\end{equation*}
Letting $t\to\infty$, we obtain
\begin{equation}\label{19a}
-2\left(1-\frac{\min_MR_{g_0}}{\max_MR_{g_0}}\right)\geq -\int_0^{\infty}\left(\min_MR_{g(s)}-\overline{R}_{g(s)}\right)ds.
\end{equation}
Integrating the  inequality in Proposition \ref{prop2}
and using (\ref{20c}), (\ref{19}) and (\ref{19a}), we get
\begin{equation*}
\begin{split}
\log\frac{\lambda_1(g_Y)}{\lambda_1(g_0)}&=\log\frac{\lambda_1(g_\infty)}{\lambda_1(g_0)}\\
&\geq-\frac{(n-2)}{2}\int_0^{\infty}\Big(\max_MR_{g(s)}-\overline{R}_{g(s)}\Big)ds
+\frac{n}{2}\int_0^{\infty}\Big(\min_MR_{g(s)}-\overline{R}_{g(s)}\Big)ds\\
&\geq 2(n-1)\left(1-\frac{\min_MR_{g_0}}{\max_MR_{g_0}}\right)
\end{split}
\end{equation*}
which gives the upper bound for $\lambda_1(g_0)$ in (\ref{20}).
We remark that the integration holds since
the Dini derivative is finite (see \cite{Hagood&Thomson} for example).
Similarly, integrating
the inequality in Proposition \ref{prop21} and using (\ref{20c}), (\ref{19}) and (\ref{19a}), we obtain
\begin{equation*}
\begin{split}
\log\frac{\lambda_1(g_Y)}{\lambda_1(g_0)}&=\log\frac{\lambda_1(g_\infty)}{\lambda_1(g_0)}\\
&\leq-\frac{(n-2)}{2}\int_0^{\infty}\Big(\min_MR_{g(s)}-\overline{R}_{g(s)}\Big)ds
+\frac{n}{2}\int_0^{\infty}\Big(\max_MR_{g(s)}-\overline{R}_{g(s)}\Big)ds\\
&\leq-2(n-1)\left(1-\frac{\min_MR_{g_0}}{\max_MR_{g_0}}\right)
\end{split}
\end{equation*}
which gives the lower bound for $\lambda_1(g_0)$ in (\ref{20}).
This proves the assertion.
\end{proof}

One can apply Theorem \ref{main} to obtain estimate of the first eigenvalue.
It was proved by Li and Yau (see Theorem 7 in \cite{Li&Yau}) that if $(M,g)$ is an $n$-dimensional
compact Riemannian manifold without boundary such that
its Ricci curvature satisfies
$Ric_g\geq(n-1)\kappa g$ where $\kappa<0$, then its first eigenvalue
satisfies
$$\lambda_1(g)\geq\frac{1}{(n-1)d^2}\exp\{-1-\sqrt{1+4(n-1)^2d^2|\kappa|}\}$$
where $d$ is the diameter of $(M,g)$.

\begin{theorem}\label{main1}
Suppose $M$ is an $n$-dimensional
compact manifold without boundary,
and $g_E$ is an Einstein metric on $M$ with
$Ric_{g_E}=(n-1)\kappa\,g_E$ where $\kappa<0$.
If $g_0$ is a Riemannian metric  conformal to $g_E$
which has negative scalar curvature and same volume as $g_E$, then
the first eigenvalue of $(M,g_0)$ satisfies
$$\lambda_1(g_0)\geq \frac{1}{(n-1)e^\frac{nc}{n-1}d(M,g_0)^2}\exp\Big\{-1-\sqrt{1+4(n-1)^2e^\frac{c}{n-1}d(M,g_0)^2|\kappa|}\Big\}$$
where $c=2(n-1)\displaystyle\left(\frac{\min_MR_{g_0}}{\max_MR_{g_0}}-1\right).$
\end{theorem}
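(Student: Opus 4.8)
The plan is to recognize that the Einstein metric $g_E$ is already a Yamabe representative of the conformal class of $g_0$, so that Theorem \ref{main} applies verbatim, and then to convert the Li--Yau estimate for $g_E$ into an estimate for $g_0$ after comparing the two diameters. First I would note that an Einstein metric has constant scalar curvature: tracing $Ric_{g_E}=(n-1)\kappa\,g_E$ gives $R_{g_E}=n(n-1)\kappa<0$. Since $g_E$ is conformal to $g_0$, has constant negative scalar curvature, and has the same volume as $g_0$, the uniqueness of such a metric (Kazdan--Warner, already invoked in the proof of Theorem \ref{main}) identifies $g_E$ with the Yamabe metric $g_Y$. As $g_0$ has negative scalar curvature, $\max_M R_{g_0}<0$, so Theorem \ref{main} applies and in particular yields the lower bound
\[
\lambda_1(g_0)\geq e^{-c}\lambda_1(g_E),\qquad c=2(n-1)\Big(\frac{\min_M R_{g_0}}{\max_M R_{g_0}}-1\Big).
\]

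Next I would apply the Li--Yau estimate stated above to $g_E$. Since $Ric_{g_E}=(n-1)\kappa\,g_E\geq (n-1)\kappa\,g_E$ with $\kappa<0$, this gives
\[
\lambda_1(g_E)\geq\frac{1}{(n-1)d(M,g_E)^2}\exp\Big\{-1-\sqrt{1+4(n-1)^2d(M,g_E)^2|\kappa|}\Big\}.
\]
The right-hand side is decreasing in $d(M,g_E)$, so to express the bound through $d(M,g_0)$ it suffices to control $d(M,g_E)$ from above by a multiple of $d(M,g_0)$.

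This diameter comparison is the heart of the argument and the step I expect to be the main obstacle, and I would obtain it by estimating the conformal factor between $g_0$ and $g_E$. Running the Yamabe flow $(\ref{1})$ from $g_0$ we have $g(t)=u(t)^{4/(n-2)}g_0$ with $u(0)=1$, and by $(\ref{20c})$ the flow converges to $g_\infty=g_Y=g_E$, so $g_E=u_\infty^{4/(n-2)}g_0$ with $u_\infty=\lim_{t\to\infty}u(t)$. Writing $(\ref{2})$ as $\frac{\partial}{\partial t}\log u=-\frac{n-2}{4}(R_g-\overline{R}_g)$ and integrating yields $\log u_\infty=-\frac{n-2}{4}\int_0^\infty(R_g-\overline{R}_g)\,ds$. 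Estimating the integrand pointwise by $\min_M R_g-\overline{R}_g\leq R_g-\overline{R}_g\leq\max_M R_g-\overline{R}_g$ and using the integral bounds $(\ref{19})$ and $(\ref{19a})$, whose common left-hand side equals $\frac{c}{n-1}$, I would conclude
\[
|\log u_\infty|\leq\frac{(n-2)c}{4(n-1)},\qquad\text{hence}\qquad u_\infty^{2/(n-2)}\leq e^{c/(2(n-1))}.
\]
Since lengths transform by $ds_{g_E}=u_\infty^{2/(n-2)}\,ds_{g_0}$, this gives $d(M,g_E)\leq e^{c/(2(n-1))}d(M,g_0)$, i.e. $d(M,g_E)^2\leq e^{c/(n-1)}d(M,g_0)^2$.

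Finally I would substitute $d(M,g_E)^2\leq e^{c/(n-1)}d(M,g_0)^2$ into the Li--Yau bound, using its monotonicity in $d$, and then multiply through by $e^{-c}$ via the first displayed inequality. The two exponential factors combine in the prefactor as $e^{-c}\cdot e^{-c/(n-1)}=e^{-nc/(n-1)}$, while the term under the square root acquires the factor $e^{c/(n-1)}$, producing exactly the asserted estimate. Beyond the diameter comparison the only delicate point is the passage $\int_0^t\to\int_0^\infty$, which is justified by Ye's convergence theorem together with the finiteness of the integrals in $(\ref{19})$ and $(\ref{19a})$.
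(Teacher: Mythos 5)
Your proposal is correct and follows essentially the same route as the paper: identify $g_E$ with the Yamabe limit of the flow via Kazdan--Warner uniqueness and the volume normalization, combine Theorem \ref{main} with the Li--Yau bound applied to $g_E$, and finish with the diameter comparison $d(M,g_E)\le e^{c/(2(n-1))}d(M,g_0)$. The only difference is cosmetic: where the paper obtains this comparison by differentiating curve lengths $L_g(\gamma)$ along the flow, bounding $\frac{d}{dt}\log L_g(\gamma)$ between $\frac12\big(\overline{R}_g-\max_MR_g\big)$ and $\frac12\big(\overline{R}_g-\min_MR_g\big)$, and integrating in time against (\ref{19}) and (\ref{19a}), you integrate the conformal-factor equation (\ref{2}) pointwise to get $|\log u_\infty|\le\frac{(n-2)c}{4(n-1)}$ and compare lengths afterwards --- the same estimate from the same inputs, derived in a slightly cleaner order.
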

\begin{proof}
As in the proof of Theorem \ref{main}, we can show that
$g\to g_\infty=g_E$ as $t\to\infty$
under the Yamabe flow (\ref{1}).
We claim that
\begin{equation}\label{22a}
e^{-\frac{c}{2(n-1)}}d(M,g_E)\leq d(M,g_0)\leq
e^{\frac{c}{2(n-1)}}d(M,g_E),
\end{equation}
where $c=\displaystyle2(n-1)\left(\frac{\min_MR_{g_0}}{\max_MR_{g_0}}-1\right)$ as in Theorem \ref{main},
$d(M,g_0)$ and $d(M,g_E)$ are the diameter of $M$ with respect to the initial metric $g_0$ and
the Einstein metric $g_E$ respectively. To see this,
we let $\gamma: [s_0,s_1]\to M$ be a differentiable curve joining $x$ and $y$ in $M$.
Consider the solution $g$ of the Yamabe flow (\ref{1}) with $g_0$ as the initial metric.
Then the length of
$\gamma$ with respect to the metric $g$ is given by
$$L_g(\gamma)=\int_{s_0}^{s_1}\sqrt{g(\frac{d\gamma}{ds},\frac{d\gamma}{ds})}\,ds.$$
Differentiate it with respect $t$, we obtain
\begin{equation}\label{21}
\begin{split}
\frac{dL_g(\gamma)}{dt}&=\frac{d}{dt}\left(\int_{s_0}^{s_1}u^{\frac{2}{n-2}}\sqrt{g_0(\frac{d\gamma}{ds},\frac{d\gamma}{ds})}\,ds\right)\\
&=\int_{s_0}^{s_1}\frac{2}{n-2}u^{\frac{2}{n-2}-1}\frac{\partial u}{\partial t}\sqrt{g_0(\frac{d\gamma}{ds},\frac{d\gamma}{ds})}\,ds\\
&=\frac{1}{2}\int_{s_0}^{s_1}(\overline{R}_g-R_g)\sqrt{g(\frac{d\gamma}{ds},\frac{d\gamma}{ds})}\,ds\\
&\leq \frac{1}{2}\left(\overline{R}_g-\min_MR_g\right)L_g(\gamma)
\end{split}
\end{equation}
where we have used (\ref{2}) and the fact that $g=u^{\frac{4}{n-2}}g_0$. Similarly, we can get
\begin{equation}\label{22}
\frac{dL_g(\gamma)}{dt}\geq\frac{1}{2}\left(\overline{R}_g-\max_MR_g\right)L_g(\gamma).
\end{equation}
Integrating (\ref{21}) and (\ref{22}) from $0$ to $\infty$, we obtain
$$\frac{1}{2}\int_0^{\infty}\left(\overline{R}_{g(t)}-\max_MR_{g(t)}\right)dt\leq \log\frac{L_{g_E}(\gamma)}{L_{g_0}(\gamma)}\leq \frac{1}{2}\int_0^{\infty}\left(\overline{R}_{g(t)}-\min_MR_{g(t)}\right)dt.$$
Combining this with (\ref{19}) and (\ref{19a}), we obtain
$$\left(1-\frac{\min_MR_{g_0}}{\max_MR_{g_0}}\right)\leq \log\frac{L_{g_E}(\gamma)}{L_{g_0}(\gamma)}\leq -\left(1-\frac{\min_MR_{g_0}}{\max_MR_{g_0}}\right).$$
This implies
$$\left(1-\frac{\min_MR_{g_0}}{\max_MR_{g_0}}\right)\leq \log\frac{d(M,g_E)}{d(M,g_0)}\leq -\left(1-\frac{\min_MR_{g_0}}{\max_MR_{g_0}}\right)$$
which proves the claim (\ref{22a}).

By the assumption and the result of Li and Yau mentioned above, we have
$$\lambda_1(g_E)\geq\frac{1}{(n-1)d(M,g_E)^2}\exp\Big\{-1-\sqrt{1+4(n-1)^2d(M,g_\infty)^2|\kappa|}\Big\}.$$
Combining this with (\ref{22a}) and Theorem \ref{main}, we obtain
$$e^{c}\lambda_1(g_0)\geq\frac{1}{(n-1)e^\frac{c}{n-1}d(M,g_0)^2}\exp\Big\{-1-\sqrt{1+4(n-1)^2e^\frac{c}{n-1}d(M,g_0)^2|\kappa|}\Big\}.$$
This proves the assertion.
\end{proof}

\section{The unnormalized Yamabe flow on manifolds without boundary}

Now we consider the unnormalized Yamabe flow on an $n$-dimensional compact Riemannian manifold $(M,g_0)$ without boundary,
which is defined as
\begin{equation}\label{C1}
\frac{\partial}{\partial t}g=-R_gg\mbox{ for }t\geq 0,\hspace{2mm}g|_{t=0}=g_0.
\end{equation}
If we write $g=u^{\frac{4}{n-2}}g_0$ for some $0<u\in C^\infty(M)$, then $u$ satisfies the following evolution equation:
\begin{equation}\label{C2}
\frac{\partial u}{\partial t}=-\frac{n-2}{4}R_gu\mbox{ for }t\geq 0,\hspace{2mm}u|_{t=0}=1.
\end{equation}
Hence, the volume form $dV_g$ and the scalar curvature $R_g$ of $g$ satisfy (see \cite{Chow})
\begin{eqnarray}\label{C3}
&&\frac{\partial}{\partial t}dV_g=-\frac{n}{2}R_gdV_g,\\
\label{C4}
&&\frac{\partial}{\partial t}R_g=(n-1)\Delta_gR_g+R_g^2.
\end{eqnarray}

Let $\lambda_1$ be the first eigenvalue of $-\Delta_g+aR_g$ where $a$ is a constant, i.e.
\begin{equation}\label{C5}
-\Delta_g f+aR_gf=\lambda_1 f
\end{equation}
for some function $f$. By the eigenvalue perturbation theory
(see \cite{Reed&Simon} and also \cite{Cao,Kato,Kleiner&Lott}), we may assume that
there is a family of the first eigenvalue and the corresponding eigenfunction
which is $C^1$ in $t$.
By rescaling, we may assume that the eigenfunction $f$ satisfies
\begin{equation}\label{C6}
\int_Mf^2dV_g=1.
\end{equation}

\begin{prop}\label{propC1}
Along the unnormalized Yamabe flow (\ref{C1}), we have
\begin{equation*}
\begin{split}
\frac{d\lambda_1}{dt}&=\left(2(n-1)a-\frac{n-2}{2}\right)\int_MR_g(|\nabla_{g} f|^2_{g}+aR_gf^2)dV_{g}\\
&\hspace{4mm}
-\left(2(n-1)a-\frac{n}{2}\right)\lambda_1\int_MR_gf^2dV_g.
\end{split}
\end{equation*}
\end{prop}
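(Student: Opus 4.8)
The plan is to start from the variational expression for the eigenvalue. Multiplying the eigenvalue equation (\ref{C5}) by $f$, integrating over $M$ and integrating by parts, the normalization (\ref{C6}) gives
$$\lambda_1 = \int_M\big(|\nabla_g f|^2_g + aR_g f^2\big)\,dV_g.$$
Since the perturbation theory guarantees that $\lambda_1$ and $f$ are $C^1$ in $t$, I would differentiate this identity directly in $t$, carefully tracking the variation of every geometric quantity appearing in it.

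First I would record the variations. Because $\partial_t g = -R_g g$ by (\ref{C1}), the inverse metric varies by $\partial_t g^{ij} = R_g g^{ij}$, so $\partial_t|\nabla_g f|^2_g = R_g|\nabla_g f|^2_g + 2\langle \nabla_g f, \nabla_g(\partial_t f)\rangle_g$; the volume form varies by (\ref{C3}); and $R_g$ by (\ref{C4}). Substituting these and integrating the cross term by parts converts $2\langle \nabla_g f, \nabla_g(\partial_t f)\rangle_g$ into $-2(\Delta_g f)(\partial_t f)$. Collecting everything leaves four kinds of integrands: $R_g|\nabla_g f|^2_g$, $R_g^2 f^2$, the curvature--Laplacian term $a(n-1)(\Delta_g R_g)f^2$, and the terms carrying the unknown time derivative $\partial_t f$.

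The substantive step is eliminating $\partial_t f$, for which there is no explicit formula; I would handle it exactly as in Perelman's and Cao's monotonicity computations. The two $\partial_t f$ contributions combine into $2\int_M(-\Delta_g f + aR_g f)(\partial_t f)\,dV_g = 2\lambda_1\int_M f(\partial_t f)\,dV_g$ by the eigenvalue equation (\ref{C5}), and then differentiating the normalization (\ref{C6}) and using (\ref{C3}) yields $\int_M f(\partial_t f)\,dV_g = \tfrac{n}{4}\int_M R_g f^2\,dV_g$, so this block becomes $\tfrac{n}{2}\lambda_1\int_M R_g f^2\,dV_g$ with no derivative of $f$ remaining. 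For the curvature--Laplacian term I would integrate by parts onto $f^2$, expand $\Delta_g(f^2)=2f\Delta_g f + 2|\nabla_g f|^2_g$, and substitute $\Delta_g f = aR_g f - \lambda_1 f$ from (\ref{C5}); this produces multiples of $\int_M R_g|\nabla_g f|^2_g$, of $\int_M R_g^2 f^2$, and of $\lambda_1\int_M R_g f^2$.

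Finally I would collect coefficients. Using $1-\tfrac{n}{2} = -\tfrac{n-2}{2}$, the $R_g|\nabla_g f|^2_g$ integrand acquires the factor $2(n-1)a - \tfrac{n-2}{2}$, the $R_g^2 f^2$ integrand the factor $a\big(2(n-1)a - \tfrac{n-2}{2}\big)$, and the $\lambda_1 R_g f^2$ integrand the factor $-\big(2(n-1)a - \tfrac{n}{2}\big)$. Regrouping the first two into $\big(2(n-1)a - \tfrac{n-2}{2}\big)\int_M R_g\big(|\nabla_g f|^2_g + aR_g f^2\big)\,dV_g$ gives exactly the stated formula. The only delicate point beyond the bookkeeping is the removal of $\partial_t f$, which rests on the self-adjointness of $-\Delta_g + aR_g$ together with the normalization constraint.
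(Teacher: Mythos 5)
Your proposal is correct and follows essentially the same route as the paper: differentiate the Rayleigh identity $\lambda_1=\int_M(|\nabla_g f|^2_g+aR_gf^2)\,dV_g$, eliminate $\partial_t f$ by combining self-adjointness of $-\Delta_g+aR_g$ with the differentiated normalization $\int_M f\,\partial_t f\,dV_g=\tfrac{n}{4}\int_M R_gf^2\,dV_g$, move $\Delta_gR_g$ onto $f^2$ by parts and substitute the eigenvalue equation, then collect coefficients. The only cosmetic difference is that you vary the metric directly via $\partial_t g^{ij}=R_gg^{ij}$, whereas the paper first rewrites the Dirichlet energy as $\int_M u^2|\nabla_{g_0}f|^2_{g_0}\,dV_{g_0}$ over the fixed background metric and differentiates $u$; both yield the identical intermediate expression.
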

\begin{proof}
Differentiate (\ref{C6}) with respect to $t$, we have
\begin{equation}\label{C7}
\int_Mf\frac{\partial f}{\partial t}dV_g=\frac{n}{4}\int_MR_gf^2dV_g
\end{equation}
by (\ref{C3}).
Multiply (\ref{C5}) by $f$ and integrate it over $M$, we obtain
\begin{equation}\label{C8}
\lambda_1=\int_M(|\nabla_g f|^2_g+aR_gf^2)dV_g
\end{equation}
by (\ref{C6}) and integration by parts. Since $g=u^{\frac{4}{n-2}}g_0$,
\eqref{9} holds.
Combining (\ref{9}) and (\ref{C8}), we have
$$\lambda_1=\int_Mu^2|\nabla_{g_0} f|^2_{g_0}dV_{g_0}+a\int_MR_gf^2dV_g.$$
Differentiate it with respect to $t$, we obtain
\begin{equation*}
\begin{split}
\frac{d\lambda_1}{dt}&=\int_M2u\frac{\partial u}{\partial t}|\nabla_{g_0} f|^2_{g_0}dV_{g_0}
+2\int_Mu^2\langle\nabla_{g_0} f,\nabla_{g_0}(\frac{\partial f}{\partial t})\rangle_{g_0}dV_{g_0}\\
&\hspace{4mm}+a\int_Mf^2\frac{\partial R_g}{\partial t}dV_g+2a\int_MR_gf\frac{\partial f}{\partial t}dV_g
+a\int_MR_gf^2\frac{\partial}{\partial t}(dV_g)\\
&=-\frac{n-2}{2}\int_MR_g|\nabla_{g} f|^2_{g}dV_{g}
+2\int_M\langle\nabla_{g} f,\nabla_{g}(\frac{\partial f}{\partial t})\rangle_{g}dV_{g}\\
&\hspace{4mm}+a\int_Mf^2\big((n-1)\Delta_gR_g+R_g^2\big)dV_g+2a\int_MR_gf\frac{\partial f}{\partial t}dV_g
-\frac{n}{2}a\int_MR_g^2f^2dV_g\\
&=-\frac{n-2}{2}\int_MR_g|\nabla_{g} f|^2_{g}dV_{g}
+2\int_M\frac{\partial f}{\partial t}\big(-\Delta_gf+aR_gf\big)dV_g\\
&\hspace{4mm}+(n-1)a\int_MR_g\Delta_g(f^2)dV_g
-\frac{n-2}{2}a\int_MR_g^2f^2dV_g\\
&=\left(2(n-1)a-\frac{n-2}{2}\right)\int_MR_g|\nabla_{g} f|^2_{g}dV_{g}
+2\lambda_1\int_Mf\frac{\partial f}{\partial t}dV_g\\
&\hspace{4mm}-2(n-1)a\int_MR_gf\big(-\Delta_gf+aR_gf\big)dV_g\\
&\hspace{4mm}
+\left(2(n-1)a^2-\frac{n-2}{2}a\right)\int_MR_g^2f^2dV_g\\
&=\left(2(n-1)a-\frac{n-2}{2}\right)\int_MR_g(|\nabla_{g} f|^2_{g}+aR_gf^2)dV_{g}\\
&\hspace{4mm}
-\left(2(n-1)a-\frac{n}{2}\right)\lambda_1\int_MR_gf^2dV_g
\end{split}
\end{equation*}
where the second equality follows from  (\ref{9}) and (\ref{C2})-(\ref{C4}),
the third equality follows from integration by parts, and the last two equalities follow from (\ref{C5}) and (\ref{C7}).
This proves the assertion.
\end{proof}

\begin{prop}\label{propC2}
If $0\leq a\leq\displaystyle\frac{n-2}{4(n-1)}$ and
$\displaystyle\min_MR_g\geq\frac{n-2}{n}\max_MR_g\geq 0$, then
$\displaystyle\frac{d\lambda_1}{dt}\geq 0$, and equality holds if and only if $R_g$ is constant.
\end{prop}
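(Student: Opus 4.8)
The plan is to start from the exact formula for $\frac{d\lambda_1}{dt}$ furnished by Proposition \ref{propC1} and show that, under the stated hypotheses on $a$ and on the scalar curvature, both terms on the right-hand side are nonnegative. Recall that formula reads
\begin{equation*}
\frac{d\lambda_1}{dt}=\left(2(n-1)a-\frac{n-2}{2}\right)\int_MR_g(|\nabla_{g} f|^2_{g}+aR_gf^2)dV_{g}
-\left(2(n-1)a-\frac{n}{2}\right)\lambda_1\int_MR_gf^2dV_g.
\end{equation*}
The first thing I would record is the sign of each scalar coefficient under the assumption $0\leq a\leq\frac{n-2}{4(n-1)}$: the quantity $2(n-1)a-\frac{n-2}{2}$ is $\leq 0$ (it vanishes exactly at the endpoint $a=\frac{n-2}{4(n-1)}$), while $2(n-1)a-\frac{n}{2}$ is strictly negative since $2(n-1)a\leq\frac{n-2}{2}<\frac{n}{2}$. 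So the second term is a positive constant times $\lambda_1\int_MR_gf^2dV_g$, and the first term is a nonpositive constant times $\int_MR_g(|\nabla_g f|^2_g+aR_gf^2)dV_g$.

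Next I would control the two integrals using the curvature pinching $0\leq\frac{n-2}{n}\max_MR_g\leq\min_MR_g\leq R_g\leq\max_MR_g$. Since $R_g\geq 0$ everywhere and $a\geq 0$, the integrand $R_g(|\nabla_g f|^2_g+aR_gf^2)$ is pointwise nonnegative, so the first integral is $\geq 0$; multiplied by the nonpositive coefficient, the first term of $\frac{d\lambda_1}{dt}$ is a nonpositive contribution. To offset this I need the second term to dominate. The natural approach is to bound the first integral from above by replacing $R_g$ by $\max_MR_g$ and to bound $\int_MR_gf^2dV_g$ from below by replacing $R_g$ by $\min_MR_g$; combined with $\lambda_1=\int_M(|\nabla_g f|^2_g+aR_gf^2)dV_g$ from (\ref{C8}), this should convert the inequality $\frac{d\lambda_1}{dt}\geq 0$ into the pinching condition $\min_MR_g\geq\frac{n-2}{n}\max_MR_g$. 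Concretely, writing $A=\int_M|\nabla_g f|^2_gdV_g$ and $B=\int_MR_gf^2dV_g$, the estimates yield a lower bound for $\frac{d\lambda_1}{dt}$ of the form (positive constant)$\cdot\lambda_1\cdot\min_MR_g\cdot\|f\|^2$ minus (the nonpositive-coefficient term bounded using $\max_MR_g$), and the numerology $\frac{n-2}{n}$ is exactly what makes the difference nonnegative.

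The step I expect to be the main obstacle is the equality discussion. For the inequality itself the coefficient signs plus pointwise positivity do most of the work, but tracking \emph{when} equality holds requires care: equality in the bound $R_g\leq\max_MR_g$ (or $R_g\geq\min_MR_g$) against a weight that is not identically zero forces $R_g$ to equal the relevant extreme value on the support of that weight, and one must argue that the eigenfunction $f$ (and hence $|\nabla_g f|^2_g$ or $f^2$) cannot vanish on a set large enough to evade this conclusion. The cleanest route is to observe that if $\frac{d\lambda_1}{dt}=0$ then each nonnegative term must vanish separately, trace that back to $R_g$ being forced constant via the strict sign of $2(n-1)a-\frac{n}{2}$ and the fact that a first eigenfunction does not vanish on an open set, and conversely note that if $R_g$ is constant then $R_g=\overline{R}_g$ makes both integrals collapse to multiples of $\int_Mf^2dV_g=1$ and the two terms cancel by (\ref{C8}). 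I would present the forward inequality first, then devote a short final paragraph to the equality characterization.
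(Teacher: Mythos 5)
The inequality half of your proposal is correct and is essentially the paper's own argument: the same sign analysis of the two coefficients ($\frac{n}{2}-2(n-1)a\geq\frac{n-2}{2}-2(n-1)a\geq 0$), the same pointwise bounds $R_g\leq\max_MR_g$ against the nonnegative weight $|\nabla_gf|^2_g+aR_gf^2$ and $R_g\geq\min_MR_g$ against $f^2$, and the same use of (\ref{C6}) and (\ref{C8}) to reach $\frac{d\lambda_1}{dt}\geq\lambda_1\left(\frac{n}{2}\min_MR_g-\frac{n-2}{2}\max_MR_g\right)\geq 0$. (One small point you share with the paper: multiplying $\int_MR_gf^2dV_g\geq\min_MR_g$ by $\lambda_1$ requires $\lambda_1\geq 0$, which does follow from (\ref{C8}) because $a\geq 0$ and $R_g\geq 0$, but should be said.)

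The genuine gap is in your equality discussion, specifically in the direction ``$R_g$ constant $\Rightarrow$ equality.'' You claim that if $R_g$ is constant then ``$R_g=\overline{R}_g$'' makes the two integrals collapse and the two terms cancel. But Proposition \ref{propC2} concerns the \emph{unnormalized} flow (\ref{C1}); no average $\overline{R}_g$ appears anywhere in Proposition \ref{propC1}, and the two terms do not cancel for constant scalar curvature. Indeed, if $R_g\equiv c$ then (\ref{C6}) and (\ref{C8}) reduce the formula of Proposition \ref{propC1} to
\begin{equation*}
\frac{d\lambda_1}{dt}=\left(2(n-1)a-\frac{n-2}{2}\right)c\lambda_1-\left(2(n-1)a-\frac{n}{2}\right)c\lambda_1=c\lambda_1,
\end{equation*}
because the two coefficients differ by exactly $\frac{n}{2}-\frac{n-2}{2}=1$. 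So a nonzero constant scalar curvature gives \emph{strict} increase whenever $\lambda_1>0$, and equality forces $c\lambda_1=0$. Your forward direction is fine as far as it goes (saturation of $\int_MR_gf^2dV_g\geq\min_MR_g\int_Mf^2dV_g$ together with the fact that a first eigenfunction is nowhere zero forces $R_g\equiv\min_MR_g$), but pushed one step further it shows the constant must vanish: the honest equality case here, at least when $\lambda_1\neq 0$, is $R_g\equiv 0$, exactly as in Propositions \ref{propC4} and \ref{propC3}. To be fair, the paper's own statement ``equality holds if and only if $R_g$ is constant'' is loose in the same way, and its proof is silent on the point (``From this, the assertion follows''); but your proposed cancellation mechanism imports the normalized flow and would not survive being written out.
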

\begin{proof}
If $a\leq\displaystyle\frac{n-2}{4(n-1)}$, then
$\displaystyle\frac{n}{2}-2(n-1)a\geq\frac{n-2}{2}-2(n-1)a\geq 0.$
Combining this with Proposition \ref{propC1}, we obtain
\begin{equation*}
\begin{split}
\frac{d\lambda_1}{dt}
&\geq-\left(\frac{n-2}{2}-2(n-1)a\right)\Big(\max_MR_g\Big)\int_M(|\nabla_{g} f|^2_{g}+aR_gf^2)dV_{g}\\
&\hspace{4mm}
+\left(\frac{n}{2}-2(n-1)a\right)\Big(\min_MR_g\Big)\lambda_1\int_Mf^2dV_g\\
&=\lambda_1\left(\frac{n}{2}\min_MR_g-\frac{n-2}{2}\max_MR_g\right)
\end{split}
\end{equation*}
where we have used (\ref{C6}) and (\ref{C8}) in the last equality.  From this, the assertion follows.
\end{proof}

\begin{prop}\label{propC4}
If $\displaystyle \frac{n-2}{4(n-1)}\leq a\leq \frac{n}{4(n-1)}$ and
$\displaystyle\min_MR_g\geq 0$, then
$\displaystyle\frac{d\lambda_1}{dt}\geq 0$
and equality holds if and only if $R_g\equiv 0$.
\end{prop}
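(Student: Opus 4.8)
The plan is to start from the exact identity for $\frac{d\lambda_1}{dt}$ recorded in Proposition \ref{propC1} and simply read off the signs of its two prefactors under the hypothesis $\frac{n-2}{4(n-1)}\leq a\leq\frac{n}{4(n-1)}$. The lower bound $a\geq\frac{n-2}{4(n-1)}$ gives $2(n-1)a-\frac{n-2}{2}\geq 0$, while the upper bound $a\leq\frac{n}{4(n-1)}$ gives $\frac{n}{2}-2(n-1)a\geq 0$; thus both coefficients occurring in Proposition \ref{propC1} are nonnegative. This is the structural observation that makes the whole statement work, and it is the natural companion of the sign analysis already used in Proposition \ref{propC2}.

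Next I would collect the positivity facts supplied by $\min_MR_g\geq 0$. First, $R_g\geq 0$ everywhere on $M$. Second, since $a\geq\frac{n-2}{4(n-1)}>0$ for $n\geq 3$, the integrand $R_g(|\nabla_g f|^2_g+aR_gf^2)$ is pointwise nonnegative. Third, by (\ref{C8}) we have $\lambda_1=\int_M(|\nabla_g f|^2_g+aR_gf^2)dV_g\geq 0$, and likewise $\int_MR_gf^2dV_g\geq 0$. Substituting these into Proposition \ref{propC1} expresses $\frac{d\lambda_1}{dt}$ as a sum of two manifestly nonnegative terms, which yields $\frac{d\lambda_1}{dt}\geq 0$.

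For the equality statement I would use that the first eigenfunction $f$ of the Schr\"odinger operator $-\Delta_g+aR_g$ may be chosen strictly positive on $M$. Suppose $\frac{d\lambda_1}{dt}=0$; then both nonnegative terms vanish separately. When $a<\frac{n}{4(n-1)}$ the second prefactor is strictly positive, so $\lambda_1\int_MR_gf^2dV_g=0$; whether $\lambda_1=0$ or $\int_MR_gf^2dV_g=0$, the identity $\lambda_1=\int_M|\nabla_g f|^2_gdV_g+a\int_MR_gf^2dV_g$ together with $f>0$, $R_g\geq 0$, $a>0$ forces $\int_MR_gf^2dV_g=0$, hence $R_g\equiv 0$. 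At the endpoint $a=\frac{n}{4(n-1)}$ the second term vanishes identically, but the first prefactor equals $1$, so vanishing of the first term gives in particular $\int_MR_g^2f^2dV_g=0$, and again $f>0$ forces $R_g\equiv 0$. The converse is immediate, since every integral in Proposition \ref{propC1} carries a factor of $R_g$.

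The main obstacle is the equality characterization rather than the inequality: the inequality is pure sign-bookkeeping, but to reach the sharp conclusion $R_g\equiv 0$ (as opposed to merely $R_g$ constant, which is what the $\max$/$\min$ estimates in Proposition \ref{propC2} produce) one must exploit that the vanishing integrals are weighted by the strictly positive principal eigenfunction. This is precisely why I would invoke positivity of the first eigenfunction. An alternative, if one wishes to avoid citing that positivity, is to note that vanishing of the first term forces $f\equiv 0$ on the open set $\{R_g>0\}$ and then apply unique continuation to the eigenvalue equation $\Delta_g f=(aR_g-\lambda_1)f$, contradicting $f\not\equiv 0$; but the positive-eigenfunction route is cleaner and I would take it.
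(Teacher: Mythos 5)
Your proof is correct, and its core---starting from the identity of Proposition \ref{propC1} and reading off that $a\geq\frac{n-2}{4(n-1)}$ makes the coefficient $2(n-1)a-\frac{n-2}{2}$ nonnegative while $a\leq\frac{n}{4(n-1)}$ makes $\frac{n}{2}-2(n-1)a$ nonnegative---is exactly the paper's. The differences are in the follow-through. For the inequality, you observe that each of the two terms is already nonnegative (using $R_g\geq 0$, $a>0$, and $\lambda_1\geq 0$ from \eqref{C8}), whereas the paper inserts $\min_MR_g$ into both integrals and uses \eqref{C6}, \eqref{C8} together with the identity $\bigl(2(n-1)a-\frac{n-2}{2}\bigr)+\bigl(\frac{n}{2}-2(n-1)a\bigr)=1$ to get the slightly stronger quantitative bound $\frac{d\lambda_1}{dt}\geq\lambda_1\min_MR_g$; either route gives monotonicity. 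Where you genuinely add value is the equality case: the paper's proof ends with ``From this, the assertion follows,'' but $\frac{d\lambda_1}{dt}=0$ combined with $\frac{d\lambda_1}{dt}\geq\lambda_1\min_MR_g\geq 0$ only yields $\lambda_1\min_MR_g=0$, which by itself does not give $R_g\equiv 0$; one must return to the term-by-term vanishing and use, as you do, that the weight $f$ is the principal eigenfunction of the Schr\"odinger operator $-\Delta_g+aR_g$ and hence may be taken strictly positive (or argue via the measure-zero nodal set and unique continuation). Your case split at the endpoint $a=\frac{n}{4(n-1)}$---where the second term is identically zero but the first prefactor equals $1$, forcing $\int_MR_g^2f^2\,dV_g=0$---correctly covers the whole admissible range, and your converse is immediate since every term carries a factor of $R_g$. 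In short: same approach for the monotonicity, with your write-up supplying the equality analysis that the paper leaves implicit.
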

\begin{proof}
If $\displaystyle \frac{n-2}{4(n-1)}\leq a\leq \frac{n}{4(n-1)}$, then
$\displaystyle2(n-1)a-\frac{n-2}{2}\geq 0$
and
$\displaystyle\frac{n}{2}-2(n-1)a\geq 0$. Combining this with Proposition \ref{propC1}, we get
\begin{equation*}
\begin{split}
\frac{d\lambda_1}{dt}&\geq\left(2(n-1)a-\frac{n-2}{2}\right)\Big(\min_MR_g\Big)\int_M(|\nabla_{g} f|^2_{g}+aR_gf^2)dV_{g}\\
&\hspace{4mm}
+\left(\frac{n}{2}-2(n-1)a\right)\Big(\min_MR_g\Big)\lambda_1\int_Mf^2dV_g\\
&=\lambda_1\Big(\min_MR_g\Big)
\end{split}
\end{equation*}
where we have used (\ref{C6}) and (\ref{C8}).
From this, the assertion follows.
\end{proof}

\begin{prop}\label{propC3}
If $a\geq\displaystyle\frac{n}{4(n-1)}$ and
$\displaystyle\min_MR_g\geq 0$, then
$\displaystyle\frac{d\lambda_1}{dt}\geq 0$
and equality holds if and only if $R_g\equiv 0$.
\end{prop}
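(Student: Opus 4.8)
The plan is to begin from the exact expression for $\frac{d\lambda_1}{dt}$ supplied by Proposition \ref{propC1} and to read off the signs of its two coefficients in the regime $a\geq\frac{n}{4(n-1)}$. Writing $\alpha=2(n-1)a-\frac{n-2}{2}$ and $\beta=2(n-1)a-\frac{n}{2}$, the assumption $a\geq\frac{n}{4(n-1)}$ forces $\beta\geq 0$, and hence $\alpha=\beta+1\geq 1>0$. Since $\min_MR_g\geq 0$ makes the integrand $R_g(|\nabla_g f|^2_g+aR_gf^2)$ nonnegative pointwise, the first term $\alpha\int_M R_g(|\nabla_g f|^2_g+aR_gf^2)\,dV_g$ is $\geq 0$. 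The new feature, compared with Propositions \ref{propC2} and \ref{propC4}, is that here $\beta\geq 0$ rather than $\leq 0$, so the term $-\beta\lambda_1\int_M R_gf^2\,dV_g$ is now nonpositive and can no longer simply be dropped; controlling it is the entire content of the proposition.

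The key tool I would use is the Cauchy--Schwarz inequality in the form $\int_M R_g^2f^2\,dV_g\geq (\int_M R_gf^2\,dV_g)^2$, which is valid because $\int_M f^2\,dV_g=1$ by \eqref{C6}. Substituting $\lambda_1=\int_M(|\nabla_g f|^2_g+aR_gf^2)\,dV_g$ from \eqref{C8} into the negative term and regrouping, the contributions that are quadratic in $\int_M R_gf^2\,dV_g$ combine with coefficient $\alpha-\beta=1$, so the Cauchy--Schwarz bound lets the positive piece $\alpha a\int_M R_g^2f^2\,dV_g$ dominate $\beta a(\int_M R_gf^2\,dV_g)^2$ with a nonnegative surplus of $a(\int_M R_gf^2\,dV_g)^2$. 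This is the mechanism that handles the large-$a$ regime; it is consistent with the fact that when $R_g$ is constant the formula of Proposition \ref{propC1} collapses, via $\alpha-\beta=1$, to $\frac{d\lambda_1}{dt}=\lambda_1 R_g\geq 0$.

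The main obstacle is the leftover mixed term: after the reduction above one is left needing $\alpha\int_M R_g|\nabla_g f|^2_g\,dV_g+a(\int_M R_gf^2\,dV_g)^2\geq\beta(\int_M|\nabla_g f|^2_g\,dV_g)(\int_M R_gf^2\,dV_g)$. I would attack this by exploiting $R_g\geq 0$, for instance through $\int_M R_g|\nabla_g f|^2_g\,dV_g\geq(\min_MR_g)\int_M|\nabla_g f|^2_g\,dV_g$, and, if that elementary estimate is insufficient, by feeding the eigenvalue equation \eqref{C5} back in (multiplying by $R_gf$ and integrating by parts) to relate $\int_M R_g|\nabla_g f|^2_g\,dV_g$ to $\int_M R_g^2f^2\,dV_g$ and $\int_M R_gf^2\,dV_g$. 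I expect this gradient cross-term to be the delicate point, precisely because there is no unconditional correlation inequality between $|\nabla_g f|^2_g$ and $R_g$; it is the concentration of the ground state away from large values of $R_g$ that should ultimately make the inequality hold.

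Finally, for the equality assertion I would track the saturation of each step: the first term vanishes only if $R_g|\nabla_g f|^2_g\equiv 0$ and $R_g^2f^2\equiv 0$, while the Cauchy--Schwarz step is an equality only when $R_g$ is constant on the support of $f^2$. Combining these with \eqref{C5} and $f\not\equiv 0$ should force $R_g\equiv 0$, which is the equality case claimed.
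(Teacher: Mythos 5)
You have correctly identified the crux, but your proposal does not prove it, so there is a genuine gap. Write $E=\int_M|\nabla_g f|^2_g\,dV_g$, $P=\int_MR_g|\nabla_g f|^2_g\,dV_g$, $S=\int_MR_gf^2\,dV_g$, $Q=\int_MR_g^2f^2\,dV_g$, and $\alpha=2(n-1)a-\tfrac{n-2}{2}$, $\beta=2(n-1)a-\tfrac{n}{2}$. All of your reductions are valid: $\beta\geq0$, $\alpha=\beta+1$, $\lambda_1=E+aS\geq0$ by \eqref{C8}, and $Q\geq S^2$ by Cauchy--Schwarz and \eqref{C6}, whence
\begin{equation*}
\frac{d\lambda_1}{dt}=\alpha(P+aQ)-\beta\lambda_1 S\geq\alpha P+aS^2-\beta ES.
\end{equation*}
But the remaining inequality $\alpha P+aS^2\geq\beta ES$ is precisely what you leave unproved, and neither of your proposed repairs can close it. With $\min_MR_g=0$ (which the hypothesis permits), the bound $P\geq(\min_MR_g)E$ gives nothing, and you would then need $aS^2\geq\beta ES$, i.e. $aS\geq\beta E$; since $\beta/a\to 2(n-1)$ as $a\to\infty$, nothing in the hypotheses forces the eigenfunction to satisfy this. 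Your second suggestion, multiplying \eqref{C5} by $R_gf$ and integrating by parts, yields $P+aQ=\lambda_1S+\tfrac12\int_Mf^2\Delta_gR_g\,dV_g$, hence $\frac{d\lambda_1}{dt}=\lambda_1S+\tfrac{\alpha}{2}\int_Mf^2\Delta_gR_g\,dV_g$, which merely exchanges the problem for the sign of $\int_Mf^2\Delta_gR_g\,dV_g$, a quantity the hypotheses $a\geq\tfrac{n}{4(n-1)}$ and $R_g\geq0$ do not control. So the ``delicate point'' you flag is real and remains open in your write-up, and the equality discussion is moot until it is settled.

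You should also know that the paper's own proof takes a cruder route and, read literally, fails at exactly the same spot. It bounds the first term below by $\alpha(\min_MR_g)\lambda_1$ (via \eqref{C8}) and the second below by $-\beta(\max_MR_g)\lambda_1$ (via \eqref{C6}), which correctly gives $\frac{d\lambda_1}{dt}\geq\lambda_1\left(\alpha\min_MR_g-\beta\max_MR_g\right)$, and then asserts that this equals $\lambda_1\left(\tfrac{n}{2}\max_MR_g-\tfrac{n-2}{2}\min_MR_g\right)$. In fact
\begin{equation*}
\alpha\min_MR_g-\beta\max_MR_g=2(n-1)a\left(\min_MR_g-\max_MR_g\right)+\tfrac{n}{2}\max_MR_g-\tfrac{n-2}{2}\min_MR_g,
\end{equation*}
and the $a$-dependent term cancels only when both estimates use the same extremum of $R_g$ (as happens in Proposition \ref{propC4}); here it is nonpositive and dominates for large $a$ whenever $R_g$ is nonconstant, so the paper's displayed ``equality'' silently discards a term of the wrong sign. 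In other words, your instinct is vindicated: controlling $-\beta\lambda_1\int_MR_gf^2dV_g$ when $\beta$ is large is the true obstruction, and neither your Cauchy--Schwarz route nor the paper's min/max estimates legitimately overcome it. A complete proof appears to need extra input---for instance a pinching hypothesis ensuring $\alpha\min_MR_g\geq\beta\max_MR_g$, or genuine ground-state information such as $\int_Mf^2\Delta_gR_g\,dV_g\geq0$.
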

\begin{proof}
If $a\geq\displaystyle\frac{n}{4(n-1)}$, then
$\displaystyle2(n-1)a-\frac{n-2}{2}\geq2(n-1)a-\frac{n}{2}\geq 0.$
Combining this with Proposition \ref{propC1}, we obtain
\begin{equation*}
\begin{split}
\frac{d\lambda_1}{dt}
&\geq\left(2(n-1)a-\frac{n-2}{2}\right)\Big(\min_MR_g\Big)\int_M(|\nabla_{g} f|^2_{g}+aR_gf^2)dV_{g}\\
&\hspace{4mm}
-\left(2(n-1)a-\frac{n}{2}\right)\Big(\max_MR_g\Big)\lambda_1\int_Mf^2dV_g\\
&=\lambda_1\left(\frac{n}{2}\max_MR_g-\frac{n-2}{2}\min_MR_g\right)
\end{split}
\end{equation*}
where the last equality follows from (\ref{C6}) and (\ref{C8}). From this, the assertion follows.
\end{proof}

\begin{proof}[Proof of Theorem \ref{Mmain}]
This follows from Proposition \ref{propC2}-\ref{propC3}.
\end{proof}

\section{The Yamabe flow on manifolds with boundary}

Throughout this section, we assume that $(M,g_0)$ is a compact Riemannian manifold of dimension $n\geq 3$ with smooth boundary $\partial M$.
Up to a conformal change, we may assume that the mean curvature of $g_0$
on $\partial M$ vanishes. See Lemma 2.1 in \cite{Brendle1} for the proof.
We consider the Yamabe flow, which is defined as
\begin{equation}\label{B1}
\frac{\partial}{\partial t}g=-(R_g-\overline{R}_g)g\mbox{ in }M\mbox{ and }H_g=0 \hbox{ on }\partial M
\mbox{ for }t\geq 0,\hspace{2mm}g|_{t=0}=g_0.
\end{equation}
Here  $H_g$ is the mean curvature of $g$ with respect to the outward unit normal $\nu_g$.
If we write $g=u^{\frac{4}{n-2}}g_0$, then $u$ satisfies
\begin{equation}\label{B2}
\frac{\partial u}{\partial t}=-\frac{n-2}{4}(R_g-\overline{R}_g)u\mbox{ in }M\mbox{ for }t\geq 0.
\end{equation}
Under the Yamabe flow (\ref{B1}), the volume form $dV_g$ and
the scalar curvature $R_g$ of $g$ satisfy (see \cite{Brendle1})
\begin{eqnarray}\label{B3}
&&\frac{\partial}{\partial t}(dV_g)=-\frac{n}{2}(R_g-\overline{R}_g)dV_g,\\
&&\label{B4}
\frac{\partial}{\partial t}R_g=(n-1)\Delta_gR_g-R_g(\overline{R}_g-R_g).
\end{eqnarray}

We have the following:

\begin{lem}\label{BAUX}
Let $g=g(t)$ be the solution of the
the Yamabe flow $(\ref{B1})$
and $\lambda_1(t)$ be the corresponding first nonzero Dirichlet eigenvalue of the Laplacian.
Then for any $t_2\geq t_1$,
there exists a $C^\infty$ function $f$ on $M\times [t_1,t_2]$
satisfying
\begin{equation}\label{Bcondtion}
f=0\mbox{ on }\partial M\hspace{2mm}\mbox{
and }\hspace{2mm}
\int_{M}f^2dV_{g}=1\mbox{ for all }t,
\end{equation}
and
$$\lambda_1(t_2)\geq \lambda_1(t_1)
-\frac{n-2}{2}\int_{t_1}^{t_2}\int_M(R_g-\overline{R}_g)|\nabla_{g} f|^2_{g}dV_{g}dt-2\int_{t_1}^{t_2}\int_{M}\frac{\partial f}{\partial t}\Delta_{g} fdV_{g}dt$$
such that at time $t_2$, $f(t_2)$ is the corresponding eigenfunction of
$\lambda_1(t_2)$.
\end{lem}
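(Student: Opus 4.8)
The plan is to mirror the proof of Proposition \ref{AUX} exactly, since Lemma \ref{BAUX} is its Dirichlet-boundary analogue, and the structure should transfer essentially verbatim. First I would fix the final time $t_2$ and let $f_2 = f(t_2)$ be the first nonzero Dirichlet eigenfunction of $g(t_2)$, so that $f_2 = 0$ on $\partial M$ and $\int_M f_2^2\,dV_{g(t_2)} = 1$. I would then define the test function $h(t) = \left(\frac{u(t_2)}{u(t)}\right)^{\frac{2n}{n-2}} f_2$ using the solution $u(t)$ of (\ref{B2}), and normalize by setting $f(t) = h(t)\big/\left(\int_M h(t)^2\,dV_{g(t)}\right)^{1/2}$. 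The key point to verify is that $f(t)$ satisfies both conditions in (\ref{Bcondtion}): the normalization gives $\int_M f^2\,dV_g = 1$ automatically, while the boundary condition $f = 0$ on $\partial M$ holds for all $t$ because $f_2$ vanishes on $\partial M$ and the prefactor is a positive smooth function, so multiplying by it and normalizing preserves the zero boundary values.

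Next I would set $G(g(t),f(t)) := \int_M |\nabla_{g(t)} f(t)|^2\,dV_{g(t)}$ and differentiate in $t$. Using the conformal relation (\ref{9}), which still holds since the flow preserves the conformal class, I would rewrite $G = \int_M u^2 |\nabla_{g_0} f|^2_{g_0}\,dV_{g_0}$ and differentiate to obtain the evolution formula
\begin{equation*}
\mathcal{G}(g(t),f(t)) = \frac{d}{dt} G
= -\frac{n-2}{2}\int_M (R_g - \overline{R}_g) |\nabla_g f|^2_g\,dV_g
- 2\int_M \frac{\partial f}{\partial t}\Delta_g f\,dV_g,
\end{equation*}
exactly as in (\ref{3.13A}). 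The computation uses (\ref{B2}) in place of (\ref{2}) and integration by parts in the last step. Here I would note that the boundary term arising from integration by parts vanishes: the term $\int_{\partial M} \frac{\partial f}{\partial t}\,\frac{\partial f}{\partial \nu_g}\,dA_g$ is zero because $f(t) = 0$ on $\partial M$ for all $t$, which forces $\frac{\partial f}{\partial t} = 0$ on $\partial M$ as well. This is precisely the place where the Dirichlet condition does real work and the argument differs from the closed case, though the conclusion is the same formula.

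Finally, integrating $\mathcal{G}$ over $[t_1,t_2]$ gives the analogue of (\ref{3.13B}), namely $G(g(t_2),f(t_2)) - G(g(t_1),f(t_1)) = \int_{t_1}^{t_2}\mathcal{G}\,dt$. Since $f(t_2) = f_2$ is the Dirichlet eigenfunction for $\lambda_1(t_2)$, I would compute $G(g(t_2),f(t_2)) = \lambda_1(t_2)$ using the normalization in (\ref{Bcondtion}). For the lower endpoint, $f(t_1)$ is an admissible test function for the Dirichlet eigenvalue problem at time $t_1$ (it lies in the right function space and satisfies the constraints), so the variational characterization of $\lambda_1(t_1)$ gives $G(g(t_1),f(t_1)) \geq \lambda_1(t_1)\int_M f(t_1)^2\,dV_{g(t_1)} = \lambda_1(t_1)$. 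Combining these three facts yields the stated inequality. The main obstacle, such as it is, is simply confirming that the Dirichlet boundary condition propagates cleanly through the construction of $f(t)$ and that it kills the boundary term in the integration by parts; once that is checked, the rest is identical to Proposition \ref{AUX}.
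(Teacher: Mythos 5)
Your proposal is correct and follows the paper's own proof essentially verbatim: the paper likewise takes the Dirichlet eigenfunction at time $t_2$, builds the test function $h(t)=\bigl(u(t_2)/u(t)\bigr)^{\frac{2n}{n-2}}f_2$, normalizes it, and repeats the computation of Proposition \ref{AUX}, noting exactly as you do that the Dirichlet condition $f=0$ on $\partial M$ (hence $\partial f/\partial t=0$ on $\partial M$) kills the boundary term in the integration by parts in (\ref{3.13A}). Your final step, using the variational characterization of the first Dirichlet eigenvalue at $t_1$ (which needs no mean-zero constraint since constants are not admissible), is also the paper's argument.
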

\begin{proof}
As in the proof of Lemma \ref{AUX},
we choose $f_2 = f(t_2)$ to be the eigenfunction for the
first nonzero Dirichlet eigenvalue
$\lambda_1(t_2)$ of $g(t_2)$.
Then $f_2$ satisfies the Dirichlet boundary condition, i.e. $f_2=0$ on $\partial M$.
Thus if we define
$$h(t)=\left(\frac{u(t_2)}{u(t)}\right)^{\frac{2n}{n-2}}f_2$$
where $u(t)$ is the solution of
(\ref{B2}),
then the
normalized function
$$f(t)=\frac{h(t)}{(\int_{M}h(t)^2dV_{g(t)})^{\frac{1}{2}}}$$
satisfies (\ref{Bcondtion}).
Now we can follow the same proof of Lemma \ref{AUX}
to finish the proof, except we have to
use the fact that  $f=0$ on $\partial M$
when we do the integration by parts in the last equality in
(\ref{3.13A}).
This proves the assertion.
\end{proof}

Once Lemma \ref{BAUX} is proved, we can follow the same proof of Proposition \ref{prop2}
to prove the following:

\begin{prop}\label{Bprop2}
The first nonzero Dirichlet eigenvalue $\lambda_1$ of $g$ along the Yamabe flow
$(\ref{B1})$ satisfies
$$\frac{d}{dt}\log\lambda_1\geq-\frac{n-2}{2}\Big(\max_MR_g-\overline{R}_g\Big)
+\frac{n}{2}\Big(\min_MR_g-\overline{R}_g\Big).$$
Here the derivative on the left hand side
is in the
sense of the $\liminf$ of backward difference quotients.
\end{prop}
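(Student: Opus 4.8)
The plan is to imitate verbatim the interior argument from the proof of Proposition \ref{prop2}, with Lemma \ref{BAUX} playing the role that Proposition \ref{AUX} played in the closed case. The crucial observation is that the Dirichlet condition $f=0$ on $\partial M$ has already been used inside Lemma \ref{BAUX} --- it is precisely what kills the boundary terms in the integration by parts of (\ref{3.13A}) --- so that Lemma \ref{BAUX} supplies the same inequality
\[
\lambda_1(t_2)\geq \lambda_1(t_1)-\frac{n-2}{2}\int_{t_1}^{t_2}\int_M(R_g-\overline{R}_g)|\nabla_{g} f|^2_{g}\,dV_{g}\,dt-2\int_{t_1}^{t_2}\int_{M}\frac{\partial f}{\partial t}\Delta_{g} f\,dV_{g}\,dt
\]
with $f(t_2)$ the first Dirichlet eigenfunction for $\lambda_1(t_2)$. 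From here on the estimate is driven entirely by interior integrals, and no further integration by parts across $\partial M$ is needed.

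First I would differentiate the normalization $\int_M f^2\,dV_g=1$ from (\ref{Bcondtion}), using the volume evolution (\ref{B3}), to get the exact analogue of (\ref{7}), namely $\int_M f\,\frac{\partial f}{\partial t}\,dV_g=\frac{n}{4}\int_M f^2(R_g-\overline{R}_g)\,dV_g$; the boundary plays no role since we merely differentiate an integral over $M$. Next, using the pointwise eigenfunction equation $-\Delta_{g(t_2)}f(t_2)=\lambda_1(t_2)f(t_2)$ in $M$ (again with no integration by parts), I would obtain the analogue of (\ref{8}) and combine the two identities, together with the normalization (\ref{Bcondtion}) and $R_g-\overline{R}_g\ge \min_M R_g-\overline{R}_g$, into the lower bound
\[
-\int_M \frac{\partial f(t_2)}{\partial t}\Delta_{g(t_2)}f(t_2)\,dV_{g(t_2)}\geq \frac{n}{4}\lambda_1(t_2)\Big(\min_M R_{g(t_2)}-\overline{R}_{g(t_2)}\Big).
\]
In parallel, estimating $R_g-\overline{R}_g\le \max_M R_g-\overline{R}_g$ on the gradient term and using $\int_M|\nabla_{g(t_2)}f(t_2)|^2_{g(t_2)}\,dV_{g(t_2)}=\lambda_1(t_2)$ produces the analogue of (\ref{11}). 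By continuity of both integrands in $t$ and positivity of $\lambda_1$, these two bounds persist with an arbitrary $\epsilon$-loss for $t$ sufficiently close to $t_2$.

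Then I would substitute the two estimates into the inequality of Lemma \ref{BAUX}, divide by $t_2-t_1$, and let $t_1\to t_2$, bounding $\displaystyle\liminf_{t_1\to t_2}\frac{\lambda_1(t_2)-\lambda_1(t_1)}{t_2-t_1}$ below by $-\frac{n-2}{2}\lambda_1(t_2)\big(\max_M R_g-\overline{R}_g\big)+\frac{n}{2}\lambda_1(t_2)\big(\min_M R_g-\overline{R}_g\big)$, up to $\epsilon$. Passing from $\lambda_1$ to $\log\lambda_1$ through the elementary inequality (\ref{2.16.5}) --- which only uses that $\lambda_1(t)$ is Lipschitz in $t$ --- and letting $\epsilon\to 0$ yields the asserted bound on the backward Dini derivative of $\log\lambda_1$.

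The argument is thus essentially bookkeeping, and the single place requiring genuine justification is the regularity of the Dirichlet eigenvalue along the flow: that $t\mapsto\lambda_1(t)$ is Lipschitz and that the family $f(t)$ manufactured from $f(t_2)$ is smooth enough for the differentiation in (\ref{3.13A}) to be valid. This is the step I expect to be the main obstacle, but it should be milder here than in Proposition \ref{prop2}: because constants do not satisfy the Dirichlet condition, the first Dirichlet eigenvalue is the true bottom of the spectrum and is simple, so standard perturbation theory furnishes the required continuity directly, and the Dini-derivative formulation becomes a matter of convenience rather than necessity.
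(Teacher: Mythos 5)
Your proposal is correct and coincides with the paper's own proof, which consists precisely of the remark that once Lemma \ref{BAUX} is established one repeats the argument of Proposition \ref{prop2} verbatim: differentiate the normalization in (\ref{Bcondtion}) via (\ref{B3}) to get the analogue of (\ref{7}), use the eigenfunction equation for the analogues of (\ref{8})--(\ref{11}), substitute into Lemma \ref{BAUX}, divide by $t_2-t_1$, and pass to $\log\lambda_1$ via (\ref{2.16.5}). Your observations that the Dirichlet condition has already done its work inside Lemma \ref{BAUX} (and in the identity $\int_M|\nabla_{g(t_2)}f(t_2)|^2_{g(t_2)}dV_{g(t_2)}=\lambda_1(t_2)$, where it kills the boundary term) and that simplicity of the first Dirichlet eigenvalue eases the regularity issue are consistent with, indeed slightly more careful than, what the paper records.
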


Similar to the case when the manifold has no boundary,
we can prove the corresponding version of Lemma \ref{BAUX}.
Then we can follow the same proof of Proposition \ref{prop21} to prove the following:
\begin{prop}\label{Bprop21}
The first nonzero Dirichlet eigenvalue $\lambda_1$ of $g$ along the Yamabe flow
$(\ref{1})$ satisfies
$$\frac{d}{dt}\log\lambda_1\leq-\frac{n-2}{2}\Big(\min_MR_g-\overline{R}_g\Big)
+\frac{n}{2}\Big(\max_MR_g-\overline{R}_g\Big).$$
Here the derivative on the left hand side
is in the
sense of the $\limsup$ of forward difference quotients.
\end{prop}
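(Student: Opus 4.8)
The plan is to mirror exactly the two-step structure already used in the closed case, where Proposition \ref{prop21} was deduced from Proposition \ref{AUX1}, now replacing Proposition \ref{AUX1} by its Dirichlet analogue in the same way that Lemma \ref{BAUX} plays the role of Proposition \ref{AUX}.

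First I would establish the boundary version of Proposition \ref{AUX1}: for $t_2 \geq t_1$ there is a smooth $f$ on $M \times [t_1,t_2]$ with $f = 0$ on $\partial M$ and $\int_M f^2 dV_g = 1$, for which
$$\lambda_1(t_2) \leq \lambda_1(t_1) - \frac{n-2}{2}\int_{t_1}^{t_2}\int_M (R_g - \overline{R}_g)|\nabla_g f|^2_g\, dV_g\, dt - 2\int_{t_1}^{t_2}\int_M \frac{\partial f}{\partial t}\Delta_g f\, dV_g\, dt,$$
with $f(t_1)$ the first Dirichlet eigenfunction at time $t_1$. The construction is identical to that in Proposition \ref{AUX1}: take $f_1 = f(t_1)$, set $h(t) = (u(t_1)/u(t))^{2n/(n-2)} f_1$ using the solution $u$ of (\ref{B2}), and normalize. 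Because $f_1 = 0$ on $\partial M$ and $h(t)$ is a pointwise multiple of $f_1$, both $h(t)$ and $f(t)$ vanish on $\partial M$ for every $t$, so the Dirichlet condition in (\ref{Bcondtion}) is preserved, and one then reproduces (\ref{3.13A}) and (\ref{3.13B}).

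The one genuinely new point, and the only place the boundary enters, is the integration by parts in the last equality of (\ref{3.13A}), which converts $2\int_M \langle \nabla_g f, \nabla_g(\partial f/\partial t)\rangle_g\, dV_g$ into $-2\int_M (\partial f/\partial t)\Delta_g f\, dV_g$. This produces a boundary term $2\int_{\partial M} (\partial f/\partial t)(\partial_{\nu_g} f)\, dV$, and this is exactly where the Dirichlet condition is used: since $f \equiv 0$ on $\partial M$ for all $t$, differentiating in $t$ gives $\partial f/\partial t \equiv 0$ on $\partial M$, so the boundary term vanishes and (\ref{3.13A}) holds unchanged. The comparison of $G$ at the two endpoints then goes through as before, using that $f(t_1)$ is the eigenfunction at $t_1$ while $f(t_2)$ is an admissible test function (Dirichlet, unit $L^2$ norm) for the Rayleigh quotient at $t_2$, which yields the stated inequality.

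With this lemma in hand, the proof of Proposition \ref{Bprop21} follows the proof of Proposition \ref{prop21} line for line, since the evolution equations (\ref{B3}) and (\ref{B4}) coincide in form with (\ref{3}) and (\ref{4}): relations (\ref{7}) and (\ref{8}) hold with the Dirichlet eigenfunction (the substitution $-\Delta_g f = \lambda_1 f$ in (\ref{8}) involves no boundary term), giving the analogues of (\ref{2.17}) and (\ref{2.18}); continuity of the relevant integrals in $s$ together with the positivity of $\lambda_1$ produces the one-sided estimates for $t$ near $t_1$; dividing by $t_2 - t_1$ and passing to the $\limsup$ as $t_2 \to t_1$ bounds $\limsup (\lambda_1(t_2)-\lambda_1(t_1))/(t_2-t_1)$, and the logarithmic comparison (\ref{2.22}), valid because $\lambda_1(t)$ is Lipschitz in $t$, converts this into the claimed bound on $\frac{d}{dt}\log \lambda_1$. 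Letting $\epsilon \to 0$ finishes the argument. The main obstacle is entirely the boundary term just discussed; once it is seen to vanish under the Dirichlet condition, no analytic input beyond the closed case is required, the only standing hypothesis being the Lipschitz continuity of the first Dirichlet eigenvalue along the flow.
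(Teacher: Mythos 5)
Your proposal is correct and follows exactly the paper's route: the paper likewise proves Proposition \ref{Bprop21} by first establishing the Dirichlet analogue of Proposition \ref{AUX1} (constructed with the same conformal rescaling of the time-$t_1$ eigenfunction, which automatically preserves the Dirichlet condition) and then repeating the argument of Proposition \ref{prop21} verbatim. You have also correctly isolated the only point where the boundary enters, namely that the boundary term in the integration by parts of (\ref{3.13A}) vanishes because $f\equiv 0$ on $\partial M$ for all $t$ forces $\partial f/\partial t=0$ there — precisely the remark the paper makes in its proof of Lemma \ref{BAUX}.
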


On the other hand,
one can apply the maximum principle to prove
Proposition \ref{prop4}-\ref{prop6} and
Lemma \ref{thm1}-\ref{thm2} for the
Yamabe flow (\ref{B1}) when the manifold has boundary.

Therefore, if $(M,g_0)$ is a compact Riemannian manifold of dimension $n\geq 3$ with smooth boundary such that
$\displaystyle\max_MR_{g_0}< 0$, and  $g_Y$ is the Yamabe metric in the conformal class of $g_0$, i.e. $g_Y$ is the
Riemannian metric conformal to $g_0$ such that its scalar curvature is constant in $M$
and its mean curvature is zero on $\partial M$, then we have the following:

\begin{theorem}\label{mainB}
Suppose $(M,g_0)$ is a compact Riemannian manifold of dimension $n\geq 3$ with smooth boundary $\partial M$
which has negative scalar curvature in $M$ and vanishing mean curvature on $\partial M$,
and $g_Y$ is the Yamabe metric conformal to $g_0$
which has same volume as $g_0$.
Then the first nonzero Dirichlet eigenvalue of $g_0$ and $g_Y$ satisfy
$$
e^{-c}\lambda_1(g_Y)\leq\lambda_1(g_0)\leq
e^{c}\lambda_1(g_Y)
$$
where $c$ is the constant in Theorem \ref{main}.
\end{theorem}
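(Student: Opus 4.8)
The plan is to repeat the proof of Theorem \ref{main} almost verbatim, now working with the first nonzero \emph{Dirichlet} eigenvalue and with the boundary Yamabe flow (\ref{B1}). All the analytic ingredients have already been assembled: the two differential inequalities for $\frac{d}{dt}\log\lambda_1$ are Proposition \ref{Bprop2} and Proposition \ref{Bprop21}, and the boundary versions of the maximum-principle estimates (Proposition \ref{prop4}-\ref{prop6} and Lemma \ref{thm1}-\ref{thm2}) hold as noted above. Thus the only genuinely new inputs concern the long-time behaviour of the flow and the identification of its limit.

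First I would invoke the convergence of the boundary Yamabe flow: by Brendle's work \cite{Brendle1}, the solution $g(t)$ of (\ref{B1}) converges as $t\to\infty$ to a limit metric $g_\infty$ conformal to $g_0$ which has constant negative scalar curvature in $M$ and vanishing mean curvature on $\partial M$. Next, exactly as in the no-boundary case, the volume is preserved along (\ref{B1}): from (\ref{B3}) and the definition of $\overline{R}_g$ one gets $\frac{d}{dt}\int_M dV_g=-\frac{n}{2}\int_M(R_g-\overline{R}_g)\,dV_g=0$, so $\int_M dV_{g_\infty}=\int_M dV_{g_0}=\int_M dV_{g_Y}$. Combining this normalization with the uniqueness of a conformal metric having prescribed constant negative scalar curvature and zero boundary mean curvature (the boundary analogue of the Kazdan--Warner result \cite{Kazdan&Warner} used in the proof of Theorem \ref{main}), I would conclude $g_\infty=g_Y$.

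With $g_\infty=g_Y$ in hand, the remaining estimates transcribe directly. The boundary versions of Lemma \ref{thm1} and Lemma \ref{thm2}, together with the bounds $\min_M R_{g_0}\leq\overline{R}_{g(t)}\leq\max_M R_{g_0}$ (the analogue of (\ref{20d})), yield
\begin{equation*}
-2\left(1-\frac{\min_M R_{g_0}}{\max_M R_{g_0}}\right)\geq\int_0^\infty\left(\max_M R_{g(s)}-\overline{R}_{g(s)}\right)ds
\end{equation*}
and the corresponding estimate for $\min_M R_{g(s)}-\overline{R}_{g(s)}$, just as in (\ref{19}) and (\ref{19a}). Integrating the inequality in Proposition \ref{Bprop2} from $0$ to $\infty$ then produces the upper bound $\lambda_1(g_0)\leq e^{c}\lambda_1(g_Y)$, and integrating Proposition \ref{Bprop21} produces the lower bound $e^{-c}\lambda_1(g_Y)\leq\lambda_1(g_0)$, with $c$ the same constant as in Theorem \ref{main}.

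The hard part will be confirming the two boundary inputs rather than the eigenvalue bookkeeping, which is mechanical. Specifically, one must verify that Brendle's convergence theorem applies under the present hypothesis $\max_M R_{g_0}<0$ and delivers a limit with the stated sign and boundary condition, and that the requisite uniqueness statement holds in the boundary setting so that the volume-normalized limit is forced to be $g_Y$. The one place where the Dirichlet condition must be used explicitly is the integration by parts in the boundary analogue of (\ref{3.13A}), where the vanishing of $f$ on $\partial M$ (guaranteed by (\ref{Bcondtion})) kills the boundary term; this has already been handled in Lemma \ref{BAUX}.
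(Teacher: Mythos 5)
Your proposal is correct and follows essentially the same route as the paper: Brendle's convergence theorem for the boundary Yamabe flow, volume preservation plus a uniqueness theorem to identify $g_\infty=g_Y$, and then integration of Propositions \ref{Bprop2} and \ref{Bprop21} using the boundary analogues of (\ref{19}) and (\ref{19a}). The one input you could not name, the uniqueness of a conformal metric with constant negative scalar curvature and zero boundary mean curvature, is exactly Escobar's result cited in the paper (\cite{Escobar}), which plays the role of the Kazdan--Warner theorem in the boundary setting.
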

\begin{proof}
We only sketch the proof since it is essentially the same as the proof of Theorem \ref{main}.
Brendle has proved in \cite{Brendle1} that $g\to g_\infty$ as $t\to\infty$
under the Yamabe flow (\ref{B1}) such that $g_\infty$ has constant scalar curvature in $M$
and vanishing mean curvature on $\partial M$ (c.f. Theorem 1.1 in \cite{Brendle1}).
As in the proof of Theorem \ref{main}, we can prove that $g_\infty=g_Y$
by using the result of Escobar (see Corollary in \cite{Escobar}), which says that if $g_1$ and $g_2$ are two metrics conformal to $g_0$ such that
$R_{g_1}=R_{g_2}<0$ in $M$ and $H_{g_1}=H_{g_2}=0$ on $\partial M$, then
$g_1=g_2$.
On the other hand, we can follow the same arguments as
in the case without boundary to get (\ref{19}) and (\ref{19a}).
The remaining arguments are the same as the proof of Theorem \ref{main}.
This proves the assertion.
\end{proof}

One can apply Theorem \ref{mainB} to obtain estimate of the  first nonzero Dirichlet eigenvalue.
In \cite{Ling},
Ling proved the following:
Let $(M,g_0)$ be an $n$-dimensional compact Riemannian manifold
with boundary. Suppose that the boundary $\partial M$ has nonnegative mean curvature with respect to the outward normal and that the
Ricci curvature of $M$ has lower bound
$Ric(M)\geq(n-1)\kappa$. Then the first nonzero Dirichlet eigenvalue $\lambda_1$ of the Laplacian of $M$ satisfies
$$\lambda_1\geq\frac{1}{2}(n-1)\kappa+\frac{\pi^2}{d^2},$$
where $d$ is the diameter of the largest interior ball in $M$.

\begin{theorem}
Suppose $M$ is an $n$-dimensional
compact manifold with smooth boundary $\partial M$,
and $g_E$ is an Einstein metric on $M$ with
$Ric_{g_E}=(n-1)\kappa\,g_E$ where $\kappa<0$
and vanishing mean curvature on $\partial M$.
If $g_0$ is a Riemannian metric  conformal to $g_E$
which has negative scalar curvature in $M$, vanishing mean curvature on $\partial M$, and same volume as $g_E$, then
the first nonzero Dirichlet eigenvalue of $(M,g_0)$
satisfies
$$\lambda_1(g_0)\geq \frac{1}{2e^{c}}(n-1)\kappa+\frac{\pi^2}{e^{\frac{nc}{n-1}}d(M,g_0)^2}$$
where $c=2(n-1)\displaystyle\left(\frac{\min_MR_{g_0}}{\max_MR_{g_0}}-1\right)$
and $d(M,g_0)$ is the diameter of the largest interior ball in $M$.
\end{theorem}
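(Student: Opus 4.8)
The plan is to mirror the proof of Theorem~\ref{main1}, replacing the Li--Yau estimate by Ling's estimate and Theorem~\ref{main} by Theorem~\ref{mainB}. First I would record that, under the Yamabe flow (\ref{B1}) with initial metric $g_0$, the flow converges to $g_\infty=g_E$ as $t\to\infty$. Indeed, Brendle's convergence theorem (c.f. Theorem~1.1 in \cite{Brendle1}) gives convergence to a metric $g_\infty$ of constant scalar curvature in $M$ with vanishing mean curvature on $\partial M$; since $g_E$ is Einstein with $R_{g_E}=n(n-1)\kappa<0$ and $H_{g_E}=0$, it is itself such a metric, and Escobar's uniqueness result (Corollary in \cite{Escobar}) together with the volume normalization forces $g_\infty=g_E$, exactly as in the proof of Theorem~\ref{mainB}.

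Next I would establish the interior-ball diameter comparison
\begin{equation*}
e^{-\frac{c}{2(n-1)}}d(M,g_E)\leq d(M,g_0)\leq e^{\frac{c}{2(n-1)}}d(M,g_E).
\end{equation*}
The length-evolution inequalities (\ref{21}) and (\ref{22}) are conformal and purely local, so they remain valid on a manifold with boundary; integrating them from $0$ to $\infty$ along any curve $\gamma$ and invoking (\ref{19}) and (\ref{19a}) yields the two-sided bound $e^{-\frac{c}{2(n-1)}}L_{g_E}(\gamma)\leq L_{g_0}(\gamma)\leq e^{\frac{c}{2(n-1)}}L_{g_E}(\gamma)$, and hence the corresponding bi-Lipschitz comparison of the distance functions $d_{g_0}$ and $d_{g_E}$.

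Then I would feed this into Ling's estimate. Since $\partial M$ has $H_{g_E}=0\geq 0$ and $Ric_{g_E}=(n-1)\kappa\,g_E$, Ling's theorem \cite{Ling} applies and gives $\lambda_1(g_E)\geq \tfrac12(n-1)\kappa+\pi^2/d(M,g_E)^2$. Multiplying by $e^{-c}>0$, using $\lambda_1(g_0)\geq e^{-c}\lambda_1(g_E)$ from Theorem~\ref{mainB} (with $g_Y=g_E$), and substituting $d(M,g_E)^2\leq e^{\frac{c}{n-1}}d(M,g_0)^2$ from the diameter comparison, I obtain
\begin{equation*}
\lambda_1(g_0)\geq \frac{(n-1)\kappa}{2e^{c}}+\frac{\pi^2}{e^{c}e^{\frac{c}{n-1}}d(M,g_0)^2},
\end{equation*}
and the identity $c+\tfrac{c}{n-1}=\tfrac{nc}{n-1}$ delivers the asserted bound.

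The main point requiring care --- and the only genuine departure from Theorem~\ref{main1} --- is that here $d$ denotes the diameter of the \emph{largest interior ball} rather than the global diameter, so the length comparison does not transfer to diameters automatically. I would argue that the bi-Lipschitz comparison of distances still transfers directly: if $B_{g_0}(p,r)\subset\mathrm{int}(M)$ then $B_{g_E}(p,e^{-\frac{c}{2(n-1)}}r)\subset B_{g_0}(p,r)\subset\mathrm{int}(M)$, because $d_{g_0}(p,x)\leq e^{\frac{c}{2(n-1)}}d_{g_E}(p,x)<r$ whenever $d_{g_E}(p,x)<e^{-\frac{c}{2(n-1)}}r$; the symmetric inclusion holds as well, so the largest interior radii, and hence the interior-ball diameters, obey the same two-sided estimate. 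The remaining steps are the routine substitutions indicated above.
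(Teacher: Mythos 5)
Your proposal is correct and takes essentially the same route as the paper's proof: identify $g_\infty=g_E$ via Brendle's convergence and Escobar's uniqueness, transfer the length-evolution estimates (\ref{21})--(\ref{22}) together with (\ref{19}) and (\ref{19a}) to obtain the two-sided comparison (\ref{B12}), and then combine Ling's bound with Theorem \ref{mainB}. Your additional paragraph justifying that the bi-Lipschitz distance comparison passes to the diameter of the largest interior ball fills in, correctly, a detail the paper leaves implicit behind ``as in the proof of Theorem \ref{main1}''.
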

\begin{proof}
As in the proof of Theorem \ref{main1}, we can prove that
\begin{equation}\label{B12}
e^{-\frac{c}{2(n-1)}}d(M,g_E)\leq d(M,g_0)\leq
e^{\frac{c}{2(n-1)}}d(M,g_E),
\end{equation}
where $c=\displaystyle2(n-1)\left(1-\frac{\min_MR_{g_0}}{\max_MR_{g_0}}\right)$, and
$d(M,g_0)$ and $d(M,g_E)$ are respectively the  diameter of the largest interior ball in $M$ with respect to the initial metric $g_0$ and
the Einstein metric $g_\infty$.
On the other hand, by the assumptions and the above result of Ling, we have
$$\lambda_1(g_E)\geq\frac{1}{2}(n-1)\kappa+\frac{\pi^2}{d(M,g_E)^2}.$$
Combining this with (\ref{B12}) and Theorem \ref{mainB}, we obtain
$$e^{c}\lambda_1(g_0)\geq \frac{1}{2}(n-1)\kappa+\frac{\pi^2}{e^{\frac{c}{n-1}}d(M,g_0)^2}.$$
This proves the assertion.
\end{proof}

\section{The unnormalized Yamabe flow on manifolds with boundary}

In this section,
we study the unnormalized Yamabe flow
on a compact Riemannian manifold $(M,g_0)$ with smooth boundary $\partial M$, which is defined as
\begin{equation}\label{MB1}
\frac{\partial}{\partial t}g=-R_gg\mbox{ in }M\mbox{ and }H_g=0 \hbox{ on }\partial M
\mbox{ for }t\geq 0,\hspace{2mm}g|_{t=0}=g_0.
\end{equation}
If we write $g=u^{\frac{4}{n-2}}g_0$, then
\begin{equation}\label{MB2}
\frac{\partial u}{\partial t}=-\frac{n-2}{4}R_gu\mbox{ in }M\mbox{ for }t\geq 0.
\end{equation}
Note that the volume form $dV_g$ and the scalar curvature $R_g$ of $g$ satisfy
\begin{eqnarray}\label{MB3}
&&\frac{\partial}{\partial t}(dV_g)=-\frac{n}{2}R_gdV_g,\\
&&\label{MB4}
\frac{\partial}{\partial t}R_g=(n-1)\Delta_gR_g+R_g^2.
\end{eqnarray}
along the unnormalized Yamabe flow (\ref{MB1}).

Let $\lambda_1$ be the first eigenvalue of $-\Delta_g+aR_g$ with Dirichlet boundary condition, i.e.
\begin{equation}\label{MB5}
-\Delta_g f+aR_gf=\lambda_1 f\mbox{ in }M\hspace{2mm}\mbox{ and }f=0\mbox{ on }\partial M
\end{equation}
for some function $f$.
Again we assume that
there is a family of the first eigenvalue and the corresponding eigenfunction which is $C^1$ in $t$.
By rescaling, we may assume that the eigenfunction $f$ satisfies
\begin{equation}\label{MB6}
\int_Mf^2dV_g=1.
\end{equation}

\begin{prop}\label{MpropB1}
Along the unnormalized Yamabe flow \eqref{MB1}, we have
\begin{equation*}
\begin{split}
\frac{d\lambda_1}{dt}&=\left(2(n-1)a-\frac{n-2}{2}\right)\int_MR_g(|\nabla_{g} f|^2_{g}+aR_gf^2)dV_{g}\\
&\hspace{4mm}
-\left(2(n-1)a-\frac{n}{2}\right)\lambda_1\int_MR_gf^2dV_g.
\end{split}
\end{equation*}
\end{prop}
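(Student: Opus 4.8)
The plan is to mimic exactly the computation in Proposition \ref{propC1}, the only difference being that here the eigenfunction $f$ satisfies the Dirichlet boundary condition $f=0$ on $\partial M$ rather than the orthogonality condition $\int_M f\,dV_g=0$. First I would differentiate the normalization \eqref{MB6} in $t$ and use the volume evolution \eqref{MB3} to obtain
\begin{equation*}
\int_M f\frac{\partial f}{\partial t}\,dV_g=\frac{n}{4}\int_M R_g f^2\,dV_g,
\end{equation*}
the exact analogue of \eqref{C7}. Next, multiplying the eigenvalue equation \eqref{MB5} by $f$ and integrating by parts over $M$, the boundary term vanishes precisely because $f=0$ on $\partial M$, yielding the Rayleigh-quotient identity
\begin{equation*}
\lambda_1=\int_M\big(|\nabla_g f|^2_g+aR_gf^2\big)\,dV_g,
\end{equation*}
the analogue of \eqref{C8}. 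Using $g=u^{4/(n-2)}g_0$ together with the conformal transformation rule \eqref{9} I would rewrite the gradient term as $\int_M u^2|\nabla_{g_0}f|^2_{g_0}\,dV_{g_0}$.

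Having set these up, I would differentiate $\lambda_1$ in $t$ term by term, exactly as in the proof of Proposition \ref{propC1}. The volume-form factor $u^2$ differentiates via \eqref{MB2}, producing the $-\frac{n-2}{2}\int_M R_g|\nabla_g f|^2_g\,dV_g$ term; the scalar-curvature factor in $\int_M R_g f^2\,dV_g$ differentiates via the evolution equation \eqref{MB4}; and the volume form $dV_g$ differentiates via \eqref{MB3}. I would then integrate by parts to convert $2\int_M\langle\nabla_g f,\nabla_g(\partial_t f)\rangle_g\,dV_g$ into $2\int_M \frac{\partial f}{\partial t}(-\Delta_g f)\,dV_g$, and likewise to convert the term $(n-1)a\int_M f^2\Delta_g R_g\,dV_g$ into $(n-1)a\int_M R_g\Delta_g(f^2)\,dV_g$. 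Finally, using the eigenvalue relation \eqref{MB5} to substitute $-\Delta_g f=\lambda_1 f-aR_g f$ and the identity \eqref{MB6}--\eqref{C7} to replace $2\lambda_1\int_M f\,\partial_t f\,dV_g$ by $\frac{n}{2}\lambda_1\int_M R_g f^2\,dV_g$, the terms collect into the stated formula.

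The only genuine subtlety, and hence the step I would watch most carefully, is the integration by parts: every integration by parts must leave no boundary contribution. Since $f=0$ on $\partial M$ by \eqref{MB5}, all boundary terms involving $f$ (and $f^2$, and $\langle\nabla f,\cdot\rangle$ against $f$) vanish. The term $(n-1)a\int_M R_g\,\Delta_g(f^2)\,dV_g$ requires integrating the Laplacian by twice onto $R_g$; one should check that the boundary terms $\int_{\partial M}\big(R_g\,\partial_\nu(f^2)-f^2\,\partial_\nu R_g\big)\,dS_g$ vanish, which they do because $f^2=0$ and $\partial_\nu(f^2)=2f\,\partial_\nu f=0$ on $\partial M$. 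Thus the boundary condition kills exactly the terms that would otherwise obstruct the computation, and the resulting identity is formally identical to that of Proposition \ref{propC1}. With these cancellations verified, the algebra is a routine rearrangement of coefficients precisely as in the boundaryless case.
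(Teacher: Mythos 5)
Your proposal is correct and follows essentially the same route as the paper: reduce everything to the computation of Proposition \ref{propC1} and verify that the Dirichlet condition kills every boundary term produced by integration by parts, including the Green's-identity terms $\int_{\partial M}\left(R_g\frac{\partial}{\partial\nu_g}(f^2)-f^2\frac{\partial R_g}{\partial\nu_g}\right)dV_g$ which you check explicitly. The one point worth stating more precisely, as the paper does, is that the boundary term $\int_{\partial M}\frac{\partial f}{\partial t}\frac{\partial f}{\partial \nu_g}dV_g$ vanishes because $f=0$ on $\partial M$ for \emph{all} $t$ forces $\frac{\partial f}{\partial t}=0$ on $\partial M$ (not because $f$ itself appears in that integrand).
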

\begin{proof}
The proof is almost identical to the proof of Proposition \ref{propC1} except
we have to take care of the boundary term when we integrate by parts. More precisely,
we multiply the first equation in (\ref{MB5}) by $f$ and integrate it over $M$, we obtain
\begin{equation}\label{MB8}
\begin{split}
\lambda_1=\int_M(-f\Delta_g f+aR_gf^2)dV_g&=\int_M(|\nabla_g f|^2_g+aR_gf^2)dV_g-\int_{\partial M}f\frac{\partial f}{\partial\nu_g}dV_g\\
&=\int_M(|\nabla_g f|^2_g+aR_gf^2)dV_g
\end{split}
\end{equation}
by (\ref{MB5}).
On the other hand, we have
\begin{equation}\label{MB9}
\int_M\langle\nabla_{g} f,\nabla_{g}(\frac{\partial f}{\partial t})\rangle_{g}dV_{g}
=-\int_M\frac{\partial f}{\partial t}\Delta_{g} fdV_{g}+\int_{\partial M}\frac{\partial f}{\partial t}\frac{\partial f}{\partial \nu_g}dV_g
=-\int_M\frac{\partial f}{\partial t}\Delta_{g} fdV_{g}
\end{equation}
since $\displaystyle\frac{\partial f}{\partial t}=0$ on $\partial M$ by (\ref{MB5}),  and
\begin{equation}\label{MB10}
\begin{split}
\int_Mf^2\Delta_gR_gdV_{g}-\int_MR_g\Delta_g(f^2)dV_{g}
&=\int_{\partial M}f^2\frac{\partial R_g}{\partial\nu_g}dV_{g}-\int_{\partial M}R_g\frac{\partial}{\partial \nu_g}(f^2)dV_{g}\\
&=\int_{\partial M}f^2\frac{\partial R_g}{\partial\nu_g}dV_{g}-2\int_{\partial M}R_gf\frac{\partial f}{\partial \nu_g}dV_{g}=0
\end{split}
\end{equation}
by (\ref{MB5}). Except these, all the other steps are the same as the proof of Proposition \ref{propC1}.
This proves the assertion.
\end{proof}

Now, by the same proof of Proposition \ref{propC2}-\ref{propC3}, we have the following:

\begin{theorem}\label{Mmain3}
Along the unnormalized Yamabe flow \eqref{MB1},
the first eigenvalue of $-\Delta_g+aR_g$
with Dirichlet boundary condition is nondecreasing\\
(i)
if $0\leq a<\displaystyle\frac{n-2}{4(n-1)}$ and
$\displaystyle\min_MR_g\geq\frac{n-2}{n}\max_MR_g\geq 0$;\\
(ii) if $a\geq\displaystyle\frac{n-2}{4(n-1)}$ and
$\displaystyle\min_MR_g\geq 0$.
\end{theorem}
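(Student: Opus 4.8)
The plan is to reduce Theorem \ref{Mmain3} to the evolution identity already recorded in Proposition \ref{MpropB1} and then to run the sign analysis of Propositions \ref{propC2}, \ref{propC4} and \ref{propC3} without change. The starting point is that the formula
\[
\frac{d\lambda_1}{dt}=\Big(2(n-1)a-\frac{n-2}{2}\Big)\int_MR_g\big(|\nabla_{g}f|^2_g+aR_gf^2\big)dV_g-\Big(2(n-1)a-\frac{n}{2}\Big)\lambda_1\int_MR_gf^2dV_g
\]
is literally the same as in the closed case; the only new feature of the boundary setting is that the Dirichlet condition $f=0$ on $\partial M$ (hence $\partial f/\partial t=0$ on $\partial M$) forces every boundary integral produced by integration by parts to vanish, which is precisely what \eqref{MB8}, \eqref{MB9} and \eqref{MB10} accomplish. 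Thus, once Proposition \ref{MpropB1} is granted, no further boundary analysis is required; it then remains only to check the sign of $\frac{d\lambda_1}{dt}$ in each regime of $a$ and to use the assumed $C^1$-dependence of $\lambda_1(t)$ to pass from $\frac{d\lambda_1}{dt}\ge0$ to monotonicity.

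For case (i), where $0\le a<\frac{n-2}{4(n-1)}$, both $\frac{n-2}{2}-2(n-1)a$ and $\frac{n}{2}-2(n-1)a$ are nonnegative. First I would bound the first integral above by $(\max_MR_g)\lambda_1$ and $\int_MR_gf^2dV_g$ below by $\min_MR_g$, using the normalizations $\int_Mf^2dV_g=1$ from \eqref{MB6} and $\int_M(|\nabla_gf|^2_g+aR_gf^2)dV_g=\lambda_1$ from \eqref{MB8}. Exactly as in Proposition \ref{propC2}, this yields $\frac{d\lambda_1}{dt}\ge\lambda_1\big(\frac{n}{2}\min_MR_g-\frac{n-2}{2}\max_MR_g\big)$, which is nonnegative precisely under the hypothesis $\min_MR_g\ge\frac{n-2}{n}\max_MR_g\ge0$.

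For case (ii), where $a\ge\frac{n-2}{4(n-1)}$, I would split further at $a=\frac{n}{4(n-1)}$. On the range $\frac{n-2}{4(n-1)}\le a\le\frac{n}{4(n-1)}$ the coefficients $2(n-1)a-\frac{n-2}{2}$ and $\frac{n}{2}-2(n-1)a$ are both nonnegative, so bounding each integral below by $\min_MR_g$ times its normalization gives $\frac{d\lambda_1}{dt}\ge\lambda_1\min_MR_g\ge0$, as in Proposition \ref{propC4}. On the range $a\ge\frac{n}{4(n-1)}$ the coefficients $2(n-1)a-\frac{n-2}{2}$ and $2(n-1)a-\frac{n}{2}$ are both nonnegative, and the estimate of Proposition \ref{propC3} gives $\frac{d\lambda_1}{dt}\ge\lambda_1\big(\frac{n}{2}\max_MR_g-\frac{n-2}{2}\min_MR_g\big)\ge0$. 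Since $\min_MR_g\ge0$ throughout case (ii), both subranges are handled, and the two cases together establish the theorem.

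The only genuinely new ingredient compared with the closed-manifold Theorem \ref{Mmain} is the boundary $\partial M$, and the one step where it could interfere is the integration by parts underlying the evolution formula. I expect this to be the sole obstacle, and it is already settled in Proposition \ref{MpropB1}: because $f$ and $\partial f/\partial t$ vanish on $\partial M$, the boundary terms in \eqref{MB9} and \eqref{MB10} drop out and the formula coincides with that of Proposition \ref{propC1}. After that the computation is purely algebraic and identical to the case without boundary, so no additional difficulty appears.
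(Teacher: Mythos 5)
Your proposal is correct and takes essentially the same route as the paper: the paper's own proof of Theorem \ref{Mmain3} consists precisely of invoking Proposition \ref{MpropB1} (where the Dirichlet condition makes the boundary terms in \eqref{MB8}--\eqref{MB10} vanish, so the evolution formula coincides with Proposition \ref{propC1}) and then repeating verbatim the sign analysis of Propositions \ref{propC2}--\ref{propC3}, including the same splitting of case (ii) at $a=\frac{n}{4(n-1)}$. Your argument reproduces exactly these steps, so there is nothing further to add.
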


\subsection{Neumann boundary condition}

Let $\mu_1$ be the first eigenvalue of
$-\Delta_g+aR_g$ with Neumann boundary condition, i.e.
\begin{equation}\label{MB11}
-\Delta_g f+aR_gf=\mu_1 f\mbox{ in }M\hspace{2mm}\mbox{ and }\frac{\partial f}{\partial\nu_g}=0\mbox{ on }\partial M.
\end{equation}
Again we may assume that
there is a family of the first eigenvalue and the corresponding eigenfunction which is $C^1$ in $t$.
By rescaling, we may further assume that the eigenfunction $f$ satisfies
\begin{equation*}
\int_Mf^2dV_g=1.
\end{equation*}
Note that
\eqref{MB8}-\eqref{MB10} are still true thanks to \eqref{MB11}.
Thus, following the proof of Proposition \ref{MpropB1}, we get
\begin{equation*}
\begin{split}
\frac{d\mu_1}{dt}&=\left(2(n-1)a-\frac{n-2}{2}\right)\int_MR_g(|\nabla_{g} f|^2_{g}+aR_gf^2)dV_{g}\\
&\hspace{4mm}
-\left(2(n-1)a-\frac{n}{2}\right)\mu_1\int_MR_gf^2dV_g.
\end{split}
\end{equation*}
along the unnormalized Yamabe flow \eqref{MB1}. As for the Dirichlet boundary condition, we have
the following:

\begin{theorem}\label{Mmain4}
Along the unnormalized Yamabe flow \eqref{MB1},
the first eigenvalue of $-\Delta_g+aR_g$
with Neumann boundary condition is nondecreasing\\
(i)
if $0\leq a<\displaystyle\frac{n-2}{4(n-1)}$ and
$\displaystyle\min_MR_g\geq\frac{n-2}{n}\max_MR_g\geq 0$;\\
(ii) if $a\geq\displaystyle\frac{n-2}{4(n-1)}$ and
$\displaystyle\min_MR_g\geq 0$.
\end{theorem}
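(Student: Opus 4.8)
The plan is to reduce the statement to the purely algebraic sign analysis already performed in Propositions \ref{propC2}, \ref{propC4}, and \ref{propC3}. The point is that the evolution formula for $\mu_1$ displayed immediately above is formally identical to the one in Proposition \ref{propC1} for the closed case, with $\lambda_1$ replaced by $\mu_1$. Consequently, once one checks that the two analytic ingredients feeding those sign computations remain valid under the Neumann boundary condition, their proofs transcribe without change.

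First I would isolate those two ingredients. One is the normalization $\int_M f^2\,dV_g=1$, which we have imposed by rescaling. The other is the Rayleigh-type identity $\mu_1=\int_M\big(|\nabla_g f|^2_g+aR_gf^2\big)\,dV_g$, holding with no boundary contribution: multiplying the first equation of \eqref{MB11} by $f$ and integrating by parts, the boundary term $\int_{\partial M}f\frac{\partial f}{\partial\nu_g}\,dV_g$ drops out because $\frac{\partial f}{\partial\nu_g}=0$ on $\partial M$, which is precisely the content of \eqref{MB8}. These are the only facts about the eigenfunction that enter the estimates of Propositions \ref{propC2}--\ref{propC3}.

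Next I would split into the stated ranges of $a$ and run the corresponding estimates. In case (i), where $0\le a<\frac{n-2}{4(n-1)}$, both $\frac{n-2}{2}-2(n-1)a$ and $\frac{n}{2}-2(n-1)a$ are nonnegative; bounding $\int_M R_g\big(|\nabla_g f|^2_g+aR_gf^2\big)\,dV_g$ above by $\max_M R_g$ times $\mu_1$ and $\int_M R_g f^2\,dV_g$ below by $\min_M R_g$, then inserting the two ingredients, yields
\[
\frac{d\mu_1}{dt}\ge \mu_1\Big(\tfrac{n}{2}\min_M R_g-\tfrac{n-2}{2}\max_M R_g\Big)\ge 0
\]
under the hypothesis $\min_M R_g\ge\frac{n-2}{n}\max_M R_g\ge 0$. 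In case (ii), where $a\ge\frac{n-2}{4(n-1)}$, I would further split at $a=\frac{n}{4(n-1)}$, following Proposition \ref{propC4} on the interval $[\frac{n-2}{4(n-1)},\frac{n}{4(n-1)}]$ and Proposition \ref{propC3} on $[\frac{n}{4(n-1)},\infty)$; in each subrange the coefficient $2(n-1)a-\frac{n-2}{2}$ is nonnegative, so estimating with $\min_M R_g\ge 0$ again gives $\frac{d\mu_1}{dt}\ge 0$.

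I do not expect a genuine obstacle here, since the whole argument of Theorem \ref{Mmain3} for the Dirichlet case already goes through once the boundary terms are controlled. The one place demanding attention is the observation isolated in the second paragraph: that integration by parts against a Neumann eigenfunction produces no boundary contribution in the Rayleigh identity \eqref{MB8}, nor in the companion identities \eqref{MB9}--\eqref{MB10}. This is exactly what replaces the use of $f=0$ on $\partial M$ in the Dirichlet setting and allows the entire computation to be carried over verbatim.
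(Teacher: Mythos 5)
Your proposal follows essentially the paper's own route: the paper likewise observes that the identities \eqref{MB8}--\eqref{MB10} persist under the Neumann condition \eqref{MB11}, concludes that $\mu_1$ satisfies the same evolution formula as in Proposition \ref{MpropB1}, and then reuses the sign analysis of Propositions \ref{propC2}--\ref{propC3} verbatim. However, one point deserves more care than either you or the paper give it: the Neumann condition on $f$ does \emph{not} by itself kill all boundary terms in \eqref{MB10}. Green's identity there produces $\int_{\partial M}f^2\frac{\partial R_g}{\partial\nu_g}\,dV_g-2\int_{\partial M}R_gf\frac{\partial f}{\partial\nu_g}\,dV_g$; the second integral vanishes because $\frac{\partial f}{\partial\nu_g}=0$, but the first survives unless $\frac{\partial R_g}{\partial\nu_g}=0$ on $\partial M$. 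That fact is true here, but it is a property of the flow rather than of the eigenfunction: the boundary condition $H_g=0$ in \eqref{MB1} forces the scalar curvature to satisfy the Neumann condition $\frac{\partial R_g}{\partial\nu_g}=0$ on $\partial M$ (this is established by Brendle \cite{Brendle1}, and it is also what underlies the maximum-principle arguments for $R_g$ in the boundary setting). For $a>0$, without this ingredient the evolution formula for $\mu_1$ would acquire the uncontrolled term $(n-1)a\int_{\partial M}f^2\frac{\partial R_g}{\partial\nu_g}\,dV_g$, whose sign is indeterminate, and the monotonicity argument would not close. So your closing sentence, that the vanishing of boundary contributions ``is exactly what replaces the use of $f=0$'' and comes from the Neumann condition on $f$, is inaccurate for this one term; you should invoke the flow's boundary condition on $R_g$ explicitly at that point.
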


\section{The CR Yamabe flow}

Throughout this section, we suppose that  $(M,\theta_0)$ is a compact strictly pseudoconvex CR manifold of real dimension $2n+1$.
We consider the CR Yamabe flow, which is defined as
\begin{equation}\label{A1}
\frac{\partial}{\partial t}\theta=-(R_\theta-\overline{R}_\theta)\theta\mbox{ for }t\geq 0,\hspace{2mm}\theta|_{t=0}=\theta_0.
\end{equation}
Here $R_\theta$ is the Webster scalar curvature of the contact form $\theta$, and $\overline{R}_\theta$ is the average of the Webster scalar curvature
given by
\begin{equation}\label{A0}
\overline{R}_\theta=\frac{\int_MR_\theta dV_\theta}{\int_MdV_\theta},
\end{equation}
where $dV_\theta=\theta\wedge (d\theta)^n$ is the volume form of $\theta$. Since the CR Yamabe flow preserves the conformal structure, we can write $\theta=u^{\frac{2}{n}}\theta_0$
for some positive function $u$, and $u$ satisfies the following evolution equation:
\begin{equation}\label{A2}
\frac{\partial u}{\partial t}=-\frac{n}{2}(R_\theta-\overline{R}_\theta)u\mbox{ for }t\geq 0,\hspace{2mm}u|_{t=0}=1.
\end{equation}
Hence, the volume form $dV_\theta$ of $\theta$ satisfies
\begin{equation}\label{A3}
\frac{\partial}{\partial t}(dV_\theta)=\frac{\partial}{\partial t}(u^{\frac{2n+2}{n}}dV_{\theta_0})=
\frac{2n+2}{n}u^{\frac{2n+2}{n}-1}\frac{\partial u}{\partial t}dV_{\theta_0}=-(n+1)(R_\theta-\overline{R}_\theta)dV_\theta.
\end{equation}
On the other hand, the Webster scalar curvature $R_\theta$ of $\theta$ satisfies the following evolution equation:
(see Proposition 3.2 in \cite{Ho2} or Lemma 2.4 in \cite{Ho1})
\begin{equation}\label{A4}
\frac{\partial}{\partial t}R_\theta=(n+1)\Delta_\theta R_\theta-R_\theta(\overline{R}_\theta-R_\theta).
\end{equation}
Here $\Delta_\theta$ is the sub-Laplacian of the contact form $\theta$.

We have the following lemma, which is again inspired by the Proposition 3.1 in \cite{Wu&Wang&Zheng}.

\begin{lem}\label{AAUX}
Let $\theta=\theta(t)$ be the solution of the
the CR Yamabe flow $(\ref{A1})$
and $\lambda_1(t)$ be the corresponding first nonzero eigenvalue of the sub-Laplacian.
Then for any $t_2\geq t_1$,
there exists a $C^\infty$ function $f$ on $M\times [t_1,t_2]$
satisfying
\begin{equation}\label{Acondition}
\int_{M}f^2dV_{\theta}=1\hspace{2mm}\mbox{
and }\hspace{2mm}\displaystyle\int_{M}fdV_{\theta}=0\mbox{ for all }t,
\end{equation}
and
$$\lambda_1(t_2)\geq \lambda_1(t_1)
-n\int_{t_1}^{t_2}\int_M(R_\theta-\overline{R}_\theta)|\nabla_{\theta} f|^2_{\theta}dV_{\theta}dt-2\int_{t_1}^{t_2}\int_{M}\frac{\partial f}{\partial t}\Delta_{\theta} fdV_{\theta}dt$$
such that at time $t_2$, $f(t_2)$ is the corresponding eigenfunction of
$\lambda_1(t_2)$.
\end{lem}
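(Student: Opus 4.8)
The plan is to follow the proof of Proposition \ref{AUX} line by line, replacing the Riemannian conformal weight $\frac{4}{n-2}$ by the CR weight $\frac{2}{n}$ and keeping careful track of the resulting change of constants. First I would set $f_2=f(t_2)$ to be the eigenfunction of the sub-Laplacian associated with $\lambda_1(t_2)$, normalized so that $\int_M f_2^2\,dV_{\theta(t_2)}=1$ and $\int_M f_2\,dV_{\theta(t_2)}=0$. The crucial choice is the comparison function
\[
h(t)=\left(\frac{u(t_2)}{u(t)}\right)^{\frac{2n+2}{n}}f_2,
\]
where the exponent $\frac{2n+2}{n}$ is exactly the one appearing in $dV_\theta=u^{\frac{2n+2}{n}}dV_{\theta_0}$ from (\ref{A3}). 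After normalizing $f(t)=h(t)\big/\big(\int_M h(t)^2\,dV_{\theta(t)}\big)^{1/2}$, this choice forces $\int_M f(t)\,dV_{\theta(t)}=\int_M f_2\,dV_{\theta(t_2)}=0$ for every $t$, since the conformal factors in $h$ and in $dV_\theta$ cancel; together with the normalization this verifies (\ref{Acondition}).

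Next I would record the CR conformal covariance of the horizontal gradient: under $\theta=u^{\frac{2}{n}}\theta_0$ the Levi form rescales by $u^{\frac{2}{n}}$, so that
\[
\langle\nabla_\theta f_1,\nabla_\theta f_2\rangle_\theta=u^{-\frac{2}{n}}\langle\nabla_{\theta_0} f_1,\nabla_{\theta_0} f_2\rangle_{\theta_0},
\]
the exact analogue of (\ref{9}). Combining this with $dV_\theta=u^{\frac{2n+2}{n}}dV_{\theta_0}$ gives
\[
G(\theta(t),f(t)):=\int_M|\nabla_\theta f|^2_\theta\,dV_\theta=\int_M u^2|\nabla_{\theta_0} f|^2_{\theta_0}\,dV_{\theta_0},
\]
which has precisely the same form as in the Riemannian case, so the conformal weight has been absorbed cleanly. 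Differentiating $G$ in $t$, substituting the evolution equation (\ref{A2}) for $\partial u/\partial t$ into the term $2u\,(\partial u/\partial t)$, converting back to $\theta$-quantities, and integrating by parts for the sub-Laplacian on the closed manifold $M$ should yield
\[
\frac{d}{dt}G(\theta(t),f(t))=-n\int_M(R_\theta-\overline{R}_\theta)|\nabla_\theta f|^2_\theta\,dV_\theta-2\int_M\frac{\partial f}{\partial t}\Delta_\theta f\,dV_\theta,
\]
in which the factor $n$ (rather than $\tfrac{n-2}{2}$) arises solely from the coefficient $\tfrac{n}{2}$ in (\ref{A2}).

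Finally I would integrate this identity from $t_1$ to $t_2$ and compare the endpoints. Since $f(t_2)=f_2$ is the eigenfunction for $\lambda_1(t_2)$, we have $G(\theta(t_2),f(t_2))=\lambda_1(t_2)$; since $f(t_1)$ satisfies the two constraints in (\ref{Acondition}), the variational characterization of the first nonzero eigenvalue of the sub-Laplacian gives $G(\theta(t_1),f(t_1))\geq\lambda_1(t_1)$. Subtracting yields exactly the claimed inequality. I expect the only genuine point requiring care to be the conformal transformation law for $|\nabla_\theta f|^2_\theta$ and the verification that the exponents conspire so that $G$ reduces to $\int_M u^2|\nabla_{\theta_0} f|^2_{\theta_0}\,dV_{\theta_0}$; once that bookkeeping is confirmed, the integration by parts is standard on a boundaryless CR manifold and the remaining steps are identical to those of Proposition \ref{AUX}.
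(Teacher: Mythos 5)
Your proposal is correct and follows essentially the same route as the paper's proof: the same comparison function $h(t)=\bigl(u(t_2)/u(t)\bigr)^{\frac{2n+2}{n}}f_2$ (the paper writes the exponent as $2+\tfrac{2}{n}$), the same normalization, the same functional $G(\theta(t),f(t))=\int_M u^2|\nabla_{\theta_0}f|^2_{\theta_0}\,dV_{\theta_0}$, the same differentiation using \eqref{A2} and \eqref{A9} producing the coefficient $n$, and the same endpoint comparison via $G(\theta(t_2),f(t_2))=\lambda_1(t_2)$ and the variational characterization $G(\theta(t_1),f(t_1))\geq\lambda_1(t_1)$. The bookkeeping you flag as the only delicate point (the exponents conspiring so that $u^{-\frac{2}{n}}\cdot u^{\frac{2n+2}{n}}=u^2$) is exactly what the paper verifies, so nothing is missing.
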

\begin{proof}
At time $t_2$, we  let $f_2 = f(t_2)$ be the eigenfunction for the
first nonzero eigenvalue
$\lambda_1(t_2)$ of $\theta(t_2)$.
We define the following smooth function on $M$:
$$h(t)=\left(\frac{u(t_2)}{u(t)}\right)^{2+\frac{2}{n}}f_2$$
where $u(t)$ is the solution of
(\ref{A2}).
We normalize this smooth function on $M$ by
$$f(t)=\frac{h(t)}{(\int_{M}h(t)^2dV_{\theta(t)})^{\frac{1}{2}}}.$$
Then we can easily check that $f(t)$ satisfies (\ref{Acondition}).

Set
$$
G(\theta(t),f(t)):=\int_M|\nabla_{\theta(t)}f(t)|^2_{\theta(t)}dV_{\theta(t)}.
$$
Note that $G(\theta(t),f(t))$ is a smooth function in $t$.
Since $\theta=u^{\frac{2}{n}}\theta_0$, we have
\begin{equation}\label{A9}
\langle\nabla_\theta f_1,\nabla_\theta f_2\rangle_\theta=u^{-\frac{2}{n}}\langle\nabla_{\theta_0} f_1,\nabla_{\theta_0} f_2\rangle_{\theta_0}
\end{equation}
for any functions $f_1, f_2$ in $M$, which implies that
\begin{equation*}
G(g(t),f(t))=\int_Mu^2|\nabla_{\theta_0} f|^2_{\theta_0}dV_{\theta_0}.
\end{equation*}
Differentiating it with respect to $t$, we get
\begin{equation}\label{6.8}
\begin{split}
\mathcal{G}(\theta(t),f(t))&:=\frac{d}{dt}G(\theta(t),f(t))\\
&=\int_M2u\frac{\partial u}{\partial t}|\nabla_{\theta_0} f|^2_{\theta_0}dV_{\theta_0}
+2\int_Mu^2\langle\nabla_{\theta_0} f,\nabla_{\theta_0}(\frac{\partial f}{\partial t})\rangle_{\theta_0}dV_{\theta_0}\\
&=-n\int_M(R_\theta-\overline{R}_\theta)|\nabla_{\theta} f|^2_{\theta}dV_{\theta}
-2\int_M\frac{\partial f}{\partial t}\Delta_{\theta} fdV_{\theta}\\
\end{split}
\end{equation}
where we have used integration by parts, (\ref{A2}) and (\ref{A9}).
It follows from the definition of $\mathcal{G}(\theta(t),f(t))$
in (\ref{6.8}) that
\begin{equation}
G(\theta(t_2),f(t_2))-G(\theta(t_1),f(t_1))=\int_{t_1}^{t_2}\mathcal{G}(\theta(t),f(t))dt.
\end{equation}
Since $f(t_2)$ is the corresponding eigenfunction of
$\lambda_1(t_2)$, we have
\begin{equation}
G(\theta(t_2),f(t_2))=\lambda_1(t_2)\int_{M}f(t_2)^2dV_{\theta(t_2)}=\lambda_1(t_2)
\end{equation}
by (\ref{Acondition}). On the other hand, it follows from (\ref{Acondition}) and the definition
of $G(\theta(t),f(t))$ that
\begin{equation}\label{6.10}
G(\theta(t_1),f(t_1))\geq \lambda_1(t_1)\int_{\partial M}f(t_1)^2dV_{\theta(t_1)}=\lambda_1(t_1).
\end{equation}
Now the result follows from (\ref{6.8})-(\ref{6.10}).
\end{proof}

Hence, we have the following:

\begin{prop}\label{prop6.2}
The first nonzero eigenvalue $\lambda_1$ of the sub-Laplacian along the CR Yamabe flow
$(\ref{A1})$ satisfies
$$\frac{d}{dt}\log\lambda_1\geq-n\Big(\max_MR_\theta-\overline{R}_\theta\Big)
+(n+1)\Big(\min_MR_\theta-\overline{R}_\theta\Big).$$
Here the derivative on the left hand side
is in the
sense of the $\liminf$ of backward difference quotients.
\end{prop}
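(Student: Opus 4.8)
The plan is to transcribe the proof of Proposition \ref{prop2} line by line, replacing the Riemannian evolution equations by their CR counterparts (\ref{A2})--(\ref{A4}) and adjusting the numerical coefficients accordingly. Fix $t_2$ and let $f$ be the function supplied by Lemma \ref{AAUX}, so that the normalizations (\ref{Acondition}) hold for all $t\in[t_1,t_2]$ and $f(t_2)$ is the eigenfunction of $\lambda_1(t_2)$.

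First I would differentiate the constraint $\int_M f^2\,dV_\theta=1$ from (\ref{Acondition}) in $t$. Using the volume evolution (\ref{A3}) this yields the CR analogue of (\ref{7}),
$$\int_M f\,\frac{\partial f}{\partial t}\,dV_\theta=\frac{n+1}{2}\int_M f^2(R_\theta-\overline{R}_\theta)\,dV_\theta.$$
Since $f(t_2)$ is the eigenfunction for $\lambda_1(t_2)$, multiplying the eigenvalue equation by $\partial f/\partial t$ and integrating by parts gives, at $t=t_2$, the identity $-\int_M \frac{\partial f}{\partial t}\Delta_\theta f\,dV_\theta=\lambda_1(t_2)\int_M f\,\frac{\partial f}{\partial t}\,dV_\theta$. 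Combining the two and bounding $f^2(R_\theta-\overline{R}_\theta)$ below by $f^2(\min_M R_\theta-\overline{R}_\theta)$ produces
$$-\int_M \frac{\partial f}{\partial t}\Delta_\theta f\,dV_\theta\ge \frac{n+1}{2}\lambda_1(t_2)\bigl(\min_M R_\theta-\overline{R}_\theta\bigr)$$
at $t_2$, while the Rayleigh identity $\int_M|\nabla_\theta f|^2_\theta\,dV_\theta=\lambda_1(t_2)$ gives
$$-\int_M(R_\theta-\overline{R}_\theta)|\nabla_\theta f|^2_\theta\,dV_\theta\ge -\lambda_1(t_2)\bigl(\max_M R_\theta-\overline{R}_\theta\bigr)$$
at $t_2$.

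Next, exactly as in the passage (\ref{further})--(\ref{11}), I would upgrade these two estimates, which a priori hold only at the single instant $t_2$, to estimates valid (up to an arbitrary $\epsilon>0$) for all $t$ sufficiently close to $t_2$. This uses the continuity in $s$ of the two integrals evaluated on the fixed test function $f$, the continuity of $s\mapsto\max_M R_\theta-\overline{R}_\theta$ and $s\mapsto\min_M R_\theta-\overline{R}_\theta$, and the positivity of $\lambda_1(t_2)$. Substituting the resulting bounds into the inequality of Lemma \ref{AAUX}, dividing by $t_2-t_1$ and letting $t_1\to t_2$, I obtain the CR analogue of (\ref{2.16}),
$$\liminf_{t_1\to t_2}\frac{\lambda_1(t_2)-\lambda_1(t_1)}{t_2-t_1}\ge -n\,\lambda_1(t_2)\bigl(\max_M R_\theta-\overline{R}_\theta\bigr)+(n+1)\lambda_1(t_2)\bigl(\min_M R_\theta-\overline{R}_\theta\bigr),$$
where the coefficient $n$ comes from the gradient term in Lemma \ref{AAUX} and $n+1$ from doubling $\tfrac{n+1}{2}$.

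Finally, to convert the statement about $\lambda_1$ into one about $\log\lambda_1$ I would invoke the elementary inequality (\ref{2.16.5}) for the backward $\liminf$, whose proof only uses that $\lambda_1(t)$ is positive and Lipschitz continuous in $t$; dividing the last display by $\lambda_1(t_2)$ then yields the proposition. The one genuinely delicate point, inherited verbatim from Proposition \ref{prop2}, is the low regularity of $\lambda_1$: because $\lambda_1$ is only Lipschitz and not $C^1$, it cannot be differentiated directly, and one must work throughout with one-sided difference quotients and lean on the continuity in $s$ of the \emph{fixed} comparison function $f$ built in Lemma \ref{AAUX}. The remaining bookkeeping is a routine substitution of coefficients $\tfrac{n-2}{2}\rightsquigarrow n$ and $\tfrac{n}{4}\rightsquigarrow\tfrac{n+1}{2}$ dictated by (\ref{A2})--(\ref{A4}).
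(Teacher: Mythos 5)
Your proposal is correct and follows essentially the same route as the paper: the paper's proof of this proposition is precisely the line-by-line CR transcription of Proposition \ref{prop2}, using (\ref{A3}) to differentiate the normalization, the eigenfunction identity at $t_2$, the continuity-plus-$\epsilon$ upgrade to a neighborhood of $t_2$, substitution into Lemma \ref{AAUX}, and the backward-$\liminf$ logarithm inequality. Your coefficient bookkeeping ($n$ from the gradient term, $n+1$ from doubling $\tfrac{n+1}{2}$) matches the paper exactly, and your version of the gradient estimate even fixes a sign typo present in the paper's display (\ref{A11}), where the leading minus sign on the integral was dropped.
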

\begin{proof}
Differentiate the first equation in (\ref{Acondition}) with respect to $t$, we have
\begin{equation}\label{A7}
\int_Mf\frac{\partial f}{\partial t}dV_\theta=\frac{n+1}{2}\int_Mf^2(R_\theta-\overline{R}_\theta)dV_\theta
\end{equation}
by (\ref{A3}). On the other hand,
since $f(t_2)$ is the corresponding eigenfunction of
$\lambda_1(t_2)$, we have
\begin{equation}\label{A8}
-\int_{M}\frac{\partial f(t_2)}{\partial t}\Delta_{\theta(t_2)} f(t_2)dV_{\theta(t_2)}
=\lambda_1(t_2)\int_Mf(t_2)\frac{\partial f(t_2)}{\partial t}dV_{\theta(t_2)}.
\end{equation}
Combining (\ref{A7}) and (\ref{A8}), we get
\begin{equation*}
\begin{split}
-\int_{M}\frac{\partial f(t_2)}{\partial t}\Delta_{\theta(t_2)} f(t_2)dV_{\theta(t_2)}
&=\frac{n+1}{2}\lambda_1(t_2)\int_Mf(t_2)^2(R_{\theta(t_2)}-\overline{R}_{\theta(t_2)})dV_{\theta(t_2)}\\
&\geq\frac{n+1}{2}\lambda_1(t_2)\left(\min_MR_{\theta(t_2)}-\overline{R}_{\theta(t_2)}\right),
\end{split}
\end{equation*}
where we have used (\ref{Acondition}) in the last inequality.
Since $\lambda_1(t_2)>0$, we have  for any $\epsilon>0$
\begin{equation*}
-\int_{M}\frac{\partial f(t_2)}{\partial t}\Delta_{\theta(t_2)} f(t_2)dV_{\theta(t_2)}>\frac{n+1}{2}\lambda_1(t_2)\left(\min_MR_{\theta(t_2)}-\overline{R}_{\theta(t_2)}-\epsilon\right).
\end{equation*}
Hence, by continuity, we can conclude that for any $\epsilon>0$
\begin{equation}\label{A10}
-\int_{M}\frac{\partial f}{\partial t}\Delta_{\theta} f(t_2)dV_{\theta}
\geq\frac{n+1}{2}\lambda_1(t_2)\left(\min_MR_{\theta}-\overline{R}_{\theta}-\epsilon\right)
\end{equation}
when  $t$ is sufficiently closed to $t_2$.
On the other hand,
\begin{equation*}
\begin{split}
&\int_M(R_{\theta(t_2)}-\overline{R}_{\theta(t_2)})|\nabla_{\theta(t_2)} f|^2_{\theta(t_2)}dV_{\theta(t_2)}\\
&\geq-\Big(\max_M R_{\theta(t_2)}-\overline{R}_{\theta(t_2)}\Big)\int_M|\nabla_{\theta(t_2)} f|^2_{\theta(t_2)}dV_{\theta(t_2)}=-\lambda_1(t_2)\Big(\max_MR_{\theta(t_2)}-\overline{R}_{\theta(t_2)}\Big)
\end{split}
\end{equation*}
by (\ref{Acondition}) and the fact that $f(t_2)$ is the corresponding eigenfunction of
$\lambda_1(t_2)$.
Since $\lambda_1(t_2)>0$, we have  for any $\epsilon>0$
\begin{equation*}
\int_M(R_{\theta(t_2)}-\overline{R}_{\theta(t_2)})|\nabla_{\theta(t_2)} f|^2_{\theta(t_2)}dV_{\theta(t_2)}
>-\lambda_1(t_2)\Big(\max_MR_{\theta(t_2)}-\overline{R}_{\theta(t_2)}+\epsilon\Big).
\end{equation*}
Hence, by continuity,
we can conclude that for any $\epsilon>0$
\begin{equation}\label{A11}
\begin{split}
\int_M(R_\theta-\overline{R}_\theta)|\nabla_{\theta} f|^2_{\theta}dV_{\theta}
&\geq-\lambda_1(t_2)\Big(\max_MR_\theta-\overline{R}_\theta+\epsilon\Big)
\end{split}
\end{equation}
when  $t$ is sufficiently closed to $t_2$.
Substituting (\ref{A10}) and (\ref{A11}) into
the inequality in Lemma \ref{AAUX}, we obtain
\begin{equation*}
\begin{split}
&\lambda_1(t_2)-\lambda_1(t_1)\\
&\geq
-n\lambda_1(t_2)\int_{t_1}^{t_2}\Big(\max_MR_\theta-\overline{R}_\theta+\epsilon\Big)dt+
(n+1)\lambda_1(t_2)
\int_{t_1}^{t_2}\Big(\min_MR_\theta-\overline{R}_\theta-\epsilon\Big)dt
\end{split}
\end{equation*}
for $t_1<t_2$ and $t_1$ sufficiently closed to $t_2$.
Now the assertion follows from dividing the last inequality by $t_2-t_1$,
letting $t_1$ go to $t_2$, and then letting $\epsilon\to 0$, as we have done in
the proof of Proposition \ref{prop2}.
\end{proof}

Similar to Proposition \ref{prop6.2},
we can  prove the following:
\begin{prop}\label{prop6.3}
The first nonzero eigenvalue $\lambda_1$ of the sub-Laplacian along the CR Yamabe flow
$(\ref{A1})$ satisfies
$$\frac{d}{dt}\log\lambda_1\leq-n\Big(\min_MR_\theta-\overline{R}_\theta\Big)
+(n+1)\Big(\max_MR_\theta-\overline{R}_\theta\Big).$$
Here the derivative on the left hand side
is in the
sense of the $\limsup$ of forward difference quotients.
\end{prop}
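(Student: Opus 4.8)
The plan is to establish Proposition \ref{prop6.3} by running the argument of Proposition \ref{prop6.2} ``in reverse'', anchoring the test function at the left endpoint $t_1$ rather than the right endpoint $t_2$. First I would prove the dual of Lemma \ref{AAUX}: for any $t_2\geq t_1$ there is a $C^\infty$ function $f$ on $M\times[t_1,t_2]$ satisfying (\ref{Acondition}) with $f(t_1)$ equal to the eigenfunction of $\lambda_1(t_1)$, and such that
$$\lambda_1(t_2)\leq \lambda_1(t_1)-n\int_{t_1}^{t_2}\int_M(R_\theta-\overline{R}_\theta)|\nabla_\theta f|^2_\theta\,dV_\theta\,dt-2\int_{t_1}^{t_2}\int_M\frac{\partial f}{\partial t}\Delta_\theta f\,dV_\theta\,dt.$$
The construction is identical to that of Lemma \ref{AAUX} except that I would set $h(t)=(u(t_1)/u(t))^{2+2/n}f_1$ with $f_1=f(t_1)$; the normalization then yields (\ref{Acondition}), the identity (\ref{6.8}) and its integrated form remain valid, and now $G(\theta(t_1),f(t_1))=\lambda_1(t_1)$ holds with equality (since $f_1$ is the eigenfunction) while $G(\theta(t_2),f(t_2))\geq\lambda_1(t_2)$ by the variational characterization, which is exactly what reverses the inequality.

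Next I would estimate the two integrands at $t_1$. Differentiating the normalization and using (\ref{A3}) as in (\ref{A7}), together with the fact that $f(t_1)$ is the eigenfunction of $\lambda_1(t_1)$, gives
$$-\int_M\frac{\partial f(t_1)}{\partial t}\Delta_{\theta(t_1)}f(t_1)\,dV_{\theta(t_1)}=\frac{n+1}{2}\lambda_1(t_1)\int_M f(t_1)^2(R_\theta-\overline{R}_\theta)\,dV_\theta\leq\frac{n+1}{2}\lambda_1(t_1)\Big(\max_M R_\theta-\overline{R}_\theta\Big),$$
and, bounding $R_\theta-\overline{R}_\theta$ from below by $\min_M R_\theta-\overline{R}_\theta$,
$$\int_M(R_\theta-\overline{R}_\theta)|\nabla_{\theta(t_1)}f(t_1)|^2_{\theta(t_1)}\,dV_{\theta(t_1)}\geq\lambda_1(t_1)\Big(\min_M R_\theta-\overline{R}_\theta\Big).$$
Since $\lambda_1(t_1)>0$ and both integrals depend continuously on $t$, for any $\epsilon>0$ these bounds persist (with an $\epsilon$ slack) for $t$ sufficiently close to $t_1$. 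Substituting into the dual auxiliary inequality, dividing by $t_2-t_1$, and letting $t_2\to t_1$ yields
$$\limsup_{t_2\to t_1}\frac{\lambda_1(t_2)-\lambda_1(t_1)}{t_2-t_1}\leq\lambda_1(t_1)\Big[-n\big(\min_M R_\theta-\overline{R}_\theta\big)+(n+1)\big(\max_M R_\theta-\overline{R}_\theta\big)\Big]$$
after letting $\epsilon\to0$.

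Finally I would convert this to a bound on the logarithmic derivative exactly as in (\ref{2.22}): using the Lipschitz continuity of $\lambda_1$ and the expansion $\log\lambda_1(t_2)-\log\lambda_1(t_1)=(\lambda_1(t_2)/\lambda_1(t_1)-1)+O((\lambda_1(t_2)/\lambda_1(t_1)-1)^2)$, the quadratic term contributes nothing in the limit, and one obtains $\limsup(\log\lambda_1(t_2)-\log\lambda_1(t_1))/(t_2-t_1)\leq \lambda_1(t_1)^{-1}\limsup(\lambda_1(t_2)-\lambda_1(t_1))/(t_2-t_1)$, which gives the claimed inequality. The main subtlety, as flagged in the introduction, is that $\lambda_1$ is only known to be Lipschitz (not $C^1$) in $t$, so every derivative must be read as a one-sided Dini derivative; this is precisely why the test function must be anchored at $t_1$ and the continuity estimates performed at the fixed time $t_1$ \emph{before} passing to the limit, so that the $\limsup$ of \emph{forward} difference quotients is controlled.
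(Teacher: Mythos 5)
Your proposal is correct and follows exactly the route the paper prescribes (and leaves to the reader): prove the CR analogue of Proposition \ref{AUX1} by anchoring the test function at $t_1$ via $h(t)=(u(t_1)/u(t))^{2+2/n}f_1$, then repeat the argument of Proposition \ref{prop21}, with the CR constants $-n$ and $(n+1)$ replacing $-\tfrac{n-2}{2}$ and $\tfrac{n}{2}$. All the key steps — the reversed Rayleigh-quotient inequality, the $\epsilon$-slack continuity argument at the fixed time $t_1$, and the Lipschitz-based conversion to the logarithmic forward Dini derivative as in (\ref{2.22}) — match the paper's intended proof.
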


To prove Proposition \ref{prop6.3}, we first prove the corresponding version
of Lemma \ref{AUX1} for the CR Yamabe flow. Then we can prove Proposition \ref{prop6.3}
as we have done in the proof of Proposition \ref{prop21}.  We omit the proof and leave it to the readers.

We will omit the proof of the following proposition, since the idea is the same as
the proof of Proposition \ref{prop4}-\ref{prop6}. More precisely,
one can use the fact that
$t\mapsto \overline{R}_{\theta}$ is nonincreasing along the CR Yamebe flow
(c.f. see Proposition 3.3 in \cite{Ho2}) and apply the maximum principle to \eqref{A4}
to prove the following:

\begin{prop}\label{propA4}
If $\displaystyle\max_MR_{\theta_0}<0$, then
$$\min_MR_{\theta_0}\leq\min_MR_\theta\leq\max_MR_{\theta}\leq \max_MR_{\theta_0}<0\mbox{ for all }t\geq 0$$
under the CR Yamabe flow $(\ref{A1})$.
\end{prop}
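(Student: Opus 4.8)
The plan is to adapt the maximum--principle arguments already carried out in the proofs of Proposition \ref{prop4}, Proposition \ref{prop5} and Proposition \ref{prop6} to the CR setting, exploiting the close parallel between the evolution equation \eqref{A4} for $R_\theta$ and the corresponding equation \eqref{4} in the Riemannian case. The key structural fact is that \eqref{A4} has exactly the same form as \eqref{B4}: the reaction term is $-R_\theta(\overline{R}_\theta-R_\theta)=R_\theta(R_\theta-\overline{R}_\theta)$, and the diffusion term $(n+1)\Delta_\theta R_\theta$ vanishes at an interior spatial maximum (where $\Delta_\theta R_\theta\leq 0$) or minimum (where $\Delta_\theta R_\theta\geq 0$). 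I would also use the stated fact that $t\mapsto\overline{R}_\theta$ is nonincreasing along the flow (Proposition 3.3 in \cite{Ho2}), which furnishes the contradiction analogous to the one at the end of the proof of Proposition \ref{prop4}.

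First I would prove $\max_M R_\theta<0$ is preserved. Following Proposition \ref{prop4}, set $F(x,t)=R_\theta-\tfrac12\max_M R_{\theta_0}$ and assume for contradiction that $F$ first reaches $0$ at some $(x_0,t_0)$ with $t_0>0$; there $R_\theta(x_0,t_0)=\max_x R_\theta(x,t_0)=\tfrac12\max_M R_{\theta_0}$, so $\Delta_\theta R_\theta\leq 0$ and $\partial_t F\geq 0$. Plugging into \eqref{A4} gives $0\leq R_\theta(R_\theta-\overline{R}_\theta)=\tfrac12\max_M R_{\theta_0}\bigl(\max_x R_\theta(x,t_0)-\overline{R}_\theta\bigr)$, which is nonpositive and is zero only if $R_{\theta(t_0)}$ is constant; but then $\overline{R}_{\theta(t_0)}=\tfrac12\max_M R_{\theta_0}>\max_M R_{\theta_0}\geq\overline{R}_{\theta_0}$, contradicting the monotonicity of the average. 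Next, to establish $\max_M R_\theta\leq\max_M R_{\theta_0}$ I would run the barrier argument of Proposition \ref{prop5}: for $\epsilon>0$ set $F(x,t)=R_\theta-\epsilon(t+1)$, assume $F$ first attains the value $\max_M R_{\theta_0}$ at an interior-maximum point, and derive $0\leq\partial_t F\leq -\epsilon+\max_x R_\theta(\max_x R_\theta-\overline{R}_\theta)\leq-\epsilon$, the last inequality holding because $\max_M R_\theta<0$ has already been shown; letting $\epsilon\to0$ gives the bound. The estimate $\min_M R_\theta\geq\min_M R_{\theta_0}$ is entirely symmetric, using $F(x,t)=R_\theta+\epsilon(t+1)$ at a spatial minimum (where $\Delta_\theta R_\theta\geq 0$) as in Proposition \ref{prop6}, and again invoking $R_\theta<0$.

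The only genuinely new ingredient, and the step I expect to require the most care, is the validity of the maximum principle itself for the subelliptic operator $\Delta_\theta$ rather than the Riemannian Laplacian. The arguments above all hinge on the sign of $\Delta_\theta R_\theta$ at an interior spatial extremum of $R_\theta(\cdot,t_0)$; since $\Delta_\theta$ is the sub-Laplacian, which is degenerate elliptic (elliptic only along the horizontal distribution), one must check that $R_\theta$ is smooth enough and that a Hopf-type argument still yields $\Delta_\theta R_\theta\leq 0$ at a horizontal maximum. This is standard for the CR Yamabe flow, and since the excerpt explicitly instructs us to ``apply the maximum principle to \eqref{A4}'' it may simply be cited; the remaining bookkeeping — verifying the initial-time strict inequalities and the continuity needed to pick a first contact time $t_0$ — is identical to the Riemannian proofs and can be imported verbatim with $g$ replaced by $\theta$ and the constants adjusted.
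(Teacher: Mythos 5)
Your proposal is correct and is exactly the argument the paper intends: the paper omits the proof of Proposition \ref{propA4}, stating that one applies the maximum principle to \eqref{A4} together with the monotonicity of $t\mapsto\overline{R}_\theta$ (Proposition 3.3 in \cite{Ho2}), precisely as in Propositions \ref{prop4}--\ref{prop6}, which is what you carry out. Your additional remark on the sign of $\Delta_\theta R_\theta$ at a spatial extremum for the degenerate-elliptic sub-Laplacian is a legitimate point of care and is handled correctly (it follows from the trace of the horizontal Hessian being nonpositive at a maximum), so no gap remains.
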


We will also omit the proof of the following lemma, which is to apply the
the maximum principle to \eqref{A4} and is the same as
the proof of Lemma \ref{thm1} and Lemma \ref{thm2}.

\begin{lem}\label{thm3}
If $\displaystyle\max_MR_{\theta_0}<0$, then
\begin{equation*}
R_{\theta(t)}\leq \overline{R}_{\theta_0}+\left(\max_MR_{\theta_0}-\min_MR_{\theta_0}\right)
+\left(\max_MR_{\theta_0}\right)\int_0^{t}\left(\max_MR_{\theta(s)}-\overline{R}_{\theta(s)}\right)ds.
\end{equation*}
and
\begin{equation*}
R_{\theta(t)}\geq\overline{R}_{\theta_0}-\left(\max_MR_{\theta_0}-\min_MR_{\theta_0}\right)
+\left(\max_MR_{\theta_0}\right)\int_0^{t}\left(\min_MR_{\theta(s)}-\overline{R}_{\theta(s)}\right)ds.
\end{equation*}
for all $t\geq 0$ under the CR Yamabe flow $(\ref{A1})$
\end{lem}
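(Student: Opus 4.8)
The plan is to prove Lemma \ref{thm3} exactly as the author proved Lemma \ref{thm1} and Lemma \ref{thm2} in the Riemannian case, since the CR Yamabe flow has the same structural features: the evolution equation \eqref{A4} for $R_\theta$ has the form $\partial_t R_\theta = (n+1)\Delta_\theta R_\theta + R_\theta(R_\theta - \overline{R}_\theta)$, which is the direct analogue of \eqref{4}, and by Proposition \ref{propA4} the sign condition $\max_M R_\theta < 0$ is preserved along the flow. For the first inequality I would introduce, for $\epsilon>0$, the auxiliary function
\begin{equation*}
F(x,t) = R_\theta(x,t) - \left(\max_M R_{\theta_0} - \min_M R_{\theta_0}\right) - \left(\max_M R_{\theta_0}\right)\int_0^t\left(\max_M R_{\theta(s)} - \overline{R}_{\theta(s)}\right)ds - \epsilon(t+1),
\end{equation*}
and claim $F < \overline{R}_{\theta_0}$ on $M\times[0,\infty)$. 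At $t=0$ one checks $F(x,0) - \overline{R}_{\theta_0} = R_{\theta_0} - \overline{R}_{\theta_0} - (\max_M R_{\theta_0} - \min_M R_{\theta_0}) - \epsilon \le -\epsilon < 0$, so the claim holds initially.

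Assuming for contradiction that $F$ reaches $\overline{R}_{\theta_0}$ first at some $(x_0,t_0)$ with $t_0 > 0$, at that point $F$ attains its spatial maximum, which forces $R_\theta(x_0,t_0) = \max_M R_{\theta(t_0)}$ and hence $\Delta_\theta R_\theta \le 0$ and $\partial_t F \ge 0$ there. Substituting \eqref{A4} and dropping the nonpositive Laplacian term gives
\begin{equation*}
0 \le \partial_t F = (n+1)\Delta_\theta R_\theta + R_\theta(R_\theta - \overline{R}_\theta) - \left(\max_M R_{\theta_0}\right)\left(\max_M R_{\theta(t_0)} - \overline{R}_{\theta(t_0)}\right) - \epsilon,
\end{equation*}
and after replacing $R_\theta(x_0,t_0)$ by $\max_M R_{\theta(t_0)}$ the two curvature-product terms combine into $\left(\max_M R_{\theta(t_0)} - \max_M R_{\theta_0}\right)\left(\max_M R_{\theta(t_0)} - \overline{R}_{\theta(t_0)}\right) - \epsilon$. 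By Proposition \ref{propA4} we have $\max_M R_{\theta(t_0)} \le \max_M R_{\theta_0} < 0$, so the first factor is $\le 0$; and since the maximum of $R_\theta$ dominates its average, the second factor is $\ge 0$. Thus the product is $\le 0$ and the whole right-hand side is $\le -\epsilon < 0$, contradicting $\partial_t F \ge 0$. This establishes the claim, and letting $\epsilon \to 0$ yields the first stated inequality.

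The second inequality is proved by the symmetric argument, exactly mirroring Lemma \ref{thm2}: one sets
\begin{equation*}
F(x,t) = R_\theta(x,t) + \left(\max_M R_{\theta_0} - \min_M R_{\theta_0}\right) - \left(\max_M R_{\theta_0}\right)\int_0^t\left(\min_M R_{\theta(s)} - \overline{R}_{\theta(s)}\right)ds + \epsilon(t+1),
\end{equation*}
claims $F > \overline{R}_{\theta_0}$, and derives a contradiction at a first spatial minimum, where now $R_\theta = \min_M R_{\theta(t_0)}$, $\Delta_\theta R_\theta \ge 0$, and $\partial_t F \le 0$; here the sign of $\left(\min_M R_{\theta(t_0)} - \overline{R}_{\theta(t_0)}\right) \le 0$ combined with $\max_M R_{\theta_0} < 0$ again forces the relevant product term to have the right sign, yielding $\partial_t F \ge \epsilon > 0$, a contradiction. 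I do not expect any genuine obstacle here, since the proof is a mechanical transcription of the Riemannian arguments; the only point requiring minor care is keeping track of the correct diffusion coefficient $(n+1)$ in \eqref{A4} (versus $(n-1)$ in the Riemannian case) and verifying that the monotonicity of $\overline{R}_\theta$ from Proposition 3.3 in \cite{Ho2} and the sign preservation from Proposition \ref{propA4} are invoked at precisely the steps where the analogous Riemannian facts were used, which is why the author chooses to omit the routine details.
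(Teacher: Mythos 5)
Your proposal is correct and is exactly the proof the paper intends: the paper explicitly omits the argument for Lemma \ref{thm3}, stating it is the maximum principle applied to \eqref{A4} in the same way as Lemmas \ref{thm1} and \ref{thm2}, and your writeup is precisely that transcription, with Proposition \ref{propA4} correctly playing the role of Proposition \ref{prop5} in the sign analysis of the product terms.
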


Given a contact form $\theta$, we let $\lambda_1(\theta)$ be the first nonzero eigenvalue of
the sub-Laplacian of $\theta$. We have the following:

\begin{theorem}\label{mainA}
Suppose $(M,\theta_0)$ is a compact strictly pseudoconvex manifold of real dimension $2n+1$ such that
$\displaystyle\max_MR_{\theta_0}< 0$,
and  $\theta_Y$ is the contact form conformal to $\theta_0$
such that its Webster scalar curvature is constant and
\begin{equation}\label{Assumption}
\int_MdV_{\theta_Y}=\int_MdV_{\theta_0}.
\end{equation}
Then we have
\begin{equation}\label{A20}
e^{-c}\lambda_1(\theta_Y)\leq\lambda_1(\theta_0)\leq
e^{c}\lambda_1(\theta_Y)
\end{equation}
where $\displaystyle c=2(2n+1)\left(\frac{\min_MR_{\theta_0}}{\max_MR_{\theta_0}}-1\right).$
\end{theorem}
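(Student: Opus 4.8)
The statement is the exact CR analogue of Theorem \ref{main}, so the plan is to transcribe that argument with the CR Yamabe flow replacing the Riemannian one. The key structural facts are already in place: Proposition \ref{prop6.2} and Proposition \ref{prop6.3} give the two-sided control on $\frac{d}{dt}\log\lambda_1$ in terms of $\max_M R_\theta - \overline{R}_\theta$ and $\min_M R_\theta - \overline{R}_\theta$, and Proposition \ref{propA4} together with Lemma \ref{thm3} gives the curvature pinching and the integral estimates needed to bound the time-integrals of those quantities. The first step is to invoke the known convergence of the CR Yamabe flow under the hypothesis $\max_M R_{\theta_0}<0$: the flow (\ref{A1}) converges to a contact form $\theta_\infty$ conformal to $\theta_0$ with constant negative Webster scalar curvature (this negative-curvature case of the CR Yamabe flow is exactly the setting where long-time existence and convergence are known, from the references \cite{Ho1,Ho2} cited earlier).

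**Identifying the limit with $\theta_Y$.** Next I would show $\theta_\infty = \theta_Y$, mirroring the computation (\ref{20a})--(\ref{20c}). First, the total volume is preserved along (\ref{A1}): differentiating $\int_M dV_\theta$ and using (\ref{A0}) and (\ref{A3}) gives $\frac{d}{dt}\int_M dV_\theta = -(n+1)\int_M (R_\theta-\overline{R}_\theta)\,dV_\theta = 0$, so $\int_M dV_{\theta_\infty} = \int_M dV_{\theta_0}$. Since $\theta_\infty$ and $\theta_Y$ are both conformal to $\theta_0$ with constant negative Webster scalar curvature, they differ by a constant conformal factor $\theta_\infty = c^{2/n}\theta_Y$; the uniqueness of constant-negative-Webster-curvature contact forms in a conformal class (the CR analogue of the Kazdan--Warner/Escobar uniqueness used in the Riemannian case) fixes this, and then comparing volumes with the normalization (\ref{Assumption}) forces $c=1$, so $\theta_\infty = \theta_Y$. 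The main place where I would want to be careful is quoting the correct CR uniqueness result for negative constant Webster curvature; this is the analogue of the Kazdan--Warner step, and I expect it is the one citation that must be located precisely rather than reused verbatim from Section 2.

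**Assembling the eigenvalue bound.** With $\theta_\infty=\theta_Y$ established, the final step integrates the differential inequalities. From Proposition \ref{propA4} one gets $\min_M R_{\theta_0}\le \overline{R}_{\theta(t)}\le \max_M R_{\theta_0}$ for all $t$, and feeding this into Lemma \ref{thm3} (exactly as (\ref{18})--(\ref{19a}) were derived) yields
\begin{equation*}
-2\left(1-\frac{\min_M R_{\theta_0}}{\max_M R_{\theta_0}}\right)\ge \int_0^\infty \left(\max_M R_{\theta(s)}-\overline{R}_{\theta(s)}\right)ds
\end{equation*}
and the companion bound with $\min_M R_\theta-\overline{R}_\theta$. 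Integrating the inequality of Proposition \ref{prop6.2} from $0$ to $\infty$ and substituting these gives
\begin{equation*}
\log\frac{\lambda_1(\theta_Y)}{\lambda_1(\theta_0)}\ge -n\!\int_0^\infty\!\!\left(\max_M R_\theta-\overline{R}_\theta\right)ds+(n+1)\!\int_0^\infty\!\!\left(\min_M R_\theta-\overline{R}_\theta\right)ds\ge 2(2n+1)\left(1-\frac{\min_M R_{\theta_0}}{\max_M R_{\theta_0}}\right),
\end{equation*}
which is the upper bound $\lambda_1(\theta_0)\le e^c\lambda_1(\theta_Y)$ with $c=2(2n+1)\big(\frac{\min_M R_{\theta_0}}{\max_M R_{\theta_0}}-1\big)$; note the coefficient $n+(n+1)=2n+1$ replaces the factor $n-1$ from the Riemannian case, which accounts for the form of the constant $c$. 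The lower bound comes identically from integrating Proposition \ref{prop6.3}. The only genuine subtlety, beyond bookkeeping, is that these derivatives are one-sided Dini derivatives (the $\liminf$/$\limsup$ in Propositions \ref{prop6.2} and \ref{prop6.3}), so the passage from the differential inequality to its integrated form must invoke the Lipschitz continuity of $t\mapsto\lambda_1(t)$ and the fact that a function with a finite lower Dini derivative bounded below by an integrable function satisfies the corresponding integral inequality; this is precisely the point flagged with the remark about \cite{Hagood&Thomson} in the proof of Theorem \ref{main}, and I would reuse that justification verbatim.
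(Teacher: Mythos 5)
Your proposal is correct and follows the paper's proof essentially step for step: convergence of the CR Yamabe flow to a constant-curvature limit, identification of that limit with $\theta_Y$ via a uniqueness theorem plus the volume normalization \eqref{Assumption}, and integration of Propositions \ref{prop6.2} and \ref{prop6.3} using the bounds from Proposition \ref{propA4} and Lemma \ref{thm3}, with the same Dini-derivative justification via \cite{Hagood&Thomson}. The only differences are bibliographic: the paper quotes the convergence from Zhang \cite{Zhang} rather than \cite{Ho1,Ho2}, and the uniqueness result you correctly flagged as needing to be located is Jerison--Lee's (Theorem 7.1 in \cite{Jerison&Lee3}, see also Theorem 1.3 in \cite{Ho1}).
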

\begin{proof}
It was proved by Zhang in \cite{Zhang} that $\theta\to \theta_\infty$ as $t\to\infty$
under the CR Yamabe flow (\ref{A1}) such that $\theta_\infty$ has constant Webster scalar curvature.
Along the CR Yamabe flow (\ref{A1}), we have
$$\frac{d}{dt}\left(\int_MdV_\theta\right)=\int_M\frac{\partial}{\partial t}(dV_\theta)=-(n+1)\int_M(R_\theta-\overline{R}_\theta)dV_\theta=0$$
by (\ref{A0}) and (\ref{A3}),
which implies that
$\displaystyle\int_MdV_\theta=\int_MdV_{\theta_0}$ for all $t\geq 0$.
In particular, we have
\begin{equation}\label{A20a}
\int_MdV_{\theta_\infty}=\int_MdV_{\theta_0}.
\end{equation}
On the other hand, note that  $R_{\theta_Y}=c^{\frac{2}{n}}R_{\theta_\infty}$
for some constant $c>0$. Indeed, one can take $c$ to be
$\displaystyle\left(R_{\theta_Y}/R_{\theta_\infty}\right)^{\frac{n}{2}}$.
This  implies that the metric
$c^{\frac{2}{n}}\theta_Y$ has scalar curvature being equal to
$$R_{c^{\frac{2}{n}}\theta_Y}=c^{-\frac{2}{n}}R_{\theta_Y}=R_{\theta_\infty}.$$
Hence, we can conclude that
\begin{equation}\label{A20b}
c^{\frac{2}{n}}\theta_Y=\theta_\infty
\end{equation}
using the result of Jerison and Lee (see Theorem 7.1 in \cite{Jerison&Lee3} and also Theorem 1.3 in \cite{Ho1}),
which says that
if $\theta_1$ and $\theta_2$ are two contact forms conformal to $\theta_0$ such that
their Webster scalar curvatures satisfy $R_{\theta_1}=R_{\theta_2}<0$, then
$\theta_1=\theta_2$. Therefore, by (\ref{A20a}) and (\ref{A20b}), we have
$$\int_MdV_{\theta_0}=\int_MdV_{\theta_\infty}=\int_MdV_{c^{\frac{2}{n}}\theta_Y}=c^{\frac{2n+2}{n}}\int_MdV_{\theta_Y}=
c^{\frac{2n+2}{n}}\int_MdV_{\theta_0}$$
where the last equality follows from (\ref{Assumption}).
This implies that $c=1$, or equivalently,
\begin{equation}\label{A20c}
\theta_Y=\theta_\infty.
\end{equation}

Note that
\begin{equation}\label{A20d}
\min_M R_{\theta_0}\leq\overline{R}_{\theta(t)}\leq\max_{M}R_{\theta_0} \mbox{ for all }t\geq 0
\end{equation}
by Proposition \ref{propA4}.
Therefore, it follows from (\ref{A20d}) and the first inequality of Lemma \ref{thm3} that
\begin{equation}\label{A18}
\begin{split}
&\left(\max_MR_{\theta_0}\right)\int_0^{t}\left(\max_MR_{\theta(s)}-\overline{R}_{\theta(s)}\right)ds\\
&\geq \left(\overline{R}_{\theta(t)}-\overline{R}_{\theta_0}\right)+
\left(\max_MR_{\theta(t)}-\overline{R}_{\theta(t)}\right)-\left(\max_MR_{\theta_0}-\min_MR_{\theta_0}\right)\\
&\geq\left(\max_MR_{\theta(t)}-\overline{R}_{\theta(t)}\right)-2\left(\max_MR_{\theta_0}-\min_MR_{\theta_0}\right),
\end{split}
\end{equation}
and it follows from (\ref{A20d}) and the second inequality of Lemma \ref{thm3} that
\begin{equation}\label{A18a}
\begin{split}
&-\left(\max_MR_{\theta_0}\right)\int_0^{t}\left(\min_MR_{\theta(s)}-\overline{R}_{\theta(s)}\right)ds\\
&\geq \left(\overline{R}_{\theta_0}-\overline{R}_{\theta(t)}\right)+\left(\overline{R}_{\theta(t)}-\min_MR_{\theta(t)}\right)-\left(\max_MR_{\theta_0}-\min_MR_{\theta_0}\right)\\
&\geq
\left(\overline{R}_{\theta(t)}-\min_MR_{\theta(t)}\right)-2\left(\max_MR_{\theta_0}-\min_MR_{\theta_0}\right).
\end{split}
\end{equation}
Since  $\theta(t)\to \theta_\infty$ as $t\to\infty$
by Zhang's result stated above,
by letting $t$ go to infinity,
we obtain from (\ref{A18}) and (\ref{A18a}) respectively that
\begin{equation}\label{A19}
-2\left(1-\frac{\min_MR_{\theta_0}}{\max_MR_{\theta_0}}\right)\geq \int_0^{\infty}\left(\max_MR_{\theta(s)}-\overline{R}_{\theta(s)}\right)ds
\end{equation}
and
\begin{equation}\label{A19a}
-2\left(1-\frac{\min_MR_{\theta_0}}{\max_MR_{\theta_0}}\right)\geq -\int_0^{\infty}\left(\min_MR_{\theta(s)}-\overline{R}_{\theta(s)}\right)ds.
\end{equation}
Integrating the inequality in Proposition \ref{prop6.3}
and using (\ref{A20c}), (\ref{A19}) and (\ref{A19a}), we get
\begin{equation*}
\begin{split}
\log\frac{\lambda_1(\theta_Y)}{\lambda_1(\theta_0)}&=
\log\frac{\lambda_1(\theta_\infty)}{\lambda_1(\theta_0)}\\
&\leq -n\int_0^\infty\Big(\min_MR_{\theta(s)}-\overline{R}_{\theta(s)}\Big)ds
+(n+1)\int_0^\infty\Big(\max_MR_{\theta(s)}-\overline{R}_{\theta(s)}\Big)ds\\
&\leq
-2(2n+1)\left(1-\frac{\min_MR_{\theta_0}}{\max_MR_{\theta_0}}\right),
\end{split}
\end{equation*}
which gives the lower bound for $\lambda_1(\theta_0)$ in (\ref{A20}).
On the other hand, integrating the  inequality in Proposition \ref{prop6.2}
and using (\ref{A20c}), (\ref{A19}) and (\ref{A19a}), we get
\begin{equation*}
\begin{split}
\log\frac{\lambda_1(\theta_Y)}{\lambda_1(\theta_0)}&=
\log\frac{\lambda_1(\theta_\infty)}{\lambda_1(\theta_0)}\\
&\geq -n\int_0^\infty\Big(\max_MR_{\theta(s)}-\overline{R}_{\theta(s)}\Big)ds
+(n+1)\int_0^\infty\Big(\min_MR_{\theta(s)}-\overline{R}_{\theta(s)}\Big)ds\\
&\geq 2(2n+1)\left(1-\frac{\min_MR_{\theta_0}}{\max_MR_{\theta_0}}\right),
\end{split}
\end{equation*}
which gives the upper bound for $\lambda_1(\theta_0)$ in (\ref{A20}). This proves the assertion.
\end{proof}

\section{The unnormalized CR Yamabe flow}

In this section, we consider the unnormalized CR Yamabe flow:
\begin{equation}\label{MA1}
\frac{\partial}{\partial t}\theta=-R_\theta\,\theta\mbox{ for }t\geq 0,\hspace{2mm}\theta|_{t=0}=\theta_0.
\end{equation}
If we write $\theta=u^{\frac{2}{n}}\theta_0$, then
\begin{equation}\label{MA2}
\frac{\partial u}{\partial t}=-\frac{n}{2}R_\theta u\mbox{ for }t\geq 0,\hspace{2mm}u|_{t=0}=1.
\end{equation}
Hence, the volume form $dV_\theta$ and the Webster scalar curvature $R_\theta$ of $\theta$ satisfy (see (6.3) in \cite{Ho1})
\begin{eqnarray}\label{MA3}
&&\frac{\partial}{\partial t}(dV_\theta)=-(n+1)R_\theta\,dV_\theta,\\
\label{MA4}
&&\frac{\partial}{\partial t}R_\theta=(n+1)\Delta_\theta R_\theta+R_\theta^2.
\end{eqnarray}

Let $\lambda_1$ be the first eigenvalue of $-\Delta_\theta+aR_\theta$ where $a$ is a constant, i.e.
\begin{equation}\label{MA5}
-\Delta_\theta f+aR_\theta f=\lambda_1 f
\end{equation}
for some function $f$.
Again we assume that
there is a family of the first eigenvalue and the corresponding eigenfunction which is $C^1$ in $t$.
By rescaling, we may assume that the eigenfunction $f$ satisfies
\begin{equation}\label{MA6}
\int_Mf^2dV_\theta=1.
\end{equation}

\begin{prop}\label{propA1}
Along the unnormalized CR Yamabe flow \eqref{A1}, we have
\begin{equation*}
\frac{d\lambda_1}{dt}=\left(2(n+1)a-n\right)\int_MR_\theta(|\nabla_{\theta} f|^2_{\theta}+aR_\theta f^2)dV_{\theta}
-(n+1)(2a-1)\lambda_1\int_MR_\theta f^2dV_\theta.
\end{equation*}
\end{prop}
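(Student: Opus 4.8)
The plan is to mirror the proof of Proposition \ref{propC1}, replacing the Riemannian evolution equations by their CR counterparts \eqref{MA2}--\eqref{MA4} and the conformal weight $u^{4/(n-2)}$ by $u^{2/n}$. First I would differentiate the normalization \eqref{MA6} in $t$, using \eqref{MA3} for $\frac{\partial}{\partial t}(dV_\theta)$, to obtain the CR analogue of \eqref{C7}, namely
\begin{equation*}
\int_M f\frac{\partial f}{\partial t}\,dV_\theta=\frac{n+1}{2}\int_M R_\theta f^2\,dV_\theta.
\end{equation*}
Next, multiplying the eigenvalue equation \eqref{MA5} by $f$ and integrating (with no boundary term, since $M$ is closed) gives $\lambda_1=\int_M(|\nabla_\theta f|^2_\theta+aR_\theta f^2)\,dV_\theta$.

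The key algebraic coincidence is the same one driving Proposition \ref{propC1}: since $dV_\theta=u^{(2n+2)/n}dV_{\theta_0}$ and $|\nabla_\theta f|^2_\theta=u^{-2/n}|\nabla_{\theta_0}f|^2_{\theta_0}$ by \eqref{A9}, the product satisfies $|\nabla_\theta f|^2_\theta\,dV_\theta=u^2|\nabla_{\theta_0}f|^2_{\theta_0}\,dV_{\theta_0}$, exactly as in the Riemannian case. Hence I would write $\lambda_1=\int_M u^2|\nabla_{\theta_0}f|^2_{\theta_0}\,dV_{\theta_0}+a\int_M R_\theta f^2\,dV_\theta$ and differentiate in $t$. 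Substituting $\frac{\partial u}{\partial t}=-\frac{n}{2}R_\theta u$ from \eqref{MA2} turns $2u\frac{\partial u}{\partial t}|\nabla_{\theta_0}f|^2_{\theta_0}\,dV_{\theta_0}$ into $-nR_\theta|\nabla_\theta f|^2_\theta\,dV_\theta$, while \eqref{MA3} and \eqref{MA4} handle the terms coming from $\frac{\partial}{\partial t}(aR_\theta f^2\,dV_\theta)$.

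After this differentiation I would integrate by parts, using that the sub-Laplacian is self-adjoint on the closed manifold $M$, so that $\int_M\phi\,\Delta_\theta\psi\,dV_\theta=\int_M\psi\,\Delta_\theta\phi\,dV_\theta$ with no boundary contributions. As in Proposition \ref{propC1}, the term $(n+1)a\int_M f^2\Delta_\theta R_\theta\,dV_\theta$ becomes $(n+1)a\int_M R_\theta\Delta_\theta(f^2)\,dV_\theta$; expanding $\Delta_\theta(f^2)=2f\Delta_\theta f+2|\nabla_\theta f|^2_\theta$ and invoking the eigenvalue equation $-\Delta_\theta f+aR_\theta f=\lambda_1 f$ produces the gradient term with coefficient $2(n+1)a-n$ together with an $R_\theta^2 f^2$ term of coefficient $2(n+1)a^2-na=a\big(2(n+1)a-n\big)$, which combine into $\big(2(n+1)a-n\big)\int_M R_\theta(|\nabla_\theta f|^2_\theta+aR_\theta f^2)\,dV_\theta$. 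Finally, feeding in the first displayed identity to replace $2\lambda_1\int_M f\frac{\partial f}{\partial t}\,dV_\theta$ by $(n+1)\lambda_1\int_M R_\theta f^2\,dV_\theta$ and collecting all $\lambda_1 R_\theta f^2$ contributions yields the coefficient $-(n+1)(2a-1)$, giving the stated formula.

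There is no genuine obstacle here; the computation is a careful translation of Proposition \ref{propC1} with the CR-specific constants. The only point requiring attention is the bookkeeping of numerical coefficients, which differ from the Riemannian case precisely because the conformal weight is $u^{2/n}$ and the coefficients in \eqref{MA2}--\eqref{MA4} are $\frac{n}{2}$, $n+1$, and $n+1$ rather than $\frac{n-2}{4}$, $\frac{n}{2}$, and $n-1$. One must also confirm that the self-adjoint integration by parts for $\Delta_\theta$ introduces no boundary terms, which holds since $M$ has no boundary.
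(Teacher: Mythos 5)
Your proposal is correct and follows essentially the same route as the paper's own proof: differentiating $\lambda_1=\int_M u^2|\nabla_{\theta_0}f|^2_{\theta_0}\,dV_{\theta_0}+a\int_M R_\theta f^2\,dV_\theta$, substituting \eqref{MA2}--\eqref{MA4}, integrating by parts on the closed manifold, and using the eigenvalue equation together with the differentiated normalization \eqref{MA7}. All the coefficient bookkeeping you describe (the $2(n+1)a-n$ gradient coefficient, the $a\bigl(2(n+1)a-n\bigr)$ term for $R_\theta^2f^2$, and the $-(n+1)(2a-1)$ collection) matches the paper's computation exactly.
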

\begin{proof}
Differentiate (\ref{MA6}) with respect to $t$, we have
\begin{equation}\label{MA7}
\int_Mf\frac{\partial f}{\partial t}dV_\theta=\frac{n+1}{2}\int_MR_\theta f^2dV_\theta
\end{equation}
by (\ref{MA3}).
Multiply (\ref{MA5}) by $f$ and integrate it over $M$, we obtain
\begin{equation}\label{MA8}
\lambda_1=\int_M(|\nabla_\theta f|^2_\theta+aR_\theta f^2)dV_\theta
\end{equation}
by (\ref{MA6}) and integration by parts. Since $\theta=u^{\frac{2}{n}}\theta_0$,
\eqref{A9} holds.
Combining \eqref{A9} and (\ref{MA8}), we have
$$\lambda_1=\int_Mu^2|\nabla_{\theta_0} f|^2_{\theta_0}dV_{\theta_0}+a\int_MR_\theta f^2dV_\theta.$$
Differentiate it with respect to $t$, we obtain
\begin{equation*}
\begin{split}
\frac{d\lambda_1}{dt}&=\int_M2u\frac{\partial u}{\partial t}|\nabla_{\theta_0} f|^2_{\theta_0}dV_{\theta_0}
+2\int_Mu^2\langle\nabla_{\theta_0} f,\nabla_{\theta_0}(\frac{\partial f}{\partial t})\rangle_{\theta_0}dV_{\theta_0}\\
&\hspace{4mm}+a\int_Mf^2\frac{\partial R_\theta}{\partial t}dV_\theta+2a\int_MR_\theta f\frac{\partial f}{\partial t}dV_\theta
+a\int_MR_\theta f^2\frac{\partial}{\partial t}(dV_\theta)\\
&=-n\int_MR_\theta|\nabla_{g} f|^2_{\theta}dV_{\theta}
+2\int_M\langle\nabla_{\theta} f,\nabla_{\theta}(\frac{\partial f}{\partial t})\rangle_{\theta}dV_{\theta}\\
&\hspace{4mm}+a\int_Mf^2\big((n+1)\Delta_\theta R_\theta+R_\theta^2\big)dV_\theta+2a\int_MR_\theta f\frac{\partial f}{\partial t}dV_\theta
-(n+1)a\int_MR_\theta^2f^2dV_\theta\\
&=-n\int_MR_\theta|\nabla_{\theta} f|^2_{\theta}dV_{\theta}
+2\int_M\frac{\partial f}{\partial t}\big(-\Delta_\theta f+aR_\theta f\big)dV_\theta\\
&\hspace{4mm}+(n+1)a\int_MR_\theta \Delta_\theta(f^2)dV_\theta
-na\int_MR_\theta^2f^2dV_\theta\\
&=\left(2(n+1)a-n\right)\int_MR_\theta|\nabla_{\theta} f|^2_{g}dV_{\theta}
+2\lambda_1\int_Mf\frac{\partial f}{\partial t}dV_\theta\\
&\hspace{4mm}-2(n+1)a\int_MR_\theta f\big(-\Delta_\theta f+aR_\theta f\big)dV_\theta
+\left(2(n+1)a^2-na\right)\int_MR_\theta^2f^2dV_\theta\\
&=\left(2(n+1)a-n\right)\int_MR_\theta(|\nabla_{\theta} f|^2_{\theta}+aR_\theta f^2)dV_{\theta}
-(n+1)(2a-1)\lambda_1\int_MR_\theta f^2dV_\theta
\end{split}
\end{equation*}
where the second equality follows from (\ref{A9}) and (\ref{MA2})-(\ref{MA4}),
the third equality follows from integration by parts, and the last two equalities follow from (\ref{MA5}) and (\ref{MA7}).
This proves the assertion.
\end{proof}

Now, by the same proof of Proposition \ref{propC2}-\ref{propC3}, we have the following:

\begin{prop}\label{MpropA4}
Along the unnormalized CR Yamabe flow \eqref{MA1},
\\
(i)
if $0\leq a\leq\displaystyle\frac{n}{2n+2}$ and
$\displaystyle\min_MR_\theta\geq\frac{n}{n+1}\max_MR_\theta\geq 0$, then
$\displaystyle\frac{d\lambda_1}{dt}\geq 0$, and equality holds if and only if $R_\theta$ is constant;\\
(ii) if $\displaystyle \frac{n}{2n+2}\leq a\leq \frac{1}{2}$ and
$\displaystyle\min_MR_\theta\geq 0$, then
$\displaystyle\frac{d\lambda_1}{dt}\geq 0$
and equality holds if and only if $R_\theta\equiv 0$; and\\
(iii) if $a\geq\displaystyle\frac{1}{2}$ and
$\displaystyle\min_MR_\theta\geq 0$, then
$\displaystyle\frac{d\lambda_1}{dt}\geq 0$
and equality holds if and only if $R_\theta\equiv 0$.
\end{prop}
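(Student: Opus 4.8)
The plan is to read off everything from the evolution identity in Proposition \ref{propA1}, exactly as in the proofs of Proposition \ref{propC2}--\ref{propC3}. Writing
\begin{equation*}
\frac{d\lambda_1}{dt}=\big(2(n+1)a-n\big)\int_MR_\theta\big(|\nabla_{\theta} f|^2_{\theta}+aR_\theta f^2\big)dV_{\theta}-(n+1)(2a-1)\lambda_1\int_MR_\theta f^2dV_\theta,
\end{equation*}
the two coefficients $2(n+1)a-n$ and $(n+1)(2a-1)$ change sign at $a=\frac{n}{2n+2}$ and at $a=\frac12$, and these are precisely the thresholds separating the regimes (i)--(iii). In every case the hypothesis $\min_MR_\theta\ge 0$ forces $R_\theta\ge 0$, so that $|\nabla_\theta f|^2_\theta+aR_\theta f^2\ge 0$ and $R_\theta f^2\ge 0$; this is what allows me to replace $R_\theta$ by either $\min_MR_\theta$ or $\max_MR_\theta$ inside each integral, the choice being dictated by the sign of the corresponding coefficient. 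Throughout I would use \eqref{MA8} to identify $\int_M(|\nabla_\theta f|^2_\theta+aR_\theta f^2)dV_\theta=\lambda_1$ and \eqref{MA6} to identify $\int_Mf^2dV_\theta=1$, so that after each substitution a common factor of $\lambda_1$ can be extracted.

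For regime (i), $0\le a\le\frac{n}{2n+2}$, the first coefficient is $\le 0$ while the factor $(n+1)(1-2a)$ in front of the second integral is $\ge 0$. I would therefore bound the first integral \emph{above} by $(\max_MR_\theta)\lambda_1$ and the second \emph{below} by $\min_MR_\theta$, obtaining
\begin{equation*}
\frac{d\lambda_1}{dt}\ge\lambda_1\Big[\big(2(n+1)a-n\big)\max_MR_\theta+(n+1)(1-2a)\min_MR_\theta\Big]\ge\lambda_1\big((n+1)\min_MR_\theta-n\max_MR_\theta\big),
\end{equation*}
the last step dropping the nonnegative term $2(n+1)a(\max_MR_\theta-\min_MR_\theta)$; the hypothesis $\min_MR_\theta\ge\frac{n}{n+1}\max_MR_\theta$ then gives $\frac{d\lambda_1}{dt}\ge 0$. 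For regime (ii), $\frac{n}{2n+2}\le a\le\frac12$, \emph{both} effective coefficients are $\ge 0$, so I would bound \emph{both} integrals below by $\min_MR_\theta$; since $\big(2(n+1)a-n\big)+(n+1)(1-2a)=1$, the two contributions collapse to exactly $\lambda_1\min_MR_\theta\ge 0$, which is the cleanest of the three.

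Regime (iii), $a\ge\frac12$, is where I expect the real difficulty. The first coefficient is still $\ge 0$, but now the factor $(n+1)(2a-1)$ multiplying the second integral is also $\ge 0$, so the two terms pull in opposite directions: I would bound the first integral below by $(\min_MR_\theta)\lambda_1$ and, because of the sign, the second integral \emph{above} by $\max_MR_\theta$, arriving at
\begin{equation*}
\frac{d\lambda_1}{dt}\ge\lambda_1\Big[\big(2(n+1)a-n\big)\min_MR_\theta-(n+1)(2a-1)\max_MR_\theta\Big].
\end{equation*}
The main obstacle is to show that this bracket is nonnegative: unlike (i)--(ii), the two curvature extrema enter with opposite signs and the coefficient of $\max_MR_\theta$ grows linearly in $a$, so controlling the competition between $\min_MR_\theta$ and $\max_MR_\theta$ — using $\min_MR_\theta\ge 0$ together with the relation $\big(2(n+1)a-n\big)-(n+1)(2a-1)=1$ — is the delicate point, handled as in the proof of Proposition \ref{propC3}. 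Finally, the equality characterizations in (i)--(iii) would follow by tracing back the saturation conditions of the pointwise substitutions $R_\theta=\min_MR_\theta$ and $R_\theta=\max_MR_\theta$ on the supports of $f$ and $\nabla_\theta f$.
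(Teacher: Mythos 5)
Your treatment of regimes (i) and (ii) is correct and is essentially the paper's own argument (Propositions \ref{propC2} and \ref{propC4} applied verbatim to the identity of Proposition \ref{propA1}); in fact you are more careful than the paper, since you note explicitly that the discarded term $2(n+1)a\left(\max_MR_\theta-\min_MR_\theta\right)$ is nonnegative, whereas at the corresponding step the paper writes an ``equality'' that is really an inequality in the harmless direction.

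Regime (iii), however, contains a genuine gap, exactly at the point you flagged, and it cannot be ``handled as in the proof of Proposition \ref{propC3}'' because the paper's proof of Proposition \ref{propC3} is itself broken at that very step. Writing $A=2(n+1)a-n$ and $B=(n+1)(2a-1)$, the relation $A-B=1$ gives
\begin{equation*}
A\min_MR_\theta-B\max_MR_\theta=\min_MR_\theta-B\left(\max_MR_\theta-\min_MR_\theta\right),
\end{equation*}
which is \emph{negative} whenever $B\left(\max_MR_\theta-\min_MR_\theta\right)>\min_MR_\theta$, for instance if $\min_MR_\theta=0$, $\max_MR_\theta>0$ and $a>\frac12$. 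So the min/max substitution scheme that closes (i) and (ii) cannot prove (iii), and your displayed lower bound is of no use there. The paper masks this: the final display in the proof of Proposition \ref{propC3} asserts
\begin{equation*}
\left(2(n-1)a-\tfrac{n-2}{2}\right)\min_MR_g-\left(2(n-1)a-\tfrac{n}{2}\right)\max_MR_g=\tfrac{n}{2}\max_MR_g-\tfrac{n-2}{2}\min_MR_g,
\end{equation*}
which silently drops the term $2(n-1)a\left(\min_MR_g-\max_MR_g\right)\leq 0$; that is an inequality in the \emph{wrong} direction, so the claimed conclusion does not follow from it. In short, above the second threshold neither your argument nor the paper's establishes monotonicity, and a proof of (iii) would require a new idea: for example one can rewrite Proposition \ref{propA1} as $\frac{d\lambda_1}{dt}=\int_MR_\theta\left(|\nabla_\theta f|^2_\theta+aR_\theta f^2\right)dV_\theta+\frac{B}{2}\int_Mf^2\Delta_\theta R_\theta\,dV_\theta$, but the term involving $\Delta_\theta R_\theta$ has no sign. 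Separately, your one-sentence appeal to ``saturation conditions'' does not prove the stated ``if and only if'' clauses; note in fact that if $R_\theta$ is a positive constant, Proposition \ref{propA1} yields $\frac{d\lambda_1}{dt}=R_\theta\lambda_1>0$ whenever $\lambda_1>0$, so the ``if'' direction of the equality statement in (i) is itself questionable as written.
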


Proposition \ref{MpropA4} implies the following:

\begin{theorem}\label{Mmain2}
Along the unnormalized CR Yamabe flow \eqref{MA1},
the first eigenvalue of $-\Delta_\theta+aR_\theta$ is nondecreasing\\
(i)
if $0\leq a<\displaystyle\frac{n}{2n+2}$ and
$\displaystyle\min_MR_\theta\geq\frac{n}{n+1}\max_MR_\theta\geq 0$;\\
(ii) if $a\geq\displaystyle\frac{n}{2n+2}$ and
$\displaystyle\min_MR_\theta\geq 0$.
\end{theorem}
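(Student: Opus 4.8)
The plan is to reduce the monotonicity statement to a pointwise differential inequality and then read it off Proposition \ref{MpropA4}. Since the setup preceding \eqref{MA5} furnishes, via eigenvalue perturbation theory, a family consisting of the first eigenvalue $\lambda_1(t)$ together with a corresponding eigenfunction $f$ that is $C^1$ in $t$ and normalized by \eqref{MA6}, saying that $\lambda_1$ is nondecreasing along \eqref{MA1} is equivalent to $\frac{d\lambda_1}{dt}\geq0$ at every $t$. Proposition \ref{MpropA4} supplies exactly this inequality in three overlapping ranges of $a$, and I would simply check that these ranges tile the two ranges in the theorem. Part (i) is immediate, because its hypothesis $0\leq a<\frac{n}{2n+2}$ with $\min_MR_\theta\geq\frac{n}{n+1}\max_MR_\theta\geq0$ is contained in case (i) of Proposition \ref{MpropA4}. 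For part (ii), the union $[\frac{n}{2n+2},\frac12]\cup[\frac12,\infty)=[\frac{n}{2n+2},\infty)$ shows that every $a\geq\frac{n}{2n+2}$ lands in case (ii) or case (iii) of Proposition \ref{MpropA4}, and both of these require only $\min_MR_\theta\geq0$; hence part (ii) follows.

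For completeness I would indicate where the inequality $\frac{d\lambda_1}{dt}\geq0$ itself originates. The engine is the evolution formula of Proposition \ref{propA1}, which writes $\frac{d\lambda_1}{dt}$ as a combination of $\int_MR_\theta(|\nabla_\theta f|^2_\theta+aR_\theta f^2)\,dV_\theta$ and $\lambda_1\int_MR_\theta f^2\,dV_\theta$ with coefficients $C_1=2(n+1)a-n$ and $C_2=(n+1)(2a-1)$. One checks that $C_1-C_2=1$, so $C_1$ changes sign at $a=\frac{n}{2n+2}$ and $C_2$ changes sign at $a=\frac12$; these are precisely the two thresholds separating the three regimes. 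Using the eigenvalue identity $\int_M(|\nabla_\theta f|^2_\theta+aR_\theta f^2)\,dV_\theta=\lambda_1$ from \eqref{MA8}, the normalization $\int_Mf^2\,dV_\theta=1$, and the pointwise bounds $\min_MR_\theta\leq R_\theta\leq\max_MR_\theta$ (together with $|\nabla_\theta f|^2_\theta+aR_\theta f^2\geq0$ when $a\geq0$ and $R_\theta\geq0$), one replaces $R_\theta$ by whichever extremum matches the sign of each coefficient and reduces $\frac{d\lambda_1}{dt}$ to $\lambda_1$ times an explicit affine combination of $\min_MR_\theta$ and $\max_MR_\theta$. This is the CR analogue of the proofs of Proposition \ref{propC2}--\ref{propC3}, now based on the CR evolution equations \eqref{MA2}--\eqref{MA4}.

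The step I expect to require the most care is the small-parameter regime $0\leq a\leq\frac{n}{2n+2}$, where $C_1$ and $C_2$ are both nonpositive: there the resulting combination is $\lambda_1$ times a positive multiple of $(n+1)\min_MR_\theta-n\max_MR_\theta$, which is nonnegative precisely under the pinching hypothesis $\min_MR_\theta\geq\frac{n}{n+1}\max_MR_\theta$, and identifying this sharp constant is the crux of case (i). By contrast, the intermediate regime $\frac{n}{2n+2}\leq a\leq\frac12$, though its two coefficients have opposite signs, collapses cleanly: both terms are bounded below using $\min_MR_\theta$ and combine through $C_1-C_2=1$ into $\frac{d\lambda_1}{dt}\geq\lambda_1\min_MR_\theta\geq0$. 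This is also why the threshold $a=\frac{n}{2n+2}$ is absorbed into part (ii): at that value the weaker hypothesis $\min_MR_\theta\geq0$ already suffices, so part (i) needs only the strict inequality $a<\frac{n}{2n+2}$.
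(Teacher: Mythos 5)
Your proposal is correct and takes essentially the same route as the paper: the paper deduces Theorem \ref{Mmain2} directly from Proposition \ref{MpropA4}, whose three ranges of $a$ tile the theorem's two ranges exactly as you describe, and Proposition \ref{MpropA4} is in turn obtained from the evolution formula of Proposition \ref{propA1} by the same sign analysis of the coefficients $2(n+1)a-n$ and $(n+1)(2a-1)$ that you outline, mirroring Propositions \ref{propC2}--\ref{propC3}. The only quibble is in your small-$a$ regime: the lower bound is $\lambda_1\left((1-2a)\left[(n+1)\min_M R_\theta-n\max_M R_\theta\right]+2a\max_M R_\theta\right)$ rather than $\lambda_1$ times a pure positive multiple of $(n+1)\min_M R_\theta-n\max_M R_\theta$, which is harmless since both terms are nonnegative under the pinching hypothesis.
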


Note that Theorem \ref{Mmain2} was obtained in \cite{Ho2} for the cases when $a=0$ and $a=\displaystyle \frac{n}{2n+2}$.
See Theorem 1.4 and Theorem 1.5 in \cite{Ho2}. Note also that Chang-Lin-Wu \cite{Chang&Lin&Wu}
has obtained a result similar to Theorem \ref{Mmain2} for the case when $n=1$.

\bibliographystyle{amsplain}

\end{document}